%---------------------------------------------
% Author :  
%           Martina Juhnke-Kubitzke, University of Osnabrueck
%           Uwe Nagel, University of Kentucky
%---------------------------------------------
\documentclass[12pt]{amsart}
\usepackage{amssymb}

\usepackage{stmaryrd}
\usepackage{mathabx}
\usepackage{mathptmx}
\usepackage{cleveref}
\usepackage[all]{xy}
\usepackage{color}
\usepackage{tikz}
\usepackage{todonotes}
\usepackage{paralist}
\numberwithin{equation}{section}

\setlength{\marginparwidth}{60 pt}

%------------------------------------------------
%
%             Symbols in "Fraktur"
%
%------------------------------------------------
\newcommand{\mm}{\mathfrak m}
%------------------------------------------------
%
%            Symbols in "mathbb"
%
%------------------------------------------------

\newcommand{\Z}{\mathbb{Z}}
\newcommand{\K}{\mathbb{K}}
\newcommand{\RR}{\mathbb{R}}

\newcommand{\N}{\mathbb{N}}

%------------------------------------------------
%
%            Symbols in "mathcal"
%
%------------------------------------------------

%------------------------------------------------
%
%           Small letters in bold
%
%------------------------------------------------

\newcommand{\eb}{{\bf e}}

\newcommand{\mb}{{\bf m}}

%\newcommand{\be}{\mathbf{e}}

%------------------------------------------------
%
% Special Names
%
%------------------------------------------------

\DeclareMathOperator{\chara}{char}

\DeclareMathOperator{\cs}{c-sqfree}
\DeclareMathOperator{\cSupp}{cSupp}

\DeclareMathOperator{\gin}{gin}

\DeclareMathOperator{\Hilb}{Hilb}

\DeclareMathOperator{\ini}{in}

\DeclareMathOperator{\lk}{lk}
\DeclareMathOperator{\Mon}{Mon}

\DeclareMathOperator{\sm}{sm}

\DeclareMathOperator{\Tor}{Tor}

\DeclareMathOperator{\pnt}{\raise 0.5mm \hbox{\large\bf.}}

%------------------------------------------------
%
%          Greek letters 
%
%------------------------------------------------

\newcommand{\bal}{\mathrm{bal}}
\newcommand{\mf}{\mathfrak{m}}

\let\phi=\varphi

%------------------------------------------------
%
% Proof Name aendern
%
%------------------------------------------------

\newtheorem{theorem}{Theorem}[section]
\newtheorem{lemma}[theorem]{Lemma}
\newtheorem{proposition}[theorem]{Proposition}
\newtheorem{corollary}[theorem]{Corollary}

\newtheorem{lem-def}[theorem]{Lemma and Definition}
\newtheorem{prop-def}[theorem]{Proposition and Definition}

\theoremstyle{definition}
\newtheorem{definition}[theorem]{Definition} % \theoremstyle{remark}
\newtheorem{remark}[theorem]{Remark}
\newtheorem{rem-def}[theorem]{Remark and Definition}
\newtheorem{example}[theorem]{Example}

%------------------------------------------------
%
%           We print on A4 paper
%
%------------------------------------------------
\textwidth=16 cm
\textheight=24 cm
\topmargin=-1.5 cm
\oddsidemargin=0.5 cm
\evensidemargin=0.5 cm
\footskip=40 pt

%------------------------------------------------
%
%      END OF MACROS
%
%------------------------------------------------

\title{Balanced squeezed Complexes}

\author{Martina Juhnke-Kubitzke}
\address{Universit\"at Osnabr\"uck, Institut f\"ur Mathematik, 49069 Osnabr\"uck, Germany}
\email{juhnke-kubitzke@uos.de}

\author[Uwe Nagel]{Uwe Nagel}
\address{Department of Mathematics, University of Kentucky, 715 Patterson Office Tower, Lexington, KY 40506-0027, USA}
\email{uwe.nagel@uky.edu}

\subjclass[2010]{05E40,13F55} 
\begin{document}

\begin{abstract} 
Given any order ideal $U$ consisting of color-squarefree monomials involving variables with $d$ colors, we associate to it a balanced $(d-1)$-dimensional  simplicial complex $\Delta_{\bal}(U)$ that we call a balanced squeezed complex. In fact, these complexes have properties similar to squeezed balls as introduced by Kalai and the more general squeezed complexes of \cite{JK:Nagel-Squeezed}. We show that any balanced squeezed complex is vertex-decomposable and that its flag $h$-vector can be read off from the underlying order ideal. Moreover, we describe explicitly its Stanley-Reisner ideal $I_{\Delta_\bal(U)}$. If $U$ is also shifted, we determine the multigraded generic initial ideal of $I_{\Delta_\bal(U)}$ and establish that the balanced squeezed complex $\Delta_\bal(U)$ has the same graded Betti numbers as the complex obtained from color-shifting it. 
      We also introduce a class of color-squarefree monomial ideals that may be viewed as a generalization of the classical squarefree stable monomial ideals and show that their graded Betti numbers can be read off from their minimal generators. Moreover, we develop some tools for computing graded Betti numbers.   
\end{abstract}

\date{July 1, 2020}

\thanks{The first author was partially supported by the German Research Council DFG GRK-1916. The second author was partially supported by Simons Foundation grants \#317096 and \#636513. }

\maketitle

%%%%%%%%%%%%%%%%%%%%%%%%%%%%%%

%\tableofcontents

\section{Introduction}
\label{sect_intro}

In \cite{Kalai}, Kalai introduced a large class of triangulated balls, called \emph{squeezed balls}, with remarkable properties. In \cite{JK:Nagel-Squeezed}, the authors extended Kalai's construction by associating to any shifted order ideal a family of simplical complexes, called \emph{squeezed complexes},  with similar properties. Here we introduce a construction that produces balanced simplicial complexes that still have many of these properties. A balanced complex, as originally introduced by Stanley \cite{Sta79}, is a simplicial complex whose $1$-skeleton admits a ``minimal'' coloring. Examples of balanced complexes include barycentric subdivisions of regular CW-complexes, Coxeter complexes, and Tits buildings. In recent years, they have been studied extensively and many results, in particular from the area of classical face enumeration, have been shown to possess a balanced analogue (see e.g., \cite{GKN,IKN,JKM,JMNS,JV}). 

Given an integer $d >0$ and a vector $\mb=(m_1,\ldots,m_d)$ of non-negative integers, let $P(d,\mb)$ be a polynomial ring with variables $x_{1,1},\ldots x_{1,m_1},\ldots,x_{d,1},\ldots,x_{d,m_d}$ over a field $\K$. We say that a variable $x_{i, j}$ has color $i$. A squarefree monomial $U \in P(d,\mb)$ is said to be \emph{color-squarefree} if  it is divisible by at most one variable of each color. A \emph{color-squarefree monomial order ideal} $U$ of $P(d,\mb)$ is a finite set $U$ of color-squarefree monomials satisfying the following conditions: 
\begin{itemize}
\item[(i)] $U$ is closed under divisibility;%if a monomial divides a monomial in $U$, then this monomial belongs to $U$; 

\item[(ii)]  $1, x_{1,1},\ldots x_{1,m_1},\ldots,x_{d,1},\ldots,x_{d,m_d} \in U$. 
\end{itemize}
In general, Condition (ii) is not assumed. However, it is convenient (and harmless). 

A color-squarefree monomial order ideal $U$ is said to be \emph{shifted} if it also satisfies: 
\begin{itemize}
\item[(iii)] any variable $x_{k, \ell}$ dividing a monomial in $U$ can be replaced by a variable $x_{k, j}$ of the same color with $\ell < j \le m_k$ to obtain another monomial in $U$. 
\end{itemize}

Given \emph{any} color-squarefree monomial order ideal $U$ of $P(d,\mb)$, we construct a balanced $(d-1)$-dimensional simplical complex  $\Delta_{\bal}(U)$  on a vertex set corresponding 
to the variables of $P(d, \mb + (1,\ldots,1))$ and call it the \emph{balanced squeezed complex} to $U$ (see \Cref{Definition:BalancedSqueezed}). It is defined by a description of its facets and shares several 
properties with squeezed balls. More precisely, any balanced squeezed complex $\Delta_{\bal}(U)$  is vertex-decomposable and as such shellable (see \Cref{thm:vertex decomposable}). As a consequence, we show  that $\Delta_{\bal}(U)$ has the same flag $h$-vector as $U$. Namely, for any subset $S$ of $\{1,\ldots,d\}$, the 
number $h_S (\Delta_{\bal}(U))$ counts the number of monomials in $U$ whose color support is $S$. Here, the color support of a monomial  $u=x_{i_1,j_1}\cdots x_{i_s,j_s} \in P(d, \mb)$ with $i_1 < \cdots < i_s$ is defined as $\cSupp (u) = \{i_1,\ldots,i_s\}$.  In addition, we explicitly describe the Stanley-Reisner ideal of $\Delta_{\bal}(U)$ (see \Cref{thm:Stanley-Reisner ideal}). Remarkably, and this is in contrast to the situation for general squeezed balls and complexes, all these results do not require $U$ to be shifted. 

We are also interested in the multigraded generic initial ideal of the Stanley-Reisner ideal of $\Delta_{\bal}(U)$. If $U$ is a shifted color-squarefree monomial  order ideal, the mentioned multigraded gin turns out to be a strongly color-stable ideal in the sense of  Murai \cite{Murai-ColorStable} and can be immediately read off from $U$ (see \Cref{thm:gin}). 

As an extension of the classical algebraic shifting (see, e.g., \cite{Ka-02}), Babson and Novik introduced color-shifting in \cite{Babson:Novik}. It associates to any balanced simplicial complex $\Gamma$ a color-shifted balanced complex $\widetilde{\Gamma}$ by passing from the Stanley-Reisner ideal of $\Gamma$ first to its strongly color-stable multigraded generic initial ideal and then to another squarefree monomial ideal, which is by definition the Stanley-Reisner ideal of $\widetilde{\Gamma}$. By the main result in \cite{Murai-ColorStable}, the $\Z$-graded Betti numbers of $\Gamma$ are bounded above by the $\Z$-graded Betti numbers of $\widetilde{\Gamma}$. We establish that for a color-squarefree shifted monomial order ideal $U$ the $\Z$-graded Betti numbers of its balanced squeezed complex $\Delta_{\bal}(U)$  and the ones of the complex $\widetilde{\Delta_{\bal}(U)}$ obtained by color-shifting coincide (see \Cref{thm:compare with shifted}). 

We also consider generalizations of strongly color-stable ideals and (classical) squarefree stable monomial ideals. More precisely, we introduce color-squarefree monomial ideals that are strongly color-stable across colors or color-squarefree stable across colors. The former class of ideals is defined to satisfy a stronger exchange property than strongly color-stable ideals (see \Cref{sec: prelim} for the precise definition). These ideals are in one-to-one correspondence to color-squarefree monomial order ideals that are shifted across colors, a similar notion that is stronger than just being shifted. We also characterize their balanced squeezed complexes (see \Cref{prop:squeezed across colors}). The second class of ideals may be viewed as a multigraded (or colored) generalization of squarefree stable monomial ideals in the standard setting (of one color). The class of these ideals includes every color-squarefree monomial ideal that is strongly color-stable across colors. As for squarefree stable monomial ideals (see \cite{EK}), we show that the $\mathbb{Z}^d$-graded Betti numbers of any squarefree monomial ideal that is color-squarefree stable across colors can be read off from its minimal generators (see \Cref{cor:Betti numbers color-sqfree stable}). 

Now we describe the organization of this article. After reviewing some basic concepts and results in the following section, we define balanced squeezed complexes in \Cref{sec:squeezed combintorics}. There we establish that any such complex is vertex-decomposable and describe its Stanley-Reisner ideal. These results are used in \Cref{sec:gins} in order to determine explicitly the multigraded generic initial ideal when one starts with a shifted color-squarefree monomial order ideal. In \Cref{sec:Betti} we investigate graded Betti numbers. There we determine the graded Betti numbers of any color-squarefree monomial ideal that is color-squarefree stable  across colors, and we establish the 
preservation of $\Z$-graded Betti numbers when passing from the balanced squeezed complex of a shifted color-squarefree monomial order ideal to its color-shifted complex. The latter result is a 
consequence of a more general fact that may be of independent interest. Namely, given any color-squarefree monomial ideal $I \subset P(d, \mb)$, define a squarefree monomial ideal $I' \subset P(d, \mb + (1,\ldots,1))$ by adding to the extension ideal of $I$ in $P(d, \mb + (1,\ldots,1))$ the ideal generated by all products of two variables of the same color. This ideal $I'$ can be minimally resolved by iterated mapping cones from a suitable long exact sequence (see \Cref{thm:add polarized squares}). A variant of this result (see \Cref{prop:add  squares}) may be viewed as colored version of Theorem 2.1 in \cite{MPS} about resolving the sum of a squarefree monomial ideal and the ideal generated by the squares of all variables. 

%%%%%%%%%%%%%%%%%%%%%%%%%%%%%

\section{Preliminaries}
\label{sec: prelim}

\subsection{Strongly color-stable ideals and color-squarefree shifted order ideals}\label{sect:prel1}
We start by fixing some notation. Given a non-negative integer $d\in \N$ and a vector $\mb=(m_1,\ldots,m_d)\in \N^d$ we write $P(d,\mb)$ for the polynomial ring in variables $x_{1,1},\ldots x_{1,m_1},\ldots,x_{d,1},\ldots,x_{d,m_d}$ over a field $\K$. We often refer to the variables $x_{i,1},\ldots,x_{i,d_i}$ as variables of \emph{color} $i$. We will consider the following total ordering on the variables of $P(d,\mb)$. We let $x_{i,j}\preceq x_{k,\ell}$ if $i<k$ or $i=k$ and $j<\ell$. If one writes the variables into a matrix (adding zeros at the end of rows if necessary), then $x_{k,\ell}$ is larger than all variables to the left or in the same column but below. Furthermore, for a monomial ideal $I$, we denote by $G(I)$ the set of monomial minimal generators of $I$. 

A monomial $u=x_{i_1,j_1}\cdots x_{i_s,j_s}$ is called \emph{color-squarefree} if $i_1<\cdots <i_s$. In other words, $u$ is squarefree and it is divisible by at most one variable of each color. E.g., the monomial $x_{1,1}x_{1,2}$ is squarefree but not color-squarefree. We use $\Mon(d,\mb)$ and $\Mon_{\cs}(d,\mb)$ to denote the set of monomials and  color-squarefree monomials in $P(d,\mb)$, respectively.  Note that whenever $u\in \Mon_{\cs}(d,\mb)$ and $v$ divides $u$, then also $v\in \Mon_{\cs}(d,\mb)$, i.e., the set $\Mon_{\cs}(d,\mb)$ is closed under taking divisors. With this we can define a \emph{color-squarefree monomial order ideal} $U$ to be a monomial order ideal contained in $\Mon_{\cs}(d,\mb)$. In other words, $U$ is a subset of $\Mon_{\cs}(d,\mb)$ that is closed under taking divisors, i.e., whenever $u\in U$ and $v$ divides $u$, then $v\in U$. Observe 
that any non-empty monomial order ideal contains the monomial $1$. Moreover, all color-squarefree monomial order ideals of $\Mon_{\cs}(d,\mb)$ are finite by definition. In the following, we will always assume that a color-squarefree monomial order ideal contains all the variables $x_{1,1},\ldots,x_{1,m_1},\ldots,x_{d,1},\ldots,x_{d,m_d}$. A monomial order ideal 
$U\subset\Mon_{\cs}(d,\mb)$ is called \emph{color-squarefree shifted} if it is color-squarefree and with each monomial $x_{k,\ell}\cdot u\in U$ also the monomial $x_{k,j}\cdot u$ lies in $U$ whenever $j\geq \ell$. 
%and $x_{k,j}\cdot u\in \Mon_{\cs}(d,\mb)$. 
If in addition $x_{i,j}\cdot u$ lies in $U$ for every monomial $x_{k,\ell}\cdot u\in U$ whenever $x_{k,\ell}\preceq x_{i,j}$ and $x_{i,j}\cdot u\in \Mon_{\cs}(d,\mb)$, then $U$ is called \emph{color-squarefree shifted across colors}. In other words, $U$ is shifted with respect to $\preceq$ in the usual sense as long as one does not leave $\Mon_{\cs}(d,\mb)$.  

Following \cite{Murai-ColorStable} we call a monomial ideal $I\subseteq P(d,\mb)$ \emph{strongly color-stable} if for all $x_{k,\ell}\cdot u\in I$ and $j<\ell$ we also have $x_{k,j}\cdot u\in I$. We say that a monomial ideal $I\subseteq P(d,\mb)$ is \emph{strongly color-stable across colors} if it is strongly color-stable and for all $x_{k,\ell}\cdot u\in I$ that are  color-squarefree and $x_{i,j}\preceq x_{k,\ell}$ such that $x_{i,j}\cdot u$ is color-squarefree, we also have $x_{i,j}\cdot u\in I$.  

It is easy to see that color-squarefree monomial order ideals in $P(d,\mb)$ and monomial ideals in $P(d,\mb)$ containing the ideal $(x_{1,1},\ldots,x_{1,m_1})^2+\cdots+ (x_{d,1},\ldots,x_{d,m_d})^2$ are in one-to-one-corres\-pon\-dence. Moreover, similar to the classical situation for shifted monomial order ideals and strongly stable ideals, under this correspondence, color-squarefree shifted monomial order ideals are mapped to strongly color-stable ideals. In order to describe the correspondence  we recall Definition 2.5 from \cite{JK:Nagel-Squeezed}.

\begin{definition}
Let $U\subseteq \Mon(d,\mb)$ be a monomial order ideal. Then $I(U)$ is the monomial ideal of $P(d,\mb)$ that is generated by the monomials in $\Mon(d,\mb)\setminus U$.
\end{definition}

To simplify notation, we set $\mf_i=(x_{i,1},\ldots,x_{i,m_i})$ for $1\leq i\leq d$ and fixed $\mb\in \mathbb{N}^d$. We further denote by $d_{\max}(U)$ and $d_{\max}(I(U))$ the maximal degree of a monomial in $U$ and a minimal generator of $I(U)$, respectively. 
We can now state the mentioned correspondence. 

\begin{lemma} 
     \label{lem:order ideal vs ideal}
Let $U\subseteq P(d,\mb)$ be a monomial order ideal. Then $U$ is color-squarefree shifted (across colors) if and only if $I(U)$ is strongly color-stable (across colors) and $\mf_1^2+\cdots +\mf_d^2\subseteq I(U)$. Moreover,
\begin{equation*}
2\leq d_{\max}(I(U))\leq d_{\max}(U)+1.
\end{equation*}
\end{lemma}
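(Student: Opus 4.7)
The plan is to unpack each claimed equivalence via contrapositives, using the key dictionary that since $U$ is an order ideal, for any monomial $w\in \Mon(d,\mb)$ one has $w\in U$ if and only if $w\notin I(U)$.

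First I would handle the color-squarefree part. A monomial fails to be color-squarefree precisely when it is divisible by some product $x_{k,j}x_{k,\ell}$ with $1\leq k \leq d$. Since $U$ is closed under divisors, $U\subseteq \Mon_{\cs}(d,\mb)$ is equivalent to saying that no such monomial lies in $U$, i.e.\ $\mf_1^2+\cdots+\mf_d^2\subseteq I(U)$. From now on I may therefore assume that $U$ is color-squarefree.

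For the shifted/strongly-color-stable equivalence, I would argue by contrapositive in both directions. Assuming $U$ is color-squarefree shifted, take $x_{k,\ell}\cdot u \in I(U)$ and $j<\ell$: if $u$ is not color-squarefree or already contains a variable of color $k$, then $x_{k,j}\cdot u$ automatically lies in $\mf_1^2+\cdots+\mf_d^2\subseteq I(U)$ for free; otherwise both $x_{k,\ell}\cdot u$ and $x_{k,j}\cdot u$ are color-squarefree, and if $x_{k,j}\cdot u$ were in $U$, the shifted hypothesis applied with $\ell > j$ would force $x_{k,\ell}\cdot u\in U$, contradicting $x_{k,\ell}\cdot u\in I(U)$. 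For the converse, start with $x_{k,\ell}\cdot u\in U$ and fix $j\geq\ell$; if $x_{k,j}\cdot u\notin U$, then $x_{k,j}\cdot u\in I(U)$ and strong color-stability applied with the pair $(j,\ell)$ yields $x_{k,\ell}\cdot u\in I(U)$, contradicting $x_{k,\ell}\cdot u\in U$. The across-colors variant is handled by the identical contrapositive, now using the total order $\preceq$ in place of "same color, larger index"; the only bookkeeping point is that the two definitions use opposite directions of $\preceq$ (shifting $U$ moves to $\preceq$-larger variables, while color-stability on $I(U)$ moves to $\preceq$-smaller variables), which is exactly what the complementation $w\in U\iff w\notin I(U)$ requires.

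For the degree bounds, both inequalities are immediate. Since $1$ and all $x_{k,j}$ lie in $U$, every minimal generator of $I(U)$ has degree at least $2$, and $x_{1,1}^2\in \mf_1^2\subseteq I(U)$ provides such a generator of degree exactly $2$, so $d_{\max}(I(U))\geq 2$. Conversely, if $v$ is a minimal generator of $I(U)$ of degree $d_{\max}(I(U))$, then for any variable $x_{k,j}$ dividing $v$ the quotient $v/x_{k,j}$ lies in $U$ by the order-ideal property, giving $d_{\max}(I(U))-1\leq d_{\max}(U)$. I do not expect a serious obstacle in this proof; the most delicate step is the $\preceq$-direction bookkeeping in the across-colors argument, and even that is essentially mechanical.
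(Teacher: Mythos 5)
Your argument is correct and matches the paper's own proof: both reduce $\mf_1^2+\cdots+\mf_d^2\subseteq I(U)$ to the color-squarefree condition on $U$, and then use the complement dictionary $w\in U\iff w\notin I(U)$ in a contrapositive translation between shiftedness of $U$ and strong color-stability of $I(U)$, in both the plain and across-colors variants. The only difference is that you give a short direct proof of the degree bound $2\leq d_{\max}(I(U))\leq d_{\max}(U)+1$, whereas the paper simply cites it from Lemma 2.6(i) of \cite{JK:Nagel-Squeezed}; your argument for it is sound.
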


\begin{proof} 
For the first statement, we note that $U$ is color-squarefree if and only if $(\mf_1^2+\cdots + \mf_d^2)\cap U=\emptyset$, i.e., $\mf_1^2+\cdots+ \mf_d^2\subseteq I(U)$. Assume that $U$ is color-squarefree shifted across colors. Let $x_{k,\ell}\cdot u\in I(U)$ be a color-squarefree monomial and let $x_{i,j}\preceq x_{k,\ell}$ be such that $x_{i,j}\cdot u$ is color-squarefree. If $x_{i,j}\cdot u\notin I(U)$, then $x_{i,j}\cdot u\in U$ and since $U$ is color-squarefree shifted across colors, we have $x_{k,\ell}\cdot u\in U$, which yields a contradiction. Hence, $I(U)$ is strongly color-stable across colors. The other direction follows from the same arguments. The statement for $U$ being only color-squarefree shifted follows in the same way.

The second statement is Lemma 2.6 (i) from \cite{JK:Nagel-Squeezed}. 
\end{proof}

\begin{example}
Let $U=\{1,x_{1,1},x_{1,2},x_{2,1},x_{2,2},x_{1,2}x_{2,2}\}\subseteq P(2,(2,2))$. It is easy to check that $U$ is a color-squarefree shifted order ideal with $d_{\max}(U)=2$ and 
\begin{equation}\label{eq:example}
I(U)=\langle x_{1,1}^2,x_{1,2}^2,x_{2,1}^2,x_{2,2}^2,x_{1,1}x_{1,2},x_{1,1}x_{2,1},x_{1,1}x_{2,2},x_{1,2}x_{2,1},x_{2,1}x_{2,2}\rangle.
\end{equation}
We observe that any monomial of degree $3$ monomial in $P(2,(2,2))$ is divisible by one of the degree $2$ monomials on the right-hand side of \eqref{eq:example}. Hence, $d_{\max}(I(U))=2$. 
\end{example}
 The previous example shows that, in contrast to the classical setting, it is not necessarily true that $d_{\max}(I(U))=d_{\max}(U)+1$, if $U$ is color-squarefree shifted \cite[Lemma 2.6 (ii)]{JK:Nagel-Squeezed}. Indeed, as the next example shows, we can even have $d_{\max}(I(U))=2$ independent of $d_{\max}(U)$. 
 
 \begin{example}
The set $\Mon_{\cs}(d,\mb)$ of all color-squarefree monomials of $P(d,\mb)$ is clearly a color-squarefree shifted monomial order ideal with $d_{\max}(\Mon_{\cs}(d,\mb))=d$. Moreover, since any non-color-squarefree monomial of arbitrary degree is divisible by a non-\-co\-lor-square\-free monomial of degree $2$, it follows that the minimal generators of $I(\Mon_{\cs}(d,\mb))$ are the non-color-squarefree monomials in $\Mon(d,\mb)$ of degree $2$, i.e., the degree $2$ monomials in $\Mon(d,\mb)\setminus \Mon_{\cs}(d,\mb)$. Hence, $d_{\max}(I(\Mon_{\cs}(d,\mb)))=2$. 
 \end{example}
 
\subsection{Combinatorics of simplicial complexes}
We recall basic facts on simplicial complexes, including some of their combinatorial properties. We refer to \cite{Stanley-greenBook} for more details.

Given a finite set $V$, an (abstract) \emph{simplicial complex} $\Delta$ on the vertex set $V$ is a collection of subsets of $V$ that is closed under inclusion. Throughout this paper, all simplicial complexes are assumed to be finite. 
Elements of $\Delta$ are called \emph{faces} of $\Delta$ 
and inclusion-maximal faces are called \emph{facets} of $\Delta$. The \emph{dimension} of a face $F \in \Delta$ is its cardinality minus one, and the \emph{dimension} of $\Delta$ is defined as $\dim\Delta:=\max\{\dim F~:~F\in \Delta\}$. $0$-dimensional and $1$-dimensional faces are called \emph{vertices} and \emph{edges}, respectively. 
A simplicial complex $\Delta$ is \emph{pure} if all its facets have the same dimension. The \emph{link} of a face $F\in \Delta$ is the subcomplex
$$
\lk_\Delta(F)=\{G\in\Delta~:~G\cap F=\emptyset,\; G\cup F\in \Delta\}.
$$
We will write $\lk_\Delta(v)$ for the link of a vertex $v$. 
The \emph{deletion} $\Delta\setminus F$ of a face $F\in \Delta$ from $\Delta$ describes the simplicial complex $\Delta$ outside of $F$:
$$
\Delta\setminus F=\{G\in \Delta~:~ F\not\subseteq G\}.
$$
For subsets $F_1,\ldots,F_m$ of a finite set $V$, we denote by $\langle F_1,\ldots,F_m\rangle$ the smallest simplicial complex containing $F_1,\ldots,F_m$, i.e.,
$$
\langle F_1,\ldots,F_m\rangle=\{G\subseteq V~:~G\subseteq F_i\mbox{ for some }1\leq i\leq m\}.
$$
Given a $(d-1)$-dimensional simplicial complex $\Delta$, we let $f_i(\Delta)$ denote the number of $i$-dimensional faces of $\Delta$ and we call $f(\Delta)=(f_{-1}(\Delta), f_0(\Delta), \ldots, f_{d-1}(\Delta))$ the \emph{$f$-vector} of $\Delta$. The \emph{$h$-vector} $h(\Delta)=(h_0(\Delta),h_1(\Delta),\ldots,h_d(\Delta))$ of $\Delta$ is defined via the relation
$$
\sum_{i=0}^{d}f_{i-1}(\Delta)(t-1)^{d-i}=\sum_{i=0}^dh_i(\Delta)t^{d-i}.
$$
We now consider several relevant combinatorial properties of simplicial complexes.

\begin{definition}
Let $\Delta$ be a pure simplicial complex.
\begin{itemize}
\item[(i)] $\Delta$ is called \emph{shellable} if there exists an ordering $F_1,\ldots,F_n$ of the facets of $\Delta$ such that for each $1< i\leq n$ the intersection $\langle F_i\rangle\cap\left(\bigcup_{j=1}^{i-1}\langle F_j\rangle\right)$ is generated by a non-empty set of maximal proper faces of $\langle F_i\rangle$. 
\item[(ii)] $\Delta$ is called \emph{vertex-decomposable} if either $\Delta=\{\emptyset\}$ or there exists a vertex $v\in \Delta$, a so-called \emph{shedding vertex}, such that $\lk_\Delta(v)$ and $\Delta\setminus \{v\}$ are vertex-decomposable. 
\end{itemize}
\end{definition}

It is well-known \cite{Provan:Billera} that vertex-decomposability is a stronger property than shellability.

%If $\Delta$ is a $(d-1)$-dimensional vertex decomposable simplicial complex with shedding vertex $v$, then the face numbers of $\Delta$ can be easily computed via the following recursion:
%$$
%f_i(\Delta)=f_i(\Delta\setminus v)+f_{i-1}(\lk(v)) \quad \mbox{for}\quad 0\leq i\leq d-1.
%$$
%Using the relations between $f$-vectors and $h$-vectors this immediately translates into:
%\begin{equation}\label{h-vector:vertexdecomposable}
%h_i(\Delta)=h_i(\Delta\setminus v)+h_{i-1}(\lk(v)) \quad \mbox{for}\quad 0\leq i\leq d.
%\end{equation}

The main focus in this article lies on balanced complexes, which were originally introduced by Stanley under the name \emph{completely balanced} complexes \cite{Sta79}. We say that a $(d-1)$-dimensional simplicial complex $\Delta$ on vertex set $V$ is \emph{balanced} if its $1$-skeleton is $d$-colorable, i.e., there is a ($d$-coloring) map $\kappa: V\to [d]$ such that $\kappa(u)\neq \kappa(v)$ for any edge $\{u,v\}\in \Delta$. In other words, an $r$-coloring corresponds to a partition $V_1\dot\cup\cdots \dot\cup V_r$ of $V$ such that $\#(F\cap V_i)\leq 1$ for all $F\in \Delta$ and $1\leq i\leq r$.  We often refer to the set $V_i$ as set of vertices of color $i$. From now on, we will always assume that $\Delta$ is endowed with a fixed coloring map $\kappa:V\to [d]$ and use this map. For a subset $S\subseteq [d]$, we set
$$
f_S(\Delta)=\#\{F\in \Delta~:~\kappa(F)=S\},
$$
where $f_\emptyset(\Delta)=1$, and 
$$
h_S(\Delta)=\sum_{T\subseteq S}(-1)^{\#(S\setminus T)}f_T(\Delta).
$$
The vectors $(f_S(\Delta)~:~S\subseteq [d])$ and $(h_S(\Delta)~:~S\subseteq [d])$ are called the \emph{flag $f$-vector} and the \emph{flag $h$-vector} of $\Delta$, respectively. The usual $f$- and $h$-vector can be recovered from their flag counterparts as follows:
$$
f_{i-1}(\Delta)=\sum_{S\subseteq [d],\#S=i}f_S(\Delta) \quad \text{and} \quad h_i(\Delta)=\sum_{S\subseteq [d],\#S=i}h_S(\Delta)
$$ for $0\leq i\leq d$. 
In the following, we consider a $(d-1)$-dimensional balanced, vertex-decomposable simplicial complex $\Delta$ with shedding vertex $v$. Restricting the $d$-coloring of $\Delta$ to the vertex set of $\lk_{\Delta}(v)$ and $\Delta \setminus v$, it is obvious that $\lk_{\Delta}(v)$ is balanced and so is $\Delta\setminus v$, either of dimension $d-1$ or $d-2$. It is easy to see that the flag $f$-vector of $\Delta$ can be computed via the following recursion: For $\emptyset\neq S\subseteq [d]$ one has
\begin{equation}\label{f-vector:vertexdecomposable}
f_S(\Delta)=\begin{cases}
f_S(\Delta\setminus \{v\})+f_S(\lk_\Delta(v)), \quad &\mbox{if }\kappa(v)\notin S\}\\
f_S(\Delta\setminus \{v\})+f_{S\setminus\{\kappa(v)\}}(\lk_\Delta(v)), \quad &\mbox{otherwise},
\end{cases}
\end{equation}
which simplifies to 
$$
f_S(\Delta)=f_S(\Delta\setminus \{v\})+f_{S\setminus\{\kappa(v)\}}(\lk_\Delta(v))\quad  \text{for any } \emptyset\neq S\subseteq [d].
$$
Using the relations between flag $f$- and $h$-vectors this immediately translates into:
\begin{equation}\label{h-vector:vertexdecomposable}
h_S(\Delta)=h_S(\Delta\setminus \{v\})+h_{S\setminus\{\kappa(v)\}}(\lk_\Delta(v)) \quad  \text{for any } \emptyset\neq S\subseteq [d].
\end{equation}
We want to remark that \eqref{f-vector:vertexdecomposable} and \eqref{h-vector:vertexdecomposable} have straightforward specializations to the usual $f$- and $h$-vectors.

As in the previous section, let $d\in \N$ and $\mb\in \N^d$. For $1\leq i \leq d$ let $V_i=\{1^{(i)},2^{(i)},\ldots ,(m_i+1)^{(i)}\}$ and let $V=\bigcup_{i=1}^d V_i$. Following \cite{Babson:Novik} and \cite{Murai-ColorStable}, we call a balanced simplicial complex $\Delta$ on vertex set $V=\bigcup_{i=1}^d V_i$ \emph{color-shifted} if whenever $F\in \Delta$ and $j^{(i)}\in F$ it also holds that $F\setminus\{j^{(i)}\}\cup \{k^{(i)}\}\in \Delta$ for all $m_{i}+1\geq k\geq j$. If, in addition $F\setminus \{j^{(i)}\}\cup\{k^{(\ell)}\}\in \Delta$ for all $d\geq \ell > i$ and all $1\leq k\leq m_\ell +1$, then $\Delta$ is called \emph{color-shifted across colors}. 

 Given a $(d-1)$-dimensional balanced simplicial complex  $\Delta$ on vertex set $V$ (as above) with coloring map $\kappa:V\to [d]$ there is a natural way to associate a finite set $U(\Delta)$ of color-squarefree monomials to it by taking the monomials given by 
 ${\displaystyle \prod_{j^{(i)} \in F}x_{i,j}}$ for $F\in \Delta$. Obviously, this construction can be reversed and $U(\Delta)$ is a monomial order ideal with
 \begin{equation}
\label{eq:boringCorrespondence}
f_i(\Delta)=\#\{u\in U(\Delta)~:~\deg(u)=i+1\} \qquad \mbox{for all} -1\leq i\leq d-1.
\end{equation}
The aim of this article is to provide and study a construction that,  given a  color-squarefree order ideal  $U$ on $d$ color classes, generates a $(d-1)$-dimensional balanced simplicial complex $\Delta_{\bal}(U)$ such that the right-hand side in \eqref{eq:boringCorrespondence} equals $h_{i+1}(\Delta_{\bal}(U))$. This will be the content of the next section.

\section{Balanced squeezed complexes and their combinatorial properties}
     \label{sec:squeezed combintorics}

In this section, given a shifted color-squarefree monomial order ideal $U\subset\Mon_{\cs}(d,\mb)$, we construct from it a $(d-1)$-dimensional balanced simplicial complex $\Delta_{\bal}(U)$ on vertex set $V(\Delta_{\bal})=\bigcup_{i=1}^d V_i$, where $V_i$ is the set of vertices of color $i$ and $\#V_i=m_i+1$ for $1\leq i\leq d$. We call $\Delta_{\bal}(U)$ the \emph{balanced squeezed complex} associated to $U$. The name is inspired by the so-called squeezed balls due to Kalai \cite{Kalai} and more generally, the squeezed complexes from \cite{JK:Nagel-Squeezed}, which can be associated to any shifted order ideal and which will be seen to share a lot of common properties with our new construction. One of the main differences however is that in the balanced setting, most of the results hold without the assumption of shiftedness. We will study combinatorial properties of those complexes in \Cref{Section:Combinatorics} and their Stanley-Reisner ideals in \Cref{Section:SR-ideals}. 

\subsection{Combinatorics of balanced squeezed complexes}\label{Section:Combinatorics}
We start with the crucial definition.
\begin{definition}\label{Definition:BalancedSqueezed}
Let $d\in \N$ and $\mb=(m_1,\ldots,m_d)\in \N_{\geq 0}^d$. Let $V_i=\{1^{(i)},2^{(i)},\ldots,(m_i+1)^{(i)}\}$ for $1\leq i\leq d$. 
\begin{itemize}
\item[(i)] For $u=x_{i_1,j_1}\cdots x_{i_s,j_s}\in \Mon_{\cs}(d,\mb)$ let $$
F_d(u)=\{j_1^{(i_1)},\ldots,j_s^{(i_s)}\}\cup\{(m_j+1)^{(j)}~:~j\in [d]\setminus\{i_1,\ldots,i_s\}\}\subseteq \bigcup_{i=1}^d V_i.$$
\item[(ii)]  Let $U\subseteq \Mon_{\cs}(d,\mb)$ be a finite set of color-squarefree monomials. Define $\Delta_{\bal}(U)$ to be the $(d-1)$-dimensional simplicial complex,  whose facets are given by $F_d(u)$ for $u\in U$. We call $\Delta_{\bal}(U)$ the  \emph{balanced complex} associated to $U$. 
\item[(iii)] If $U$ is a  color-squarefree order ideal, then we say that $\Delta_{\bal}(U)$ is \emph{balanced squeezed}.
\end{itemize}
\end{definition}

We illustrate the previous definition with an example which will serve as our running example throughout this article. 

\begin{example}\label{RunningExample}
Let $U$ be the smallest shifted color-squarefree monomial order ideal in $P(3,(2,2,2))$ containing $x_{1,2}x_{2,2}x_{3,2}$, i.e.,
\begin{equation*}
U=\{1,x_{1,1},x_{1,2},x_{2,1},x_{2,2},x_{3,1},x_{3,2},x_{1,2}x_{2,2},x_{1,2}x_{3,2},x_{2,2}x_{3,2},x_{1,2}x_{2,2}x_{3,2}\}.
\end{equation*}
Writing $abc$ for $\{a,b,c\}$, the facets of $\Delta_{\bal}(U)$ are given by 
\begin{align*}
&3^{(1)}3^{(2)}3^{(3)}, \quad 1^{(1)}3^{(2)}3^{(3)},\quad 2^{(1)}3^{(2)}3^{(3)},\quad
 3^{(1)}1^{(2)}3^{(3)},\quad 3^{(1)}2^{(2)}3^{(3)}, \quad 3^{(1)}3^{(2)}1^{(3)},\\
 &3^{(1)}3^{(2)}2^{(3)}, \quad 2^{(1)}2^{(2)}3^{(3)}, \quad 2^{(1)}3^{(2)}2^{(3)},\quad
 3^{(1)}2^{(2)}2^{(3)}, \quad 2^{(1)}2^{(2)}2^{(3)},
\end{align*}
where we list the facets in the order corresponding to the order of the monomials in $U$.
\end{example}

The next easy lemma justifies the occurrence of the word \emph{balanced} in part (ii) and (iii) of \Cref{Definition:BalancedSqueezed}.

\begin{lemma}\label{lemma:balanced}
Let $d\in \N$, $\mb=(m_1,\ldots,m_d)\in \N^d$ and let $U\subseteq \Mon_{\cs}(d,\mb)$ be a finite set of color-squarefree monomials. Then, 
$\Delta_{\bal}(U)$ is a $(d-1)$-dimensional balanced simplicial complex. The vertices of $\Delta_\bal(U)$ of color $i$ form a (possibly proper) subset of $V_i$.
\end{lemma}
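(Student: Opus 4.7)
The plan is to unwind the definition of $\Delta_{\bal}(U)$ and verify the three claims (correct dimension, correct $d$-coloring, vertices lie in the prescribed color classes) by inspecting the structure of a single facet $F_d(u)$. Everything ultimately follows from the color-squarefree condition on monomials in $U$, which forces each facet to meet every color class in exactly one vertex.

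First I would show that every facet $F_d(u)$ has exactly $d$ vertices. Write $u = x_{i_1,j_1}\cdots x_{i_s,j_s}$ with $i_1 < \cdots < i_s$, which is possible because $u$ is color-squarefree. Then $F_d(u)$ consists of the $s$ vertices $j_k^{(i_k)}$ for $k = 1,\ldots,s$, together with one vertex $(m_j+1)^{(j)}$ for each of the $d - s$ indices $j \in [d] \setminus \{i_1,\ldots,i_s\}$. These two sets are disjoint (the colors used are disjoint), so $|F_d(u)| = d$, and thus $\dim F_d(u) = d-1$. Since $\Delta_{\bal}(U)$ is by definition the simplicial complex generated by the sets $F_d(u)$ for $u \in U$, this immediately yields $\dim \Delta_{\bal}(U) = d-1$.

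Next I would introduce the coloring map $\kappa \colon V \to [d]$ given by $\kappa(j^{(i)}) = i$, and check that it is a proper $d$-coloring of the $1$-skeleton of $\Delta_{\bal}(U)$. The key observation, already contained in the inspection of $F_d(u)$ above, is that each facet contains exactly one vertex of each color: the vertices $j_k^{(i_k)}$ cover the colors $i_1,\ldots,i_s$, and the vertices $(m_j+1)^{(j)}$ cover the remaining colors. Hence every face, being contained in some facet, has at most one vertex of each color, which is exactly the condition that $\kappa$ is a valid $d$-coloring. Therefore $\Delta_{\bal}(U)$ is balanced.

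Finally, for the last statement, every vertex of $\Delta_{\bal}(U)$ of color $i$ arises in one of two ways from the construction of some $F_d(u)$: either as $j^{(i)}$ with $1 \le j \le m_i$ (coming from a variable $x_{i,j}$ that divides some $u \in U$), or as the ``top'' vertex $(m_i+1)^{(i)}$ (contributed whenever color $i$ is absent from $u$). In either case the vertex belongs to $V_i = \{1^{(i)}, 2^{(i)}, \ldots, (m_i+1)^{(i)}\}$, which is the claim. No step here is a genuine obstacle; the proof is essentially bookkeeping, and the only thing to be careful about is verifying that the two sets making up $F_d(u)$ really are disjoint, which is immediate from the fact that the indices $i_1,\ldots,i_s$ and the indices in $[d]\setminus\{i_1,\ldots,i_s\}$ are disjoint.
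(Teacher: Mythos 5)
Your proof is correct and takes essentially the same approach as the paper: count the vertices of a facet $F_d(u)$ to get dimension $d-1$, and then observe that $F_d(u)$ meets each color class $V_i$ in exactly one vertex, which gives balancedness and the containment in $V_i$. (Incidentally, your explicit count $s + (d-s) = d$ is clearer than the paper's formula, which has a sign typo.)
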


\begin{proof}
As $\#F_d(u)=\deg(u)-(d-\deg(u))=d$ we have $\dim\Delta_{\bal}(U)=d-1$. Moreover, by definition of $F_d(u)$ we have that $\#(F_d(u)\cap V_i)=1$ for any $u\in U$ and $1 \leq i\leq d$, which directly implies that $\Delta_{\bal}(U)$ is balanced.
\end{proof}

\begin{example}\label{example:crosspolytope}
Let $\mb = (1,\ldots,1) \in \N^d$ be the vector whose entries are all equal to one,  and let $U\subseteq P(d,\mb)$ be the monomial order ideal containing all color-squarefree monomials in $\Mon(d,\mb)$, i.e., $U=\Mon_{\cs}(d,\mb)$. We note that $U$ is color-squarefree shifted. It is easy to see that the balanced squeezed complex associated to $U$ is the boundary complex of the $d$-dimensional cross-polytope on vertex set $\{1^{(1)},2^{(1)},\ldots, 1^{(d)},2^{(d)}\}$. Indeed, the only non-faces of the $d$-dimensional cross-polytope $C_d$ on vertex set $\{1^{(1)},2^{(1)},\ldots, 1^{(d)},2^{(d)}\}$ are those, which would destroy balancedness, i.e., $\{1^{(i)},2^{(i)}\}$ for $1\leq i\leq d$. Since $\Delta_{\bal}(U)$ is balanced by \Cref{lemma:balanced} those are also missing faces of $\Delta_\bal(U)$, which implies that $\Delta_\bal(U)$ is a subcomplex of $C_d$. Moreover, we have
$$
F_d(x_{i_1,1}\cdots x_{i_s,1})=\{1^{(i_1)},\ldots,1^{(i_s)}\}\cup\{2^{(j)}~:~j\in [d]\setminus \{i_1,\ldots,i_s\}\},
$$
from which it follows that any face of $C_d$ belongs to $\Delta_{\bal}(U)$.

This implies that any color-squarefree order ideal in $P(d,\mb)$ gives rise to a balanced complex, that is a subcomplex of $C_d$ on vertex set $\{1^{(1)},2^{(1)},\ldots, 1^{(d)},2^{(d)}\}$.
\end{example}
Before we focus on the case of color-squarefree order ideals, we collect several immediate properties of $\Delta_{\bal}(U)$ for general $U$.

\begin{lemma}\label{Lemma:easyProperties}
Let $d\in \N$, $\mb=(m_1,\ldots,m_d)\in \N_{\geq 0}^d$ and let $\emptyset\neq U\subseteq \Mon_{\cs}(d,\mb)$ be a finite set of color-squarefree monomials. Then
\begin{itemize}
\item[(i)] $f_{d-1}(\Delta_{\bal}(U))=h_0(\Delta_{\bal}(U))+\cdots +h_d(\Delta_{\bal}(U))=\#U$.
\item[(ii)] Every element of $\bigcup_{i=1}^d V_i$ is a vertex of  $\Delta_{\bal}(U)$  if and only if for each variable $x_{i,j} \in P(d, \mb)$ there exists a monomial $u\in U$ that is divisible by $x_{i,j}$ and for each color $1\leq i\leq d$ there exists a monomial $u\in U$ not divisible by any variable of color $i$.
\end{itemize}
\end{lemma}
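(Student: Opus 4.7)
The plan is to derive both statements directly from the definition of the facets $F_d(u)$, by establishing the key bijection between $U$ and the set of facets of $\Delta_{\bal}(U)$.

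For part (i), I would first verify that the map $u \mapsto F_d(u)$ is a bijection from $U$ onto the set of facets of $\Delta_{\bal}(U)$. Surjectivity is immediate from \Cref{Definition:BalancedSqueezed}(ii). For injectivity, given $F = F_d(u)$ one recovers $u$ color by color: by \Cref{lemma:balanced}, $F$ contains a unique vertex $j^{(i)} \in V_i$ for each $i \in [d]$; if $j \le m_i$ then $x_{i,j}$ divides $u$, while if $j = m_i + 1$ then no variable of color $i$ divides $u$. This reconstructs $u$ uniquely and so $f_{d-1}(\Delta_{\bal}(U)) = \# U$. The equality $\sum_{i=0}^{d} h_i(\Delta_{\bal}(U)) = f_{d-1}(\Delta_{\bal}(U))$ then follows by specializing the defining relation
$$
\sum_{i=0}^{d} f_{i-1}(\Delta_{\bal}(U))(t-1)^{d-i} = \sum_{i=0}^{d} h_i(\Delta_{\bal}(U)) t^{d-i}
$$
to $t = 1$, since only the term with $i = d$ survives on the left.

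For part (ii), the same case analysis handles both directions. A vertex $j^{(i)} \in V_i$ lies in $\Delta_{\bal}(U)$ if and only if $j^{(i)} \in F_d(u)$ for some $u \in U$, and by the description of $F_d(u)$ this occurs precisely when either $j \le m_i$ and $x_{i,j}$ divides some $u \in U$, or $j = m_i + 1$ and some $u \in U$ is divisible by no variable of color $i$. Demanding this for every $j^{(i)}$ simultaneously yields exactly the two conditions in the statement.

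I do not anticipate any real obstacle: the argument is essentially bookkeeping with \Cref{Definition:BalancedSqueezed}. The only point worth highlighting is the role of the ``extra'' vertex $(m_i+1)^{(i)} \in V_i$, which has no corresponding variable in $P(d,\mb)$ and encodes the situation that $u$ carries no factor of color $i$. It is exactly this convention that makes $F_d(u)$ always $d$-element (hence the facets $(d{-}1)$-dimensional) and produces a bijection between $U$ and the set of facets rather than a many-to-one map.
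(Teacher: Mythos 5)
Your proposal is correct and follows essentially the same approach as the paper: part (i) reduces to the injectivity of $u \mapsto F_d(u)$ (which the paper simply calls ``immediate''), and part (ii) is proved by exactly the same color-by-color case analysis of when $j^{(i)} \in F_d(u)$. You merely spell out two details the paper leaves tacit, namely the reconstruction of $u$ from $F_d(u)$ and the $t=1$ specialization giving $\sum_i h_i = f_{d-1}$.
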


\begin{proof}
Part (i) is immediate since $F_d(u)\neq F_d(v)$ for $u,v\in U$ with $u\neq v$. 

For part (ii),  let $j^{(i)}\in V_i$. We have $j^{(i)}\in \Delta_{\bal}(U)$ if and only if there exists $u=x_{i_1,j_1}\cdots x_{i_s,j_s}\in U$ with
$$
j^{(i)}\in F_d(u)=\{j_1^{(i_1)},\ldots,j_s^{(i_s)}\}\cup\{(m_\ell+1)^{(\ell)}~:~\ell\in [d]\setminus\{i_1,\ldots,i_s\}\}.
$$
If $1\leq j\leq m_{i}$ this is the case if and only if $j^{(i)}\in \{j_1^{i_1},\ldots,j_s^{(i_s)}\}$, i.e., $x_{i,j}$ divides $u$. If $j=m_i+1$, then $j^{(i)}\in \{(m_\ell+1)^{(\ell)}~:~\ell\in [d]\setminus\{i_1,\ldots,i_s\}\}$, i.e., $u$ is not divisible by any variable of color $i$. 
\end{proof} 

We note that the balanced simplicial complexes that are obtained from  \Cref{Definition:BalancedSqueezed} when $U$ satisfies the conditions in  \Cref{Lemma:easyProperties} (ii) will have a cone point if and only if there exists a color class in $P(d,\mb)$ of size $0$, that is there exists $1\leq i\leq d$ with $m_i=0$. Though from a combinatorial point of view this case is hence rather uninteresting, for technical reasons it will be convenient to allow $m_i=0$ (e.g., to simplify the statement in \Cref{lemma:deletion}). In the following, we will restrict our attention to the situation in \Cref{Lemma:easyProperties} (ii). This includes any finite color-squarefree set $U\subseteq\Mon_{\cs}(d,\mb)$ with $1\in U$ and $x_{i,j}\in U$ for all $1\leq i\leq d $ and $1\leq j\leq m_i$ and in particular all finite color-squarefree order ideals (containing all the variables). 

%In the following, we will be particularly interested in the case that $U$ is a shifted color-squarefree order ideal.

It was shown in \cite{Kalai} and \cite{JK:Nagel-Squeezed} that squeezed and more generally $t$-squeezed complexes are vertex decomposable and that their $h$-vector is given by the right-hand side of \eqref{eq:boringCorrespondence}. In the following, we will show that this is also true for balanced squeezed complexes, even if $U$ is an order ideal which is not necessarily shifted. Moreover, the corresponding \emph{multigraded} version of this statement on the level of flag $h$-vectors is also true. The proof strategy follows the same lines as in \cite{JK:Nagel-Squeezed}.

\begin{lemma}\label{lemma:link}
Let $d\geq 2$, $U\subseteq P(d,\mb)$ be a color-squarefree order ideal and let $\widetilde{U}=\{u\in U~:~x_{1,1}\cdot u\in U\}$. Then 
$$
\lk_{\Delta_{\bal}(U)}(1^{(1)})=\langle F_d(u)\setminus \{(m_1+1)^{(1)}\}~:~u\in \widetilde{U}\rangle.
$$
In particular, $\lk_{\Delta_{\bal}(U)}(1^{(1)})$ is combinatorially isomorphic to the balanced complex of $\widetilde{U}$ considered as color-squarefree order ideal in $P(d-1,(m_2,\ldots,m_{d}))$.% with respect to the induced ordering of $\preceq$. 
\end{lemma}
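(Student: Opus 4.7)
The plan is to read off the facets of $\lk_{\Delta_{\bal}(U)}(1^{(1)})$ directly from the facet description of $\Delta_{\bal}(U)$ and then recognize them as facets of the balanced complex of $\widetilde U$. By \Cref{lemma:balanced} the complex $\Delta_{\bal}(U)$ is pure of dimension $d-1$, so the facets of the link of any vertex $w$ are exactly the sets $F \setminus \{w\}$ for facets $F$ of $\Delta_{\bal}(U)$ with $w \in F$. Thus I need to identify which monomials $v \in U$ satisfy $1^{(1)} \in F_d(v)$.

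Writing $v = x_{i_1,j_1}\cdots x_{i_s,j_s}$ with $i_1 < \cdots < i_s$, the definition of $F_d$ shows that $1^{(1)} \in F_d(v)$ iff either $x_{1,1} \mid v$, or else $1 \notin \{i_1,\ldots,i_s\}$ and $m_1 = 0$. Under the (implicit) assumption $m_1 \geq 1$ only the first alternative arises, and I would show that the assignment $v \mapsto u := v/x_{1,1}$ is a bijection between such $v$ and elements $u \in \widetilde U$. Indeed, $u \in U$ since $U$ is an order ideal, and $x_{1,1}u = v \in U$, so $u \in \widetilde U$; conversely, for $u \in \widetilde U$ the element $x_{1,1} u$ lies in $U$ by definition.

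Next I would verify the equality $F_d(x_{1,1} u)\setminus\{1^{(1)}\} = F_d(u)\setminus\{(m_1+1)^{(1)}\}$ for every $u \in \widetilde U$. Since $x_{1,1} u$ must be color-squarefree, $u$ is not divisible by any variable of color $1$; hence $(m_1+1)^{(1)}$ is the unique color-$1$ vertex in $F_d(u)$ and $1^{(1)}$ is the unique color-$1$ vertex in $F_d(x_{1,1}u)$. Removing these vertices produces in both cases the set
\[
\{j_1^{(i_1)},\ldots,j_s^{(i_s)}\} \cup \{(m_j+1)^{(j)}\,:\, j \in \{2,\ldots,d\}\setminus\{i_1,\ldots,i_s\}\},
\]
which proves the first displayed equality.

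For the second assertion I would check that $\widetilde U$ is a color-squarefree monomial order ideal in $P(d-1,(m_2,\ldots,m_d))$ after relabeling colors via $i \mapsto i-1$: closure under divisibility holds because $v \mid u \in \widetilde U$ implies $v \in U$ and $x_{1,1}v \mid x_{1,1}u \in U$, whence $x_{1,1}v \in U$; and no element of $\widetilde U$ involves a color-$1$ variable, so each $u \in \widetilde U$ is naturally a color-squarefree monomial in the reduced ring. Under the relabeling, the formula defining $F_{d-1}(u)$ reproduces exactly the facet $F_d(u)\setminus\{(m_1+1)^{(1)}\}$ identified above, yielding the claimed combinatorial isomorphism. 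The main obstacle is purely bookkeeping: keeping track that $1^{(1)} \neq (m_1+1)^{(1)}$, that the color-$1$ vertex of every relevant facet is correctly identified, and that no facets are lost or duplicated when passing between the two ambient rings.
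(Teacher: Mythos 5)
Your argument is correct and follows the same route as the paper's: use purity to identify the facets $F_d(v)$ containing $1^{(1)}$ as exactly those with $x_{1,1}\mid v$, set up the bijection $v \leftrightarrow v/x_{1,1}$ with $\widetilde{U}$, and apply the equality $F_d(x_{1,1}u)\setminus\{1^{(1)}\}=F_d(u)\setminus\{(m_1+1)^{(1)}\}$. You additionally flag the implicit assumption $m_1\geq 1$ (so that $1^{(1)}\neq (m_1+1)^{(1)}$) and verify that $\widetilde{U}$ is closed under divisibility, both of which the paper leaves tacit.
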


\begin{proof}
We first note that since $\Delta_{\bal}(U)$ is pure, so is $\lk_{\Delta_{\bal}(U)}(1^{(1)})$. A facet $F_d(u)\in \Delta_{\bal}(U)$ contains the vertex $1^{(1)}$ if and only if $x_{1,1}$ divides $u$. Hence,
$$
\lk_{\Delta_{\bal}(U)}(1^{(1)})=\langle F_d(u\cdot x_{1,1})\setminus \{1^{(1)}\}~:~u\in \widetilde{U}\rangle.
$$
Since $F_d(u\cdot x_{1,1})\setminus \{1^{(1)}\}=F_d(u)\setminus \{(m_1+1)^{(1)}\}$, the claim follows. As every monomial in $U$ is divisible by at most one variable of color $1$, the ``In particular''-part holds. 
\end{proof}

We remark that the proof of \Cref{lemma:link} works for any vertex $i^{(\ell)}$ with $1\leq \ell \leq d$ and $1\leq i\leq m_\ell$. In other words,
$$
\lk_{\Delta_{\bal}(U)}(i^{(\ell)})=\langle F_d(u)\setminus \{(m_\ell+1)^{(\ell)}\}~:~u\in U \mbox{ such that }x_{\ell,i}\cdot u\in U\rangle.
$$

For the deletion the following statement is true.
\begin{lemma}\label{lemma:deletion}
Let $U\subseteq P(d,\mb)$ be a color-squarefree order ideal. Let $i$ such that $m_i\geq 1$ and let $\hat{U}=\{u\in U~:~x_{i,1}\nmid u\}$. Then 
$$
\Delta_{\bal}(U)\setminus \{1^{(i)}\}=\langle F_d(u)~:~u\in \hat{U}\rangle.
$$
In particular, $\Delta_{\bal}(U)\setminus \{1^{(1)}\}$ is isomorphic to the balanced complex of $\hat{U}$ considered as color-squarefree order ideal in $P(d,(m_1,\ldots,m_{i-1},m_i-1,m_{i+1},\ldots,m_d))$.
\end{lemma}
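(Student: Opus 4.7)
The plan is to prove the set equality in two inclusions and then verify the isomorphism in the "in particular" part via a natural relabeling of vertices.

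For the inclusion $\langle F_d(u) : u \in \hat{U}\rangle \subseteq \Delta_{\bal}(U)\setminus\{1^{(i)}\}$, I would simply observe that for $u \in \hat{U}$ the monomial $u$ lies in $U$, so $F_d(u)$ is a facet of $\Delta_{\bal}(U)$, and since $x_{i,1}\nmid u$, we have $1^{(i)}\notin F_d(u)$ by the definition of $F_d(u)$. Thus every face of $\langle F_d(u) : u \in \hat{U}\rangle$ is a face of $\Delta_{\bal}(U)$ that avoids $1^{(i)}$.

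For the reverse inclusion, let $G \in \Delta_{\bal}(U)\setminus\{1^{(i)}\}$. Then $G$ is contained in some facet $F_d(u)$ of $\Delta_{\bal}(U)$, and I would split into two cases. If $x_{i,1}\nmid u$, then $u \in \hat{U}$ and $G \subseteq F_d(u) \in \langle F_d(u) : u\in \hat{U}\rangle$. Otherwise $u = x_{i,1} \cdot u'$; since $U$ is a color-squarefree order ideal, $u'$ lies in $U$, and because $u$ is color-squarefree, $x_{i,1}\nmid u'$, so $u' \in \hat{U}$. The key identity is
\[
F_d(u) \setminus \{1^{(i)}\} \;=\; F_d(u') \setminus \{(m_i+1)^{(i)}\} \;\subseteq\; F_d(u'),
\]
which follows directly from the defining formula for $F_d$ once one notes that the color-$i$ vertex of $F_d(u)$ is $1^{(i)}$ while that of $F_d(u')$ is $(m_i+1)^{(i)}$, and all other entries agree. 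Since $1^{(i)}\notin G$, we conclude $G \subseteq F_d(u) \setminus \{1^{(i)}\} \subseteq F_d(u')$, giving the required inclusion.

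For the "in particular" statement, I would set up an isomorphism by relabeling: on colors $\ell \neq i$ leave vertices untouched, while for color $i$ send $j^{(i)}\mapsto (j-1)^{(i)}$ for $2\le j \le m_i$ and $(m_i+1)^{(i)}\mapsto m_i^{(i)}$. Under this map, for any $u\in \hat{U}$, the facet $F_d(u)$ (computed in $P(d,\mb)$) is carried to the facet $F_d(u)$ computed in $P(d,(m_1,\ldots,m_i-1,\ldots,m_d))$, since $u$ has no variable $x_{i,1}$ and the color-$i$ "dummy" top vertex simply shifts down by one. I do not expect a real obstacle here — the only thing that needs genuine care is the second case of the reverse inclusion, where one must observe both that $u/x_{i,1}$ lies back in $U$ (using that $U$ is an order ideal) and that removing $1^{(i)}$ from $F_d(u)$ produces a codimension-one face of the facet $F_d(u/x_{i,1})$, which is precisely the content of the displayed identity above.
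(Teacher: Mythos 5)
Your proof is correct and follows essentially the same approach as the paper: both rest on the key identity relating $F_d(u)$ and $F_d(u/x_{i,1})$ when $x_{i,1}\mid u$. The only stylistic difference is that the paper establishes the reverse inclusion indirectly, by showing the deletion is pure of dimension $d-1$ (arguing by contradiction), whereas you argue directly that every face of the deletion sits inside some $F_d(u')$ with $u'\in\hat{U}$; the content is the same.
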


\begin{proof}
Let $u\in \hat{U}$, i.e., $x_{i,1}\nmid u$. Since $m_i\geq 1$, it follows that $1^{(i)}\notin F_d(u)$ and hence $F_d(u)\in \Delta_{\bal}(U)\setminus \{1^{(i)}\}$. This shows the containment of the right-hand side in the left-hand side of the above equation.

In order to show the reverse containment it suffices to show that $\Delta_{\bal}(U)\setminus \{1^{(i)}\}$ is pure and of the same dimension as $\Delta_{\bal}(U)$. First note that, $1\in \hat{U}$ and by the just shown containment $F_d(1)\in \Delta_{\bal}(U)\setminus \{1^{(i)}\}$. Hence, $\dim(\Delta_{\bal}(U)\setminus \{1^{(i)}\})=\dim\Delta_{\bal}(U)=d-1$, as required. Assume by contradiction that there exists a facet $F\in \Delta_{\bal}(U)\setminus \{1^{(i)}\}$ with $\dim F<d-1$. As $\Delta_{\bal}(U)\setminus\{ 1^{(i)}\}$ is a subcomplex of $\Delta_{\bal}(U)$, there exists $u\in U\setminus \hat{U}$ such that $F\subsetneq F_d(u)$. In particular, $x_{i,1}$ divides $u$, i.e., $u=x_{i,1}\cdot \tilde{u}$ for some monomial $\tilde{u}$. As $U$ is color-squarefree and an order ideal, it further holds that $x_{i,1}\nmid\tilde{u}$ and $\tilde{u}\in U$, respectively. We thus have $\tilde{u}\in \hat{U}$. Moreover, we have
 \begin{equation}\label{eq:FacetDiff}
F_d(\tilde{u})=\left(F_d(u)\setminus \{1^{(i)}\}\right)\cup \{(m_i+1)^{(i)}\}
\end{equation}
and $F_d(\tilde{u})\in \Delta_\bal(U)$ by the containment already shown. 
Since $1^{(i)}\notin F$, $1^{(i)}\in F_d(u)$ and $F\subsetneq F_d(u)$, we deduce that $F\subseteq F_d(u)\setminus \{1^{(i)}\}$ and by \eqref{eq:FacetDiff} it follows that $F\subseteq F_d(\tilde{u})$, which contradicts the maximality of $F$.
\end{proof}

The next statement is an almost immediate consequence of the previous two lemmas.

\begin{theorem}\label{thm:vertex decomposable}
Let $U\subseteq P(d,\mb)$ be a color-squarefree order ideal. Then $\Delta_{\bal}(U)$ is vertex-decomposable and in particular shellable. In particular, if $m_i=0$ for all $1\leq i\leq d$, then $\Delta_{\bal}(U)$ is a $(d-1)$-simplex. Otherwise, any vertex $1^{(i)}$ with $m_i\geq 1$ can be taken as a shedding vertex.
\end{theorem}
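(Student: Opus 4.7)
The plan is to use induction on the total number of variables $N = m_1 + \cdots + m_d$, using \Cref{lemma:link} and \Cref{lemma:deletion} to reduce the link and deletion at $1^{(i)}$ to balanced squeezed complexes over strictly smaller data. Shellability will then follow for free from vertex-decomposability via the Provan--Billera theorem cited in the preliminaries.

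For the base case $N = 0$ every $m_i$ equals $0$, so the only color-squarefree monomial in $P(d,\mb)$ is $1$. Hence $U = \{1\}$ and $\Delta_\bal(U)$ has the single facet $F_d(1) = \{1^{(1)}, \ldots, 1^{(d)}\}$; this is a $(d-1)$-simplex, which is vertex-decomposable by a trivial induction on $d$ (any vertex is a shedding vertex). This simultaneously establishes the ``In particular'' clause of the theorem concerning the case where all $m_i$ vanish.

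For the inductive step, assume $N \geq 1$ and pick any color $i$ with $m_i \geq 1$. The plan is to show that $1^{(i)}$ is a shedding vertex. By \Cref{lemma:link}, $\lk_{\Delta_\bal(U)}(1^{(i)})$ is isomorphic to the balanced complex of $\widetilde{U} = \{u \in U : x_{i,1}\cdot u \in U\}$, regarded as a color-squarefree subset of $P(d-1,(m_1,\ldots,\widehat{m_i},\ldots,m_d))$. A short verification shows $\widetilde{U}$ is itself a color-squarefree order ideal: if $v\mid u$ with $u\in\widetilde{U}$, then $x_{i,1}v\mid x_{i,1}u\in U$, so $x_{i,1}v\in U$ and $v\in\widetilde{U}$. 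Since the ambient total number of variables is $\sum_{j\neq i}m_j < N$, the inductive hypothesis gives vertex-decomposability of the link. Dually, \Cref{lemma:deletion} identifies $\Delta_\bal(U)\setminus\{1^{(i)}\}$ with the balanced complex of $\hat{U} = \{u\in U : x_{i,1}\nmid u\}$ inside $P(d,(m_1,\ldots,m_i-1,\ldots,m_d))$. Order-ideal closure of $\hat U$ is immediate from that of $U$, and now the variable count drops to $N-1 < N$, so the inductive hypothesis applies again. Combining both, $1^{(i)}$ is a shedding vertex and $\Delta_\bal(U)$ is vertex-decomposable.

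There is no genuine obstacle once the link and deletion lemmas are available; the only bookkeeping points are (a) confirming that $\widetilde U$ and $\hat U$ are themselves color-squarefree order ideals in the smaller polynomial rings, as above, and (b) permitting $m_j=0$ for some $j$ during the induction, which is exactly the technical convention allowed in \Cref{Definition:BalancedSqueezed} and \Cref{lemma:deletion} and was the reason for admitting $m_j=0$ in the first place.
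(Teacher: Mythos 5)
Your proof follows exactly the same strategy as the paper's: induct, pick a color $i$ with $m_i \geq 1$, and use \Cref{lemma:link} and \Cref{lemma:deletion} to identify the link and deletion at $1^{(i)}$ with balanced squeezed complexes over strictly smaller data. The one substantive difference is that you induct only on $N = m_1 + \cdots + m_d$, whereas the paper performs a double induction on $N$ and on $d$. This difference is not cosmetic: it opens a small gap. Your inductive step invokes \Cref{lemma:link}, which is stated with the hypothesis $d \geq 2$; nothing in your argument rules out reaching a case with $d = 1$ and $m_1 \geq 1$ (either as a starting point of the theorem, or after having passed to a link that lands in $P(1,(m_j))$). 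For such a case your inductive step as written does not apply. The paper sidesteps this by adding $d = 1$ as a separate base case, noting that then $\Delta_\bal(U)$ consists of $m_1 + 1$ isolated vertices, which is trivially vertex-decomposable. Adding that one sentence (or equivalently, treating the $d \geq 2$ restriction on \Cref{lemma:link} explicitly) closes the gap and makes your single-parameter induction correct. Your verifications that $\widetilde{U}$ and $\hat{U}$ remain color-squarefree order ideals, and the appeal to the $m_j = 0$ convention, match the paper's intent and are fine.
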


\begin{proof}
We use double induction on $m_1+\cdots +m_d$ and $d$. If $d=1$, then $U=\{1,x_{1,1},\ldots,x_{1,m_1}\}$ and $\Delta_{\bal}(U)$ consists of $m_1+1$ isolated vertices. If $m_1+\cdots+m_d=0$, then $U=\{1\}$ and $\Delta_{\bal}(U)$ is just a $(d-1)$-simplex.

Let $m_1+\cdots +m_d\geq 1$ and $d\geq 2$. Then there exists $1\leq i\leq d$ such that $m_i\geq 1$. By \Cref{lemma:link} $\lk_{\Delta_{\bal}(U)}(1^{(i)})$ is isomorphic to the balanced complex associated to the order ideal $\widetilde{U}=\{u\in U~:~x_{i,1}\cdot u\in U\}$ in $P(d-1,(m_1,\ldots,m_{i-1},m_{i+1},\ldots,m_d))$. The induction hypothesis implies that $\lk_{\Delta_{\bal}(U)}(1^{(i)})$ is vertex-decomposable. 
By \Cref{lemma:deletion} $\Delta_{\bal}(U)\setminus\{ 1^{(i)}\}$ is isomorphic to the balanced complex associated to the order ideal $\hat{U}=\{u\in U~:~x_{i,1}\nmid u\}$ in $P(d,(m_1,\ldots,m_{i-1},m_i-1,m_{i+1},\ldots,m_d))$. Again, the latter complex is vertex-decomposable by the induction hypothesis. The claim follows.
\end{proof}

\begin{example}
Let $\mb\in\N^d$ be the all ones vector. It follows from \Cref{example:crosspolytope} and \Cref{thm:vertex decomposable} that for any color-squarefree order ideal $U$ in $P(d,\mb)$, the balanced complex $\Delta_{\bal}(U)$ is a full-dimensonal subcomplex of $C_d$ which is vertex-decomposable. 
\end{example}

For $u\in \Mon(d,\mb)$ we set 
\begin{equation}\label{eq:multidegree}
\cSupp (u) =\{i~:~\mbox{ there exists }j \mbox{ such that }x_{i,j}\mbox{ divides } u\}
\end{equation}
and call this the  \emph{color support} of $u$. 
\Cref{thm:vertex decomposable} allows us to derive the following expression for the flag $h$-vector of $\Delta_{\bal}(U)$.

\begin{corollary}\label{cor:h-vector}
Let $U\subseteq P(d,\mb)$ be a color-squarefree order ideal. Then
\begin{equation}\label{eq:h-vector}
h_S(\Delta_{\bal}(U))=\#\{u\in U~:~\cSupp (u)=S\} \quad \mbox{for any } S\subseteq [d].
\end{equation}
\end{corollary}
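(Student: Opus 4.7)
My plan is to argue by induction on the pair $(d, m_1 + \cdots + m_d)$, paralleling the induction used in the proof of \Cref{thm:vertex decomposable}. The base cases are $d = 1$ (where $\Delta_\bal(U)$ is a disjoint union of $m_1 + 1$ vertices, giving $h_\emptyset = 1$ and $h_{\{1\}} = m_1$) and $m_1 + \cdots + m_d = 0$ (where $U = \{1\}$ and $\Delta_\bal(U)$ is a $(d-1)$-simplex, so $h_\emptyset = 1$ and $h_S = 0$ for $S \neq \emptyset$); both cases match the formula \eqref{eq:h-vector} by inspection, since $\cSupp(1) = \emptyset$ and $\cSupp(x_{1,j}) = \{1\}$.

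For the inductive step, I would fix an index $i$ with $m_i \geq 1$ and take $v = 1^{(i)}$ as a shedding vertex (provided by \Cref{thm:vertex decomposable}). By \Cref{lemma:link} and \Cref{lemma:deletion}, $\lk_{\Delta_\bal(U)}(v) \iso \Delta_\bal(\widetilde{U})$ is balanced over the $d-1$ colors indexed by $[d] \setminus \{i\}$, and $\Delta_\bal(U) \setminus \{v\} \iso \Delta_\bal(\hat{U})$ is balanced over $[d]$ with $m_i$ replaced by $m_i - 1$, where $\widetilde{U} = \{u \in U : x_{i,1} \cdot u \in U\}$ and $\hat{U} = \{u \in U : x_{i,1} \nmid u\}$. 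Both complexes are associated to strictly smaller color-squarefree order ideals, and the inductive hypothesis supplies the flag $h$-entries on each side. Substituting into the recursion~\eqref{h-vector:vertexdecomposable} at the shedding vertex $v$ reduces the claim to a combinatorial identity between monomial counts.

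This identity is verified by partitioning $\{u \in U : \cSupp(u) = S\}$ according to whether $x_{i,1}$ divides $u$. The monomials $u$ with $x_{i,1} \nmid u$ are exactly those in $\hat{U}$ with $\cSupp(u) = S$, matching the deletion contribution. When $i \in S$, the monomials $u$ with $x_{i,1} \mid u$ correspond bijectively, via $u \leftrightarrow u/x_{i,1}$, to $\{u' \in \widetilde{U} : \cSupp(u') = S \setminus \{i\}\}$; indeed, the order-ideal property of $U$ gives $u/x_{i,1} \in U$, and $x_{i,1} \cdot (u/x_{i,1}) = u \in U$ places it in $\widetilde{U}$. This matches the link contribution. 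When $i \notin S$, no such $u$ can be divisible by $x_{i,1}$, so the link term contributes $0$ and the deletion contribution alone gives the full count. The case $S = \emptyset$ is immediate since only $u = 1$ has empty color support.

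The main obstacle I foresee is the color-labeling bookkeeping required when passing to the link $\Delta_\bal(\widetilde{U})$: being balanced over only $d - 1$ colors, its flag $h$-entries are indexed by subsets of $[d - 1]$ and must be identified with subsets of $[d] \setminus \{i\}$ so that the link term $h_{S \setminus \{i\}}(\Delta_\bal(\widetilde{U}))$ counts monomials in $\widetilde{U}$ with color support $S \setminus \{i\}$. Once this identification is made explicit, the induction proceeds smoothly.
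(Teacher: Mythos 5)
Your argument is correct and is essentially the paper's own approach: the authors also induct on $m_1 + \cdots + m_d$, use $1^{(i)}$ as a shedding vertex via \Cref{thm:vertex decomposable} together with \Cref{lemma:link}, \Cref{lemma:deletion}, and the recursion \eqref{h-vector:vertexdecomposable}, though they omit the inductive step by referring to the analogous Corollary 3.9 of \cite{JK:Nagel-Squeezed}. Your write-up simply makes explicit the partition of $\{u \in U : \cSupp(u) = S\}$ by divisibility by $x_{i,1}$ and the resulting bijection with $\widetilde{U}$ and $\hat{U}$, which is exactly the combinatorial content that the paper leaves implicit.
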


\begin{proof}
The proof proceeds by induction on $m_1+\cdots+m_d$. For $m_1+\cdots+m_d=0$, we have seen in \Cref{{thm:vertex decomposable}} that $U=\{1\}$ and $\Delta_{\bal}(U)$ is a $(d-1)$-simplex. In particular, $h_S(\Delta_{\bal}(U))=1$ if $S=\emptyset$ and $h_S(\Delta_{\bal}(U))=0$, otherwise. This shows the base of the induction. We omit the induction step since it is almost verbatim the same as the proof of Corollary 3.9 in \cite{JK:Nagel-Squeezed}, using the flag $h$-vector together with \eqref{h-vector:vertexdecomposable} instead of the usual $h$-vector.
\end{proof}

\begin{example}
We consider the balanced squeezed complex $\Delta_\bal(U)$ from \Cref{RunningExample}. Let $S\subseteq [3]$. On the one hand, we compute
\begin{align*}
h_S(\Delta_\bal(U))=\begin{cases}
1, \quad &\mbox{if } \#S\in\{0,2,3\}\\
=2, \quad &\mbox{if }\#S=1.
\end{cases}
\end{align*}
On the other hand, we note that $U$ contains $2$ monomials with color support $S$ if $\#S=1$ and just one such monomial otherwise.
\end{example}

\begin{remark}\label{rem:h-vector}
We note that since $h_i(\Delta_{\bal}(U))=\sum_{S\subseteq[d],\#S=i}h_i(\Delta_{\bal}(U))$, Equation \eqref{eq:h-vector} implies
$$
h_i(\Delta_{\bal}(U))=\#\{u\in U~:~\deg(u)=i\} \quad \mbox{for } 0\leq i\leq d,
$$
where $\deg(u)$ denotes the usual degree of a monomial. 
\end{remark}

We end this section with a last observation.

\begin{lemma}\label{lem:compareIdealAndComplex}
Let $U\subseteq P(d,\mb)$ be a color-squarefree monomial order ideal. Then, $U$ is color-squarefree shifted if and only if the balanced squeezed complex $\Delta_{\bal}(U)$ is color-shifted. 
\end{lemma}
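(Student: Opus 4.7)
The plan is to translate both shifting conditions through the bijection $u \mapsto F_d(u)$ between monomials in $U$ and facets of $\Delta_{\bal}(U)$. The key observation is that replacing a variable $x_{i,\ell}$ by $x_{i,k}$ of the same color in a monomial $u \in U$ corresponds at the level of $F_d(u)$ to swapping the vertex $\ell^{(i)}$ with $k^{(i)}$, provided $1 \le k \le m_i$; if $k = m_i+1$, the analogous operation corresponds to removing $x_{i,\ell}$ from $u$ entirely and thus to replacing $\ell^{(i)}$ by the ``missing color'' marker $(m_i+1)^{(i)}$. Both directions of the equivalence then follow from chasing this correspondence on facets, combined with the fact that $U$ is an order ideal (so divisors of elements of $U$ remain in $U$) and $\Delta_{\bal}(U)$ is pure.

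For the forward direction, I will assume $U$ is color-squarefree shifted and take an arbitrary face $F \in \Delta_{\bal}(U)$ with $j^{(i)} \in F$ and some $k$ with $j \le k \le m_i+1$. Choose a facet $F_d(u) \supseteq F$. If $j = m_i+1$ there is nothing to prove, so assume $1 \le j \le m_i$, in which case $x_{i,j}$ divides $u$; write $u = x_{i,j} \cdot v$, where $v \in U$ because $U$ is an order ideal. If $k \le m_i$, the shifting hypothesis gives $x_{i,k} \cdot v \in U$, and the facet $F_d(x_{i,k} v) = (F_d(u) \setminus \{j^{(i)}\}) \cup \{k^{(i)}\}$ contains $(F \setminus \{j^{(i)}\}) \cup \{k^{(i)}\}$. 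If instead $k = m_i+1$, the facet $F_d(v) = (F_d(u) \setminus \{j^{(i)}\}) \cup \{(m_i+1)^{(i)}\}$ does the job. In either case the modified face lies in $\Delta_{\bal}(U)$.

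For the reverse direction, I will assume $\Delta_{\bal}(U)$ is color-shifted and take $x_{k,\ell}\cdot u \in U$ together with some $j$ satisfying $\ell < j \le m_k$ (the case $j = \ell$ is trivial). Applying color-shifting to the facet $F_d(x_{k,\ell} u)$ and its vertex $\ell^{(k)}$ yields that $G := (F_d(x_{k,\ell} u) \setminus \{\ell^{(k)}\}) \cup \{j^{(k)}\}$ belongs to $\Delta_{\bal}(U)$. Since $|G| = d$ and $G$ contains exactly one vertex of each color, it must be a facet, so $G = F_d(w)$ for some $w \in U$. Because $j \le m_k$, the color-$k$ vertex of $G$ is $j^{(k)} \ne (m_k+1)^{(k)}$, so color $k$ lies in $\cSupp(w)$ with coordinate $j$; comparing all other coordinates of $G$ with those of $F_d(x_{k,\ell} u)$ forces $w = x_{k,j} \cdot u$, as desired.

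The argument is essentially bookkeeping; the one subtlety to handle carefully is the dual role of $(m_i+1)^{(i)}$, which encodes ``no variable of color $i$ appears,'' so that shifting a vertex up to $(m_i+1)^{(i)}$ on the combinatorial side corresponds to deleting (rather than relabelling) a variable on the algebraic side. Once this is tracked in both directions, the equivalence follows with no further difficulty.
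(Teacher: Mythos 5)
Your argument is correct and follows essentially the same route as the paper: in the forward direction you pass from a face to a containing facet $F_d(u)$, split into the cases $k\le m_i$ (use the shifting hypothesis) and $k=m_i+1$ (use the order-ideal property), and observe the modified facet again has the form $F_d(\cdot)$; the paper's proof is identical. You also spell out the reverse direction (which the paper dismisses as "similar arguments") by applying color-shifting to the facet $F_d(x_{k,\ell}u)$, noting the resulting $d$-set must itself be a facet $F_d(w)$ by purity, and reading off $w=x_{k,j}u$ from the coordinates — this is sound and matches the intended argument.
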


\begin{proof}
We first assume that $U$ is color-squarefree shifted. 
Let $F\in \Delta_{\bal}(U)$. As $\Delta_{\bal}(U)$ is pure, there exists a facet $G$ of dimension $d-1$ in $\Delta$ with $F\subseteq G$. Then there exists $u=x_{i_1,j_1}\cdots x_{i_s,j_s}\in U$ with $1\leq i_1<\cdots <i_s\leq d$ and $1\leq j_\ell\leq m_{i_\ell}$ such that 
$$
G=F_d(u)=\{j_1^{(i_1)},\ldots,j_s^{(i_s)}\}\cup\{(m_\ell+1)^{(\ell)}~:~\ell\in [d]\setminus \{i_1,\ldots,i_s\}\}.
$$
 For a fixed $\ell$ with $1\leq \ell\leq s$ with $j_\ell^{(i_\ell)}\in F$, let $m_{i_\ell}+1\geq r\geq j_\ell$. As $\Delta_\bal(U)$ is balanced, we have $r^{(i_\ell)}\notin F$. First assume that $r\leq m_{i_\ell}$. Since $U$ is color-squarefree shifted, we have $v=x_{i_\ell,r}\cdot\frac{u}{x_{i_\ell,j_\ell}}\in U$ . Hence,
 $$
F_d(v)=\left(G\setminus  \{j_\ell^{(i_\ell)}\}\right)\cup\{r^{(i_\ell)}\}\in \Delta_{\bal}(U)
 $$
and in particular, $\left(F\setminus  \{j_\ell^{(i_\ell)}\}\right)\cup\{r^{(i_\ell)}\}\in \Delta_{\bal}(U)$. 
Now assume that $r=m_{i_\ell}+1$. Then $w=\frac{u}{x_{i_\ell,j_\ell}}\in U$, since $U$ is an order ideal.
We conclude that
$$
F_d(w)=\left(G\setminus  \{j_\ell^{(i_\ell)}\}\right)\cup\{(m_{i_\ell}+1)^{i_\ell}\}\in \Delta_{\bal}(U)
$$
and hence $\left(F\setminus  \{j_\ell^{(i_\ell)}\}\right)\cup\{m_{i_\ell}^{(i_\ell)}+1\}\in \Delta_{\bal}(U)$. This shows that $\Delta_\bal(U)$ is color-shifted. The other direction follows from similar arguments.
\end{proof}

\begin{example}
Let $U=\{1,x_{1,1},x_{2,1},x_{3,1}\}$. Then $U$ is color-squarefree shifted and even color-squarefree shifted across colors. $\Delta_\bal(U)$ is the simplicial complex on vertex set $\{1^{(1)},2^{(1)},1^{(2)},$\\
$\{2^{(2)},1^{(3)},2^{(3)}\}$ with facets $\{2^{(1)},2^{(2)},2^{(3)}\}$, $\{1^{(1)},2^{(2)},2^{(3)}\}$, $\{2^{(1)},1^{(2)},2^{(3)}\}$ and $\{2^{(1)},2^{(2)},1^{(3)}\}$.
It is easy to see that $\Delta_\bal(U)$ is color-shifted. However, it is not color-shifted across colors. For instance, the edge $\{1^{(1)},2^{(2)}\}$ would force the edge $\{1^{(1)},1^{(3)}\}$ to be present.
\end{example}

\subsection{Stanley-Reisner ideals of squeezed complexes}\label{Section:SR-ideals}
The aim of this section is to study the Stanley-Reisner ideals of the balanced squeezed complexes. In particular, we provide an explicit description of their minimal generating set. This result is in analogy with the results in \cite{JK:Nagel-Squeezed,Murai-Squeezed}. We will focus on the case of color-squarefree order ideals. 

The description of the minimal generating set $G(I_{\Delta_{\bal}(U)})$ turns out to be easier than in the case of general squeezed complexes. Moreover, it does not require the order ideal $U$ to be shifted.

\begin{theorem} \label{thm:Stanley-Reisner ideal}
Let $d\geq 2$, $\mb\in \N_{\geq 1}^d$ and let $U\subseteq P(d,\mb)$ be a color-squarefree order ideal. Then
\begin{equation}\label{eq:Stanley-Reisner ideal}
I_{\Delta_{\bal}(U)}=\langle u~:~u\in I(U) \mbox{ squarefree }\rangle+\langle x_{i,\ell}\cdot x_{i,m_i+1}~:~1\leq i\leq d,\;1\leq \ell\leq m_i\rangle.
\end{equation}
\end{theorem}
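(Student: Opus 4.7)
The plan is to identify each variable $x_{i,j}$ of $P(d, \mb+(1,\ldots,1))$ with the vertex $j^{(i)}$ of $\Delta_\bal(U)$, so that $I_{\Delta_\bal(U)}$ is generated by the squarefree monomials corresponding to minimal non-faces. I would then establish the two inclusions separately.

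For the inclusion of the right-hand side in $I_{\Delta_\bal(U)}$, each generator $x_{i,\ell}\cdot x_{i,m_i+1}$ corresponds to the set $\{\ell^{(i)}, (m_i+1)^{(i)}\}$, which has two vertices of color $i$ and so cannot be a face by the balancedness established in \Cref{lemma:balanced}. For a squarefree $u \in I(U)$ that is not color-squarefree, $u$ is divisible by some $x_{i,\ell_1}x_{i,\ell_2}$ with $\ell_1<\ell_2\leq m_i$, again a non-face by balancedness. If instead $u$ is color-squarefree, then $u \notin U$; the corresponding set $\{j^{(i)} : x_{i,j}\mid u\}$ cannot lie in any facet $F_d(v)$ with $v\in U$, since such a containment would force $u\mid v$ and then $u\in U$ by the order ideal property—a contradiction.

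For the reverse inclusion I would take an arbitrary minimal non-face $F$ of $\Delta_\bal(U)$ and show that its associated squarefree monomial $m_F$ lies in the right-hand side. If $F$ contains two vertices of the same color, minimality forces $F=\{j_1^{(i)}, j_2^{(i)}\}$, and then $m_F=x_{i,j_1}x_{i,j_2}$ is a squarefree non-color-squarefree element of $I(U)$ when $j_2\leq m_i$ and one of the listed generators when $j_2=m_i+1$. Otherwise $F$ picks at most one vertex per color; writing $F=\{j_i^{(i)}:i\in T\}$ with $T=T_1\sqcup T_2$ where $T_2=\{i\in T:j_i=m_i+1\}$, set $w:=\prod_{i\in T_1}x_{i,j_i}\in \Mon_{\cs}(d,\mb)$. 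The key assertion is that $F$ is a face of $\Delta_\bal(U)$ if and only if $w\in U$: the ``if'' direction uses $F\subseteq F_d(w)$ directly, noting that for $i\in T_2$ the vertex $(m_i+1)^{(i)}$ is exactly the default one supplied by $F_d(w)$ since color $i$ is absent from $w$; the ``only if'' uses that $F\subseteq F_d(v)$ forces $x_{i,j_i}\mid v$ for every $i\in T_1$, so $w\mid v$, and then $w\in U$ by the order ideal property. Granted this, if $T_2$ were non-empty, removing some $(m_{i_0}+1)^{(i_0)}$ would yield a strictly smaller set with the same $w$, hence still a non-face, contradicting minimality of $F$. So $T_2=\emptyset$, $m_F=w$, and $w\notin U$ places $m_F$ in the first summand.

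The step I expect to be the main conceptual obstacle is the key assertion ``$F$ is a face iff $w\in U$,'' together with the closely related observation that the ``default'' vertices $(m_i+1)^{(i)}$ play a purely bookkeeping role and cannot appear in a minimal non-face outside of the trivial two-vertices-of-one-color case. Once this redundancy is isolated, the remaining case analysis is routine and does not require $U$ to be shifted.
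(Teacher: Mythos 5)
Your proof is correct, but it takes a genuinely different route from the paper's. For the inclusion of the right-hand side $J$ into $I_{\Delta_\bal(U)}$, both you and the paper use the same two observations (balancedness forbids two vertices of one color, and $F\subseteq F_d(\tilde u)$ forces a divisibility that would put $u$ in $U$); you are slightly more explicit than the paper in also treating squarefree-but-not-color-squarefree generators of $I(U)$ separately, which is a welcome small completion. The real difference is in the reverse inclusion. The paper argues by downward induction on $\#U$: it starts from the full color-squarefree order ideal (where $\Delta_\bal$ is the clique complex of the complete $d$-partite graph), and at each step adds one maximal monomial $v$ to $U$, showing that the unique new minimal non-face acquired is exactly the ``restriction face'' $\{j_1^{(i_1)},\ldots,j_s^{(i_s)}\}$ of $F_d(v)$. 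You instead characterize minimal non-faces directly: your key equivalence ``$F$ with at most one vertex per color is a face iff the associated monomial $w=\prod_{i\in T_1}x_{i,j_i}$ lies in $U$'' is the same membership criterion the paper verifies inside its inductive step, but stated globally and applied once. Combined with the observation that minimality forces $T_2=\emptyset$ (the ``default'' vertices $(m_i+1)^{(i)}$ cannot occur in a minimal non-face except in the obvious two-vertices-of-one-color case), this gives the containment without induction. Your route is shorter and more elementary; the paper's inductive route has the side benefit that it exhibits a shelling order on $\Delta_\bal(U)$ along the way, which the paper reuses in the remark following the theorem to give an alternate proof of \Cref{cor:h-vector}.
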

Morally, the previous theorem tells us that $I_{\Delta_{\bal}(U)}$ coincides with the squarefree part of $I(U)$ considered as ideal in $P(d,\mb+(1,\ldots,1))$ plus some obvious minimal generators, which belong to $I_{\Delta_{\bal}(U)}$ due to balancedness and which involve one of the variables $x_{i,m_{i}+1}$ for $1\leq i\leq d$.

%Before we can provide the proof of \Cref{thm:Stanley-Reisner ideal} we need some preparations. 

\begin{proof}
To simplify notation, we set 
$$
J=\langle u~:~u\in I(U) \mbox{ squarefree }\rangle+\langle x_{i,\ell}\cdot x_{i,m_i+1}~:~1\leq i\leq d,\;1\leq \ell\leq m_i\rangle.
$$  
We first show that $J\subseteq I_{\Delta_{\bal}(U)}$. Let $u=x_{i_1,j_1}\cdots x_{i_s,j_s}\in I(U)$ be a squarefree minimal generator with $1\leq i_1<\cdots <i_s\leq d$. If $u\notin I_{\Delta_{\bal}(U)}$, then we would have $\{j_1^{(i_1)},\ldots,j_s^{(i_s)}\}\in \Delta_{\bal}(U)$ and there would exist $\tilde{u}\in U$ such that $\{j_1^{(i_1)},\ldots,j_s^{(i_s)}\}\subseteq F_d(\tilde{u})$. As $u$ does not involve the variables $x_{1,m_1+1},\ldots,x_{d,m_d+1}$, this implies that $u$ divides $\tilde{u}$. Since $U$ is an order ideal, it follows that $u\in U$, which is a contradiction. Moreover, as by \Cref{lemma:balanced} $\Delta_{\bal}(U)$ is balanced with $\{1^{(i)},\ldots,(m_i+1)^{(i)}\}$ being the vertices of color $i$, it follows directly that $\{\ell^{(i)},(m_i+1)^{(i)}\}\notin \Delta_{\bal}(U)$, i.e., $x_{i,\ell}\cdot x_{i,m_{i}+1}\in I_{\Delta_{\bal}(U)}$.

We show the reverse containment in \eqref{eq:Stanley-Reisner ideal} by induction on $\#U$. If $\#U=(m_1+1)\cdots (m_d+1)$, i.e., $U$ contains all color-squarefree monomials in $P(d,\mb)$, then it follows, as in \Cref{example:crosspolytope}, that the balanced complex $\Delta_{\bal}(U)$ is the clique complex of the complete $d$-partite graph on vertex set $\{1^{(1)},\ldots,(m_1+1)^{(1)}\}\cup\cdots\cup\{1^{(d)},\ldots,(m_d+1)^{(d)}\}$. Thus, every minimal  non-face is of the form $\{i^{(j)},\ell^{(j)}\}$, where $1\leq j\leq d$ and $1\leq i<\ell\leq m_j+1$. Moreover, since, by assumption on $U$, $I(U)$ only contains non-color-squarefree monomials, the claim follows. 
Now, let $\#U<(m_1+1)\cdots (m_d+1)$. Then there exists $v\in \Mon_{\cs}(d,\mb)\setminus U$. Moreover, we can choose $v\in \Mon_{\cs}(d,\mb)\setminus U$ such that $\widetilde{U}=U\cup\{v\}$ is a color-squarefree order ideal. Let $v=x_{i_1,j_1}\cdots x_{i_s,j_s}$ with $1\leq i_1<\cdots <i_s\leq d$ and $1\leq j_k\leq m_{i_k}$ for $1\leq k\leq s$. To simplify notation, we set
$J_{\widetilde{U}}=\langle u~:~u\in I(\widetilde{U}) \mbox{ squarefree }\rangle+\langle x_{i,\ell}\cdot x_{i,m_i+1}~:~1\leq i\leq d,\;1\leq \ell\leq m_i\rangle$. By induction hypothesis, we have that $I_{\Delta_{\bal}(\widetilde{U})}\subseteq J_{\widetilde{U}}$. Since $G(J_{\widetilde{U}})\subseteq G(J)$, it is now enough to show that 
\begin{equation}\label{eq:SR difference}
G(I_{\Delta_{\bal}(U)})\setminus G(I_{\Delta_{\bal}(\widetilde{U})})\subseteq G(J).
\end{equation}
For the left-hand side of \eqref{eq:SR difference} we need to determine those minimal non-faces of $\Delta_{\bal}(U)$ that are faces of $\Delta_{\bal}(\widetilde{U})$. Those are the minimal faces of $F_d(v)=\{j_1^{(i_1)},\ldots ,j_s^{(i_s)}\}\cup\{(m_j+1)^{(j)}~:~j\in [d]\setminus \{i_1,\ldots,i_s\}\}$ not belonging to $\Delta_{\bal}(U)$. Let $F=\{j_1^{(i_1)},\ldots,j_s^{(i_s)}\}$. Then $F\notin \Delta_{\bal}(U)$. Indeed, otherwise, there  exists $\widetilde{u}\in U$ with $F\subseteq F_d(\widetilde{u})$. By definition of $F_d(\cdot)$, it follows that $v$ divides $\widetilde{u}$ and, in particular, as $U$ is an order ideal, we have that $v\in U$, yielding a contradiction. Hence, $F\notin \Delta_{\bal}(U)$. If $G\subseteq F_d(u)$ with $F\not\subseteq G$, then there exists $1\leq \ell \leq s$ such that $j_\ell^{(i_{\ell})}\notin G$. Set $w=\frac{v}{x_{i_\ell,j_\ell}}$. As $U$ is an order ideal, we have $w\in U$ and, by construction, it holds that  $G\subseteq F_d(w)$. Hence, $G\in \Delta_{\bal}(U)$. This shows that $F$ is the only minimal non-face of $\Delta_{\bal}(U)$, belonging to $F_d(v)$. In particular, $G(I_{\Delta_{\bal}(U)})\setminus G(I_{\Delta_{\bal}(\widetilde{U})})=\{v\}$. As $v\notin U$, we have $v\in I(U)$ and hence, $v\in J$. The claim follows.
\end{proof}

We illustrate the previous theorem using the balanced squeezed complex from our running example,  \Cref{RunningExample}.
\begin{example}
The Stanley-Reisner ideal of the complex $\Delta_\bal(U)$ from \Cref{RunningExample} contains the obvious minimal generators $x_{i,\ell}x_{i,3}$ for $1\leq i\leq 3$ and $1\leq \ell\leq 2$ which are forced to lie in $I_{\Delta_\bal(U)}$ because of balancedness. Moreover, $I_{\Delta_\bal(U)}$ contains the squarefree monomials lying in $I(U)$, namely,
\begin{align*}
&x_{1,1}x_{1,2},\quad x_{2,1}x_{2,2},\quad x_{3,1}x_{3,2},\quad x_{1,1}x_{2,1},\quad x_{1,1}x_{2,2},\quad x_{1,1}x_{3,1},\\
& x_{1,1}x_{3,2},\quad x_{1,2}x_{2,1},\quad x_{1,2}x_{3,1},\quad x_{2,1}x_{3,1},\quad x_{2,1}x_{3,2},\quad x_{2,2}x_{3,1},
\end{align*}
where we only list the minimal generators of $I_{\Delta_\bal(U)}$. 
\end{example}

\begin{remark}
It follows from the proof of \Cref{thm:Stanley-Reisner ideal} that given any color-squarefree order ideal $U\subseteq P(d,\mb)$, there exists an ordering $u_1,\ldots,u_m$ of the monomials in $U$ such that $U_\ell=\{u_1,\ldots,u_\ell\}$ for $1\leq \ell\leq m$ is an order ideal. Moreover, $F_d(u_1),\ldots,F_d(u_m)$ is a shelling order of $\Delta_{\bal}(U)$. In particular, if $u_\ell=x_{i_1,j_1}\cdots x_{i_s,j_s}$ with $1\leq i_1<\cdots <i_s\leq d$ and $1\leq j_k\leq m_{i_k}$ for $1\leq k\leq $, then $\{j_1^{(i_1)},\ldots, j_s^{i_s}\}$ is the unique restriction face of $F_d(u_\ell)$. This also implies that
$$
h_S(\Delta_{\bal}(U))=\#\{u\in U~:~\cSupp (u)=S\} \quad \mbox{ for } S\subseteq [d],
$$
which provides another proof of \Cref{cor:h-vector}.
\end{remark}

%%%%%%%%%%%%%%%%%%%%%%%%%%

\section{Multigraded generic initial ideals and Stanley-Reisner ideals of balanced squeezed complexes}
     \label{sec:gins}

The aim of this section is to relate the multigraded generic initial ideal of the Stanley-Reisner ideal of a balanced squeezed complex $\Delta_{\bal}(U)$ to the color-squarefree strongly stable ideal $I(U)$.

We first recall some notions. Let $d\in \N$ and $\mb\in \N^d$. In the following, we consider the polynomial ring $P(d,\mb)$ endowed with an $\N^d$-grading by setting $\deg(x_{i,j})=\eb_i$ for $1\leq i\leq d$ and $1\leq j\leq m_i$. Here, $\eb_i$ denotes the $i$-th unit vector in $\RR^d$. 
The group $G=GL_{m_1}(\K)\times \cdots \times GL_{m_d}(\K)$ acts on $P(d,\mb)$ as the group of $\Z^d$-graded $\K$-algebra automorphisms. More explicitly, the element $\phi=(\phi_1,\ldots,\phi_d)\in G$ with $\phi_j=(a_{k,\ell}^{(j)})_{1\leq k,\ell\leq m_j}$ defines the $\Z^d$-graded automorphism of $P(d,\mb)$ induced by $\phi(x_{j,i})=\sum_{k=1}^{m_j}a_{k,i}^{(j)}x_{j,k}$. Let $\prec$ be a total order on the variables of $P(d,\mb)$, which satisfies 
$$
x_{j,m_j}\prec \cdots \prec x_{j,1}
$$
for all $1\leq j\leq d$. Given a $\Z^d$-graded ideal $I\subseteq P(d,\mb)$  we denote by $\ini_{\prec}(I)$ the initial ideal of $I$ induced by the order $\prec$. It is well-known that there exist Zariski open sets $U_i\subseteq GL_{m_i}(\K)$ for $1\leq i\leq d$ such that $\ini_\prec\phi(I)$ is independent of $\phi$ for all $\phi \in U_1\times \cdots \times U_d$. The ideal $\ini_\prec\phi(I)$ is called \emph{multigraded generic initial ideal} of $I$ with respect to $\prec$. We denote it  by $\gin_\prec(I)$. 

Multigraded generic initial ideals and arbitrary generic initial ideals are known to have similar properties.

\begin{lemma}
Let $\chara(\K)=0$ and let $I\subseteq P(d,\mb)$ be a $\Z^d$-graded ideal. Then the multigraded generic initial ideal $\gin_\prec(I)$ is strongly color-stable.
\end{lemma}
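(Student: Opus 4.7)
The plan is to adapt the classical proof of Galligo's theorem---that, in characteristic zero, the generic initial ideal of a graded ideal is strongly stable---to the $\Z^d$-graded setting by working color by color. For each $i$, let $B_i \subseteq \GL_{m_i}(\K)$ be the Borel subgroup of matrices upper triangular with respect to the basis $x_{i,1}, \ldots, x_{i,m_i}$ (recall that $\prec$ restricts to $x_{i,1} \succ \cdots \succ x_{i,m_i}$ within each color), and set $B = B_1 \times \cdots \times B_d \subseteq G$. The argument proceeds in two steps: (a) $b(\gin_\prec(I)) = \gin_\prec(I)$ for every $b \in B$ (valid in any characteristic), and (b) in characteristic zero, any $B$-fixed $\Z^d$-graded monomial ideal in $P(d, \mb)$ is strongly color-stable.

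For step (a), the standard proof of Borel-fixedness of the generic initial ideal transfers essentially verbatim to the multigraded setting. The product structure of $G$, the Borel subgroup $B$, and the generic open set $U_1 \times \cdots \times U_d$ all decompose compatibly along colors, and the standard one-parameter-conjugation argument (equivalently, the Hilbert-scheme limit description of the gin) works block-by-block.

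For step (b), fix a color $k$, indices $j < \ell$ in $[m_k]$, and a monomial $m \in \gin_\prec(I)$ divisible by $x_{k,\ell}$. Write $m = x_{k,\ell}^{a}\, m'$ with $x_{k,\ell} \nmid m'$, so $a \geq 1$. The one-parameter subgroup $\phi_t$ defined by $\phi_t(x_{k,\ell}) = x_{k,\ell} + t\, x_{k,j}$ and fixing every other variable lies in $B_k \subseteq B$ (upper triangular in the $k$-th color block, since $j < \ell$). Since $\phi_t(m') = m'$, the $B$-invariance yields
\begin{equation*}
\phi_t(m) \;=\; (x_{k,\ell} + t\, x_{k,j})^{a}\, m' \;=\; \sum_{i=0}^{a} \binom{a}{i}\, t^i\, x_{k,\ell}^{a-i}\, x_{k,j}^{i}\, m' \;\in\; \gin_\prec(I)
\end{equation*}
for every $t \in \K$. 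As $\gin_\prec(I)$ is a monomial ideal, every monomial with nonzero coefficient in this polynomial belongs to $\gin_\prec(I)$. The monomials $x_{k,\ell}^{a-i}\, x_{k,j}^{i}\, m'$ for $i = 0, \ldots, a$ are pairwise distinct, and the coefficients $\binom{a}{i}$ are nonzero in $\K$ thanks to $\chara \K = 0$. The $i = 1$ term therefore gives $(m/x_{k,\ell}) \cdot x_{k,j} \in \gin_\prec(I)$, which is exactly the strong color-stability condition.

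The main technical step is step (a), the $B$-invariance of $\gin_\prec(I)$; the multigraded adaptation poses no new difficulties beyond tracking the product decomposition carefully. The derivation argument in step (b) is a classical application of the characteristic zero hypothesis within a single color block and requires no extra work.
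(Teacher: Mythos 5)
The paper states this lemma without proof, treating it as a known fact (the sentence preceding it, ``Multigraded generic initial ideals and arbitrary generic initial ideals are known to have similar properties,'' signals that the multigraded analogue of Galligo's theorem is being taken from the literature). So there is no proof in the paper to compare against; I can only assess your argument on its own terms, and it is correct and is indeed the standard route.

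Step (b) is the part you actually spell out, and it checks out. The one-parameter subgroup $\phi_t$ with $\phi_t(x_{k,\ell}) = x_{k,\ell} + t\,x_{k,j}$, $j<\ell$, lies in $B_k$ precisely because $x_{k,j} \succ x_{k,\ell}$ under $\prec$, matching the paper's convention $x_{k,m_k} \prec \cdots \prec x_{k,1}$; and then the binomial expansion together with $\chara \K = 0$ (so that $\binom{a}{1}=a \neq 0$) extracts $(m/x_{k,\ell})\cdot x_{k,j} \in \gin_\prec(I)$, which is exactly the defining condition of strong color-stability used in the paper. Your remark that the monomials $x_{k,\ell}^{a-i}x_{k,j}^{i}m'$ are pairwise distinct even when $x_{k,j}$ divides $m'$ is worth keeping, since the $x_{k,\ell}$-exponent separates them. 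Step (a) is where the technical weight lives, and you leave it at the level of ``the classical argument transfers block by block.'' That is the right strategy: $G = \GL_{m_1}(\K)\times\cdots\times\GL_{m_d}(\K)$, its Borel $B=B_1\times\cdots\times B_d$, the generic open set $U_1\times\cdots\times U_d$, and the diagonal one-parameter subgroup $D_t$ realizing $\gin_\prec(I)$ as a flat limit are all block-diagonal, so the conjugation $D_t b D_t^{-1}$ behaves block by block exactly as in the one-color case. A complete write-up would want that spelled out (choice of integer weight vector inducing $\prec$, convergence of $D_t b D_t^{-1}$ to the diagonal part of $b$), but as a sketch it identifies the correct ingredients. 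In short: correct, and no more sketchy than the paper itself, which delegates the entire statement to prior work.
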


Our main result of this section is the following.

\begin{theorem}\label{thm:gin}
Let $U\subseteq P(d,\mb)$ be a color-squarefree shifted order ideal. Then
$$
I(U)P(d,\mb+(1,\ldots,1))=\gin_\prec(I_{\Delta_{\bal}(U)}).
$$
\end{theorem}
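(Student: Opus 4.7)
My plan is to compare $\gin_\prec(I_{\Delta_\bal(U)})$ with $J := I(U)P'$, where $P' := P(d,\mb+(1,\ldots,1))$, by working multidegree-by-multidegree. The approach has three parts: first show $J$ is its own multigraded gin; second, establish the containment $J \subseteq \gin_\prec(I_{\Delta_\bal(U)})$ by checking it on the minimal generators of $J$; finally, upgrade the containment to equality via matching $\Z^d$-graded Hilbert functions. Since $U$ is color-squarefree shifted, \Cref{lem:order ideal vs ideal} tells us $I(U)$ is strongly color-stable in $P(d,\mb)$, and this survives extension to $P'$ because the new variables $x_{i,m_i+1}$ are the smallest of their color in $\prec$ and do not appear in any generator of $I(U)$. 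Hence $J$ is strongly color-stable in $P'$, so $\gin_\prec(J) = J$.

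The heart of the argument is to show that for every $S \subseteq [d]$ the multidegree-$\eb_S$ pieces of the two ideals coincide as sets of monomials. A monomial $m$ of multidegree $\eb_S$ has the form $\prod_{i \in S}x_{i,\sigma(i)}$ with $\sigma(i) \in \{1,\ldots,m_i+1\}$, and is color-squarefree. Setting $T_\sigma := \{i \in S : \sigma(i) \le m_i\}$, I would combine \Cref{thm:Stanley-Reisner ideal} with the order-ideal property of $U$ to show that $m$ is a non-face of $\Delta_\bal(U)$ if and only if $\prod_{i \in T_\sigma}x_{i,\sigma(i)} \notin U$; since only color-squarefree generators of $I(U)$ can divide a color-squarefree monomial, the same condition characterizes $m \in J$. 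Because the multigraded change of variables preserves each $\Z^d$-degree, this forces $\gin_\prec(I_{\Delta_\bal(U)})_{\eb_S} = J_{\eb_S}$ (using $\gin_\prec(J)=J$), and in particular every color-squarefree minimal generator of $I(U)$ lies in $\gin_\prec(I_{\Delta_\bal(U)})$.

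For the remaining minimal generators of $J$, namely the degree-$2$ same-color monomials $x_{i,\ell_1}x_{i,\ell_2}$ with $\ell_1 \le \ell_2 \le m_i$ coming from $\mf_i^2$, I would work in multidegree $2\eb_i$. The piece $(I_{\Delta_\bal(U)})_{2\eb_i}$ is the squarefree edge ideal of the complete graph on $m_i+1$ vertices, sitting inside the color-$i$ subring. Since the multigraded change of variables is block-diagonal across colors, its multigraded gin in this multidegree reduces to the ordinary gin of this edge ideal in the color-$i$ subring, and a standard computation identifies it as the unique strongly stable ideal with Hilbert function constant equal to $m_i+1$ in positive degrees, namely $(x_{i,1},\dots,x_{i,m_i})^2 = \mf_i^2 P'$. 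Consequently $\mf_i^2 P' \subseteq \gin_\prec(I_{\Delta_\bal(U)})$, and combined with the previous step the containment $J \subseteq \gin_\prec(I_{\Delta_\bal(U)})$ follows.

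To upgrade to equality, I would verify that both sides have the same $\Z^d$-graded Hilbert function. In any multidegree $\mathbf{a}$ with $S := \{i : a_i > 0\}$, balancedness of $\Delta_\bal(U)$ forces the standard monomials of $I_{\Delta_\bal(U)}$ to be pure powers in each color (two variables of the same color in the support would violate balancedness), and such monomials correspond bijectively to faces of $\Delta_\bal(U)$ of color support $S$, giving $f_S(\Delta_\bal(U))$ standard monomials; a parallel count for $J$, splitting by the subset $T \subseteq S$ of colors that contribute an old variable and using \Cref{cor:h-vector}, also gives $\sum_{T \subseteq S}h_T(\Delta_\bal(U)) = f_S(\Delta_\bal(U))$. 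Since gin preserves the Hilbert function, the containment from the previous step is actually an equality, yielding the theorem. The main delicate point will be the identification $(I_{\Delta_\bal(U)})_{\eb_S} = J_{\eb_S}$: matching the Stanley-Reisner face characterization of \Cref{thm:Stanley-Reisner ideal} with the divisibility characterization in the extended ring requires carefully using both the order-ideal property of $U$ and the absence of new variables from the generators of $I(U)$.
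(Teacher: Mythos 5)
Your overall strategy is genuinely different from the paper's. The paper fixes a generic $\phi$ and, for each minimal generator $u$ of $I(U)$, explicitly constructs an element of $\phi(I_{\Delta_\bal(U)})$ with leading term $u$, proceeding color-by-color and invoking Murai's Theorem~1.10 technique via \Cref{Hilfslemma}. You instead exploit the fact that a $\Z^d$-degree-preserving change of coordinates makes $\gin_\prec(\cdot)_{\mathbf a}$ depend only on the degree-$\mathbf a$ piece of the input, so once you verify that $(I_{\Delta_\bal(U)})_{\eb_S} = J_{\eb_S}$ as $\K$-spaces for all $S\subseteq[d]$ (which you do correctly, using \Cref{thm:Stanley-Reisner ideal} plus the fact that no generator of $I(U)$ involves a new variable) and that $J$ is strongly color-stable and hence Borel-fixed, the color-squarefree generators of $I(U)$ land in $\gin_\prec(I_{\Delta_\bal(U)})$ automatically. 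That part of the argument is cleaner than the paper's and avoids the Hilfslemma machinery. The Hilbert-series comparison that upgrades containment to equality is the same as the paper's.

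The gap is in your treatment of multidegree $2\eb_i$. The reduction to the color-$i$ block (ordinary gin of the edge ideal $\Phi(\mm_i^2)\subset\K[x_{i,1},\ldots,x_{i,m_i+1}]$) is fine, but the claim that $\mm_i^2$ is \emph{the unique strongly stable ideal with Hilbert function constant $m_i+1$ in positive degrees} is false, so it cannot be used to pin down the gin. For $m_i=2$, the ideal $(x_1^2, x_1x_2, x_1x_3, x_2^3)\subset\K[x_1,x_2,x_3]$ is strongly stable and its quotient has Hilbert function $(1,3,3,3,\ldots)$, yet it is not $(x_1,x_2)^2$. Knowing that $\gin_\prec(\Phi(\mm_i^2))$ is strongly stable and has the right Hilbert function therefore does not force it to equal $\mm_i^2$. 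What you actually need is the argument of \Cref{lem:degree2}: since $\dim_\K \gin_\prec(\Phi(\mm_i^2))_2 = \binom{m_i+1}{2}$ and, for a generic revlex gin, the last variable $x_{i,m_i+1}$ does not appear in the generators (this uses that the quotient is Cohen--Macaulay of dimension $1$ and of regularity $1$, so a Bayer--Stillman-type property gives $x_{i,m_i+1}$ as a nonzerodivisor modulo the gin, and by regularity the gin is generated in degree $\le 2$), one concludes $\gin_\prec(\Phi(\mm_i^2))_2 = (\mm_i^2)_2$ by a dimension count inside the subring $\K[x_{i,1},\ldots,x_{i,m_i}]$. Your proposal needs to be amended to invoke this revlex-specific argument instead of the incorrect uniqueness statement; with that substitution, the rest of your route goes through.
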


The proof of \Cref{thm:gin} requires several preparations. First, we need to introduce some notation. 
For $1\leq i\leq d$ we set $\mf_i=(x_{i,1},\ldots,x_{i,m_i})$. Let
$$
\Phi:\Mon_{\cs}(d,\mb)\cup \Mon(d,\mb)_2\to \Mon_{\cs}(d,\mb+(1,\ldots,1))
$$
 be the map that is the identity everywhere outside the set $\{x_{i,j}^2~:~1\leq i\leq d,1\leq j\leq m_i\}$ and that maps $x_{i,j}^2$ to $x_{i,j}\cdot x_{i,m_i+1}$ for $1\leq i\leq d$ and $1\leq j\leq m_i$. If $I\subseteq P(d,\mb)$ is a monomial ideal with $G(I)\subseteq \Mon_{\cs}(d,\mb)\cup \Mon(d,\mb)_2$, then we set
$\Phi(I)=\langle \Phi(u)~:~u\in G(I)\rangle$. Note that, by \Cref{thm:Stanley-Reisner ideal}, for a color-squarefree order ideal $U\subseteq P(d,\mb)$ we have 
$$
\Phi(I(U))=I_{\Delta_{\bal}(U)}.
$$
We denote by $\leq_s$ the partial order on $\Mon_{\cs}(d,\mb)$, which is defined as follows: If $u=x_{i_1,j_1}\cdots x_{i_s,j_s}$ and $v=x_{i_1,\ell_1}\cdots x_{i_s,\ell_s}$ are monomials with the same color support (cf., \eqref{eq:multidegree}), then $u\leq_s v$ if and only if $j_k\leq \ell_k$ for all $1\leq k\leq s$. In other words, if $u$ and $v$ have the same color support, then $u\leq_s v$ if and only if for every strongly color-stable ideal $I$ with $v\in I$ one has $u\in I$. We can extend the partial order $\leq_s$ to another partial order $\leq_{cs}$ on $\Mon_{\cs}(d,\mb)$ by setting $u\leq_{cs} v$ if $u$ and $v$ have the same color support  and if $v$ lies in an ideal $I$ that is strongly color-stable across colors, then also $u\in I$. In particular, we have $u\leq_{cs} v$ if $u\leq_s v$. 

The proof of \Cref{thm:gin} will be based on the following two lemmas. 

\begin{lemma}\label{Hilfslemma}
Let $I\subseteq P(d,\mb)$ be a strongly color-stable ideal (resp. strongly colors-stable across colors) with $\mf_1^2+\cdots +\mf_d^2\subseteq I$ that does not contain variables. Let $u=x_{i_1, j_1}\cdots x_{i_s, j_s}\in G(I)$ and let $v\leq_s u$ (resp. $v\leq_{cs} u$). Then $\Phi(v)\in \Phi(I)$. 
\end{lemma}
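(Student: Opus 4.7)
The plan is to deduce from the hypothesis that $v$ itself lies in $I$, and then to observe that $\Phi$ acts as the identity on color-squarefree monomials, so the minimal generator of $I$ witnessing $v \in I$ already lies in $\Phi(I)$ and divides $\Phi(v)$ there.

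First, I would note that both partial orders $\leq_s$ and $\leq_{cs}$ are defined only on $\Mon_{\cs}(d,\mb)$, so the hypothesis forces both $u$ and $v$ to be color-squarefree with the same color support. By the defining property of $\leq_s$ (respectively $\leq_{cs}$), every strongly color-stable (respectively strongly color-stable across colors) ideal that contains $u$ must also contain $v$; applying this to $I$ itself yields $v \in I$.

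Next, I would analyze $G(I)$. Since $\mf_1^2 + \cdots + \mf_d^2 \subseteq I$ and the minimal generators of this sum are precisely the monomials $x_{i,j} x_{i,\ell}$ for $1 \leq i \leq d$ and $1 \leq j \leq \ell \leq m_i$, every $w \in G(I)$ is either of this form or is color-squarefree. Pick any $w \in G(I)$ that divides $v$. Because $v$ is color-squarefree, it cannot be divisible by two variables of the same color, so $w$ must be color-squarefree. Hence $\Phi(w) = w$ and $\Phi(v) = v$, which gives $\Phi(w) \mid \Phi(v)$ in $P(d,\mb + (1,\ldots,1))$ and therefore $\Phi(v) \in (\Phi(w)) \subseteq \Phi(I)$.

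I do not expect a serious obstacle: the argument is mostly a careful unpacking of the definitions. The key observation, and arguably the only nontrivial point, is that the non-color-squarefree minimal generators of $I$ (precisely the ones altered by $\Phi$) cannot divide the color-squarefree monomial $v$, so only the generators that are fixed by $\Phi$ are actually relevant for witnessing membership in $\Phi(I)$.
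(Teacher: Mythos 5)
Your proof is correct and follows essentially the same route as the paper's. Steps two through five are precisely the paper's second case: deduce $v\in I$ from strong color-stability (resp.\ across colors), pick a minimal generator $w$ of $I$ dividing $v$, observe that $w$ inherits color-squarefreeness from $v$ so that $\Phi(w)=w$ divides $\Phi(v)=v$, and conclude $\Phi(v)\in\Phi(I)$. You are also slightly more explicit than the paper about why $\Phi(w)=w$, which is helpful.

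One remark worth making: the paper's own proof begins with a preliminary case ``$v\in\mf_1^2+\cdots+\mf_d^2$'', i.e.\ $v$ not color-squarefree, and dispatches it by noting that such a $v$ must then be a degree-$2$ minimal generator of $I$ (using that $I$ contains no variables), so $\Phi(v)\in\Phi(I)$ by definition of $\Phi(I)$. You rule this case out a priori by appealing to the fact that $\leq_s$ and $\leq_{cs}$ are defined on $\Mon_{\cs}(d,\mb)$, which is faithful to the formal definition given in the paper. However, in the proof of \Cref{thm:gin} the authors apply these orders to degree-$2$ monomials of the form $x_{j,k}x_{j,\ell}$ that are not color-squarefree, so in practice they use $\leq_s$ on the slightly larger domain $\Mon_{\cs}(d,\mb)\cup\Mon(d,\mb)_2$. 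Under that extended reading, your argument would need the paper's extra one-line case. It is trivial to add, but it is worth being aware that the hypothesis ``$I$ does not contain variables'' is used precisely there (to guarantee degree-$2$ monomials of $I$ are minimal generators), and plays no role in the color-squarefree case you treat.
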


\begin{proof}
If $v\in \mf_1^2+\cdots +\mf_d^2$, then $v\in G(I)$ and hence $\Phi(v)\in \Phi(I)$. 
Let $v\in \Mon_{\cs}(d,\mb)\setminus (\mf_1^2+\cdots +\mf_d^2)$. By definition of $\leq_s$ (resp. $\leq_{cs}$) also $u$ has to be a color-squarefree monomial. As $I$ is strongly color-stable (resp. strongly color-stable across colors), we conclude that $v\in I$. If $v\in G(I)$, then $\Phi(v)\in \Phi(I)$. If not, then there exists a monomial $w\in G(I)$ that divides $v$. In particular, as $\Phi(w)=w$ divides $\Phi(v)=v$ and $\Phi(w)\in \Phi(I)$, it follows that $\Phi(v)\in \Phi(I)$. This finishes the proof.
\end{proof}

\begin{lemma}\label{lem:degree2}
Let $\mathfrak{m}^2=(x_1,\ldots,x_m)^2\subseteq P(1,m)$. Then, the degree $2$ part of $\mathfrak{m}^2$ equals  the degree $2$ part of $\gin_\prec\Phi(\mathfrak{m}^2)$.
\end{lemma}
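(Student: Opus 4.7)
We extend $\mathfrak{m}^2$ to an ideal of $P(1,m+1)$; then $(\mathfrak{m}^2)_2 = \langle T\rangle$ where $T := \{x_i x_j : 1 \le i \le j \le m\}$ has dimension $\binom{m+1}{2}$. Since $\Phi(\mathfrak{m}^2) = \langle x_i x_j : 1 \le i < j \le m+1\rangle$ has a degree-two part of the same dimension, and the multigraded generic initial ideal preserves the Hilbert function, $\dim_\K(\gin_\prec\Phi(\mathfrak{m}^2))_2 = \binom{m+1}{2}$. It therefore suffices to establish one inclusion in degree two.

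Let $B := \{x_1 x_{m+1}, \ldots, x_m x_{m+1}, x_{m+1}^2\}$ denote the remaining $m+1$ degree-two monomials. Under the reverse-lexicographic extension of $\prec$ (with $x_1 \succ \cdots \succ x_{m+1}$), every monomial in $B$ is strictly smaller than every monomial in $T$, and $P(1,m+1)_2 = \langle T\rangle \oplus \langle B\rangle$. The key step is to show that for a generic $\phi \in GL_{m+1}(\K)$ the subspace $\phi(\Phi(\mathfrak{m}^2))_2$ is also a direct-sum complement of $\langle B\rangle$ in $P(1,m+1)_2$; by the dimension equality above this reduces to the transversality condition $\langle B\rangle \cap \phi(\Phi(\mathfrak{m}^2))_2 = 0$. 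Given such a $\phi$, each $t \in T$ decomposes as $t = v + b$ with $v \in \phi(\Phi(\mathfrak{m}^2))_2$ and $b \in \langle B\rangle$, so $v = t - b$ has support in $\{t\} \cup B$ and hence leading term $t$. Thus $T \subseteq \ini_\prec\phi(\Phi(\mathfrak{m}^2))$, and combining with the dimension count yields $(\gin_\prec\Phi(\mathfrak{m}^2))_2 = \langle T\rangle = (\mathfrak{m}^2)_2$.

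For the genericity claim, let $\Uc := \{\phi \in GL_{m+1}(\K) : \langle B\rangle \cap \phi(\Phi(\mathfrak{m}^2))_2 = 0\}$; this is a Zariski open subset, and the locus $\Uc_0$ on which $\ini_\prec\phi(\Phi(\mathfrak{m}^2)) = \gin_\prec\Phi(\mathfrak{m}^2)$ is also Zariski open and nonempty. By the irreducibility of $GL_{m+1}(\K)$, $\Uc \cap \Uc_0 \neq \emptyset$ as soon as $\Uc \neq \emptyset$. To exhibit a point of $\Uc$, let $\phi_0$ fix $x_i$ for $i \le m$ and send $x_{m+1}$ to $\sum_{k=1}^{m+1} a_k x_k$ with each $a_k \neq 0$. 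A spanning set for $\phi_0(\Phi(\mathfrak{m}^2))_2$ is then $\{x_i x_j : 1 \le i < j \le m\} \cup \{x_i \sum_k a_k x_k : 1 \le i \le m\}$, and for a combination $\sum_{i<j\le m}\alpha_{ij}\,x_i x_j + \sum_{i\le m}\beta_i\,x_i\sum_k a_k x_k$ to lie in $\langle B\rangle$, its coefficient of $x_i^2$, namely $a_i\beta_i$, must vanish; since $a_i \neq 0$ this forces $\beta_i = 0$, and then the coefficient of $x_i x_j$ ($i<j \le m$) reduces to $\alpha_{ij}$, forcing $\alpha_{ij}=0$. Hence $\phi_0 \in \Uc$.

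The principal subtlety is the transversality verification: one needs a concrete $\phi_0$ whose action on $\Phi(\mathfrak{m}^2)_2$ places it transverse to $\langle B\rangle$. The triangular choice above works precisely because $\phi_0$ leaves the original squarefree generators $x_i x_j$ ($i<j\le m$) untouched, which allows the coefficient of each pure square $x_i^2$ in any alleged element of $\langle B\rangle \cap \phi_0(\Phi(\mathfrak{m}^2))_2$ to be read off cleanly. Once transversality is in hand, the leading-term extraction is automatic from the revlex property that monomials in $T$ dominate those in $B$.
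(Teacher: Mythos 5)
Your argument is correct, and it takes a genuinely different route from the one in the paper. Both proofs open with the same Hilbert-function step, showing that the degree-two part of $\gin_\prec\Phi(\mathfrak{m}^2)$ has dimension $\binom{m+1}{2}$, equal to $\dim_\K \Mon(1,m)_2$, so that proving either inclusion suffices. After that they diverge. The paper establishes the inclusion $\bigl(\gin_\prec\Phi(\mathfrak{m}^2)\bigr)_2 \subseteq \langle \Mon(1,m)_2\rangle$, tersely (and with a missing $\Phi$ in the displayed formula); the justification left implicit is the standard depth-preservation property of reverse-lexicographic generic initial ideals: $\Phi(\mathfrak m^2)$ is the Stanley--Reisner ideal of $m+1$ disjoint points, so $P(1,m+1)/\Phi(\mathfrak m^2)$ is Cohen--Macaulay of depth $1$, whence the Borel-fixed ideal $\gin_\prec\Phi(\mathfrak m^2)$ has no minimal generator divisible by the last variable $x_{m+1}$, and in particular no degree-two monomial in the gin involves $x_{m+1}$. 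You instead prove the opposite inclusion $\Mon(1,m)_2 \subseteq \bigl(\gin_\prec\Phi(\mathfrak{m}^2)\bigr)_2$ directly: you identify the transversality condition $\langle B\rangle \cap \phi\bigl(\Phi(\mathfrak m^2)\bigr)_2 = 0$ as the key open condition, exhibit a concrete $\phi_0$ satisfying it (your coefficient bookkeeping is correct, and the triangular matrix is indeed invertible since $a_{m+1}\ne 0$), invoke irreducibility of $GL_{m+1}(\K)$ to intersect with the gin-computing locus, and then read off leading terms using that every monomial in $B$ is revlex-smaller than every monomial in $T$. What the paper's approach buys is brevity for readers who already know the Bayer--Stillman depth result and the Eliahou--Kervaire formula for projective dimension of Borel-fixed ideals; what yours buys is a self-contained, elementary argument that avoids invoking any of that machinery and that also makes the genericity clause completely explicit rather than leaving it to the Zariski-openness in the definition of gin.
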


\begin{proof}
We note that by definition of $\Phi$, it holds that 
$\dim_\K (\Phi(\mathfrak{m}^2))_2=\binom{m+1}{2}.$ 
Since the Hilbert function is preserved under taking generic initial ideals it follows that 
$$
\dim_\K \left(\gin_{\prec}(\Phi(\mathfrak{m}^2))_2\right)=\binom{m+1}{2}.
$$
On the other hand, as $\Phi(\mathfrak{m}^2)\subseteq P(1,m+1)$ it holds that $\gin_{\prec}(\mathfrak{m}^2)_2\subseteq \Mon(1,m)_2$. As $\# \Mon(1,m)=\binom{m+1}{2}$ we conclude that $$
\gin_{\prec}(\mathfrak{m}^2)_2=\Mon(1,m)_2.
$$
The claim follows. 
\end{proof}

Finally, we can provide the proof of \Cref{thm:gin}:

\begin{proof}[Proof of \Cref{thm:gin}]
%\noindent {\sf Proof of \Cref{thm:gin}:}
By \Cref{rem:h-vector} the Hilbert series of $\K[\Delta_{\bal}(U)]$ is given by
$$
\Hilb(\K[\Delta_{\bal}(U)],t)=\frac{a_0+a_1t+\cdots +a_dt^d}{(1-t)^d},
$$
where $a_i=\#\{u\in U~:~\deg(u)=i\}$. On the other hand,
$$
\Hilb(P(d,\mb)/I(U),t)=a_0+a_1 t+\cdots +a_d t^d.
$$
Hence, if we consider $I(U)$ as ideal in $P(d,\mb+(1,\ldots,1))$, then $\K[\Delta_\bal(U)]$ and $P(d,\mb+(1,\ldots,1))/I(U)$ have the same Hilbert series. It therefore suffices to show that $G(I(U))\subseteq \gin_{\prec}(I_{\Delta_{\bal}(U)})$. 

Let $\phi\in G$ such that $\ini_\prec\phi(I_{\Delta_{\bal}(U)})=\gin_{\prec}(I_{\Delta_{\bal}(U)})$. Let $u=u_1\cdots u_d\in G(I(U))$, with $u_i\in \Mon(1,m_i)$ for $1\leq i\leq d$. We will show the following claim.\\
{\sf Claim:} For $1\leq i\leq d$ there exist monomials $v_{j,1},\ldots,v_{j,k_j}\in \{v\in P(1,m_j)~:~v\leq_s u_j\}$ and $a_{j,1},\ldots,a_{j,k_j}\in \K$ such that 
\begin{equation}\label{eq:claim}
\ini_\prec\phi(a_{j_1}\Phi(v_{j,1})+\cdots +a_{j,k}\Phi(v_{j,k_j}))=u_j.
\end{equation}
The claim then follows in the same way as in the proof of Theorem 1.10 in \cite{Murai-ColorStable}, where instead of \cite[Lemma 1.7]{Murai-ColorStable} we use \Cref{Hilfslemma}. To show the claim it suffices to distinguish two cases. Indeed, by assumption on $U$, we either have that $u$ is color-squarefree, in which case each $u_j$ is either $1$ or equal to a variable $x_{j,\ell}$, or, the monomial $u$ is of the form $x_{j,k}\cdot x_{j,\ell}$ for some $1\leq j\leq d$, $1\leq k,\ell\leq m_j$. 

\emph{Case 1.} $u_j=x_{j,\ell}$ for some $1\leq \ell\leq m_j$. \\
In this situation, the only monomials that are smaller than or equal to $u_j$ w.r.t. the ordering $\leq_s$ are the variables $x_{j,1},\ldots,x_{j,\ell}$ and \eqref{eq:claim} is equivalent to showing that there exist $b_1,\ldots,b_\ell\in \K$ such that 
\begin{equation}\label{eq:case1}
\ini_\prec(b_{1}\phi(x_{j,1})+\cdots +b_{\ell}\phi(x_{j,\ell}))=x_{j,\ell}.
\end{equation}
The initial monomial with respect to revlex order induced by $x_{j,m_j}\prec \cdots x_{j,1}$ of the linear form $b_{1}\phi(x_{j,1})+\cdots +b_{\ell}\phi(x_{j,\ell})$ is of the form $b\cdot x_{j,k}$ where $1\leq k\leq m_j$ is the minimal index such that the coefficient of $x_{j,k}$ in $b_{1}\phi(x_{j,1})+\cdots +b_{\ell}\phi(x_{j,\ell})$ does not vanish. In particular, to show that we can choose $b_1,\ldots,b_\ell\in \K$ such that \eqref{eq:case1} holds we need to show that we can choose $b_1\ldots,b_\ell\in \K$ such that the coefficients of $x_{j,1},\ldots,x_{j,\ell-1}$ do vanish and the one of $x_{j,\ell}$ does not. For the first condition we need to solve a linear system of $\ell -1$ equations, which, as $\phi$ can be chosen to be generic, has a solution. Moreover, since we have one additional degree of freedom, we can guarantee that $x_{j,\ell}$ appears with non-zero coefficient.

\emph{Case 2.} $u=x_{j,k}\cdot x_{j,\ell}$ for some $1\leq j\leq d$, $1\leq k,\ell\leq m_j$. \\
As $U$ is color-squarefree shifted, we have that $\mathfrak{m}_j^2=(x_{j,1},\ldots,x_{j,m_j})^2\subseteq I(U)$. It follows from \Cref{lem:degree2} that there exists a polynomial $f\in \Phi(I(U))$ such that $\ini_{\prec}(f)=u$. This shows the claim.
%\qed
\end{proof}

Once more, we illustrate \Cref{thm:gin} using our running example \Cref{RunningExample}.
\begin{example} 
    \label{exa:gin of running}
We consider the order ideal from \Cref{RunningExample}. We have already seen that the squarefree minimal generators of $I(U)$ are given by:
\begin{align*}
&x_{1,1}x_{1,2},\quad x_{2,1}x_{2,2},\quad x_{3,1}x_{3,2},\quad x_{1,1}x_{2,1},\quad x_{1,1}x_{2,2},\quad x_{1,1}x_{3,1},\\
& x_{1,1}x_{3,2},\quad x_{1,2}x_{2,1},\quad x_{1,2}x_{3,1},\quad x_{2,1}x_{3,1},\quad x_{2,1}x_{3,2},\quad x_{2,2}x_{3,1},
\end{align*}
As $I(U)$ contains all squares of the variables, \Cref{thm:gin} tells us that $\gin_\prec(I_{\Delta_\bal(U)})$ is generated as an ideal in $P(3,(3,3,3))$ by the monomials listed above and the squares $x_{i,j}^2$, where $1\leq i\leq 3$ and $1\leq j\leq 2$.
\end{example}
We conclude this section with a brief comparison of order ideals that are color-squarefree shifted and color-squarefree shifted across colors, respectively. In particular, we study how each property is reflected on the level of the corresponding balanced squeezed complexes, their Stanley-Reisner ideals and their multigraded gins.
The next proposition achieves Fthis for color-squarefree shifted monomial order ideals.

\begin{proposition}\label{prop:comparison}
Let $U\subseteq \Mon_{\cs}(d,\mb)$ be a color-squarefree monomial order ideal with $\mf_1^2+\cdots +\mf_d^2\subseteq I(U)$. Then the following conditions are equivalent:
\begin{itemize}
\item[(i)] $U$ is color-squarefree shifted.
\item[(ii)] $I(U)$ and $\gin_\prec(I_{\Delta_\bal(U)})$ are strongly color-stable.
\item[(iii)] $\Delta_\bal(U)$ is color-shifted.
\item[(iv)] $I_{\Delta_\bal(U)}\cap P(d,\mb)$ is strongly color-stable.
\end{itemize}
\end{proposition}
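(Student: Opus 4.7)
The plan is to establish the three equivalences (i)$\Leftrightarrow$(ii), (i)$\Leftrightarrow$(iii), and (i)$\Leftrightarrow$(iv), which together yield the four-way equivalence. The first two are essentially already done: \Cref{lem:order ideal vs ideal} gives that $U$ is color-squarefree shifted if and only if $I(U)$ is strongly color-stable, while \Cref{thm:gin} identifies $\gin_\prec(I_{\Delta_\bal(U)})$ with $I(U)P(d,\mb+(1,\ldots,1))$ under the shiftedness assumption; since strong color-stability is preserved under the corresponding flat extension of scalars, this handles both directions of (i)$\Leftrightarrow$(ii). The equivalence (i)$\Leftrightarrow$(iii) is precisely \Cref{lem:compareIdealAndComplex}.

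For (i)$\Leftrightarrow$(iv), my starting point is to use \Cref{thm:Stanley-Reisner ideal} to describe the intersection
$$J := I_{\Delta_\bal(U)}\cap P(d,\mb)$$
as the ideal in $P(d,\mb)$ generated by the squarefree monomials of $I(U)$, since the ``balancing'' generators $x_{i,\ell}x_{i,m_i+1}$ of $I_{\Delta_\bal(U)}$ drop out upon intersecting with $P(d,\mb)$. For (i)$\Rightarrow$(iv), $I(U)$ is strongly color-stable by \Cref{lem:order ideal vs ideal}. Given a squarefree $v\in J$ and a color swap $x_{k,\ell}\mapsto x_{k,j}$ with $j<\ell$ and $x_{k,j}\nmid v$, I would pick a squarefree generator $g\in I(U)$ with $g\mid v$ and split into two cases: if $x_{k,\ell}\nmid g$, then $g$ continues to divide $(x_{k,j}/x_{k,\ell})v$; if $x_{k,\ell}\mid g$, then strong color-stability of $I(U)$ yields $(x_{k,j}/x_{k,\ell})g\in I(U)$, which is squarefree (since $x_{k,j}\nmid g$, as $g\mid v$) and divides $(x_{k,j}/x_{k,\ell})v$. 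In both cases the swapped monomial lies in $J$.

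For (iv)$\Rightarrow$(i), I would argue contrapositively. If $U$ is not color-squarefree shifted, there exist a color-squarefree monomial $x_{k,\ell}u\in U$ (with $u$ not involving color $k$) and an index $\ell<j\le m_k$ with $x_{k,j}u\notin U$; then $x_{k,j}u$ is color-squarefree, hence squarefree, and lies in $I(U)$, so $x_{k,j}u\in J$. Applying (iv) to the swap $x_{k,j}\mapsto x_{k,\ell}$ (which is valid because $\ell<j$ and $x_{k,\ell}\nmid u$) would force $x_{k,\ell}u\in J\subseteq I(U)$, contradicting $x_{k,\ell}u\in U$ via the disjointness $U\cap I(U)=\emptyset$ (itself a consequence of $U$ being a monomial order ideal). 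The main obstacle I anticipate lies in (i)$\Rightarrow$(iv): one must read strong color-stability of the squarefree monomial ideal $J$ in its natural squarefree sense (applied only to squarefree $v$ with the target variable $x_{k,j}$ absent from $v$), and one must use the hypothesis $\mf_1^2+\cdots+\mf_d^2\subseteq I(U)$ to ensure that in the case $x_{k,\ell}\mid g$ above, the produced swap is still a squarefree witness in $I(U)$ rather than an accidentally non-squarefree monomial.
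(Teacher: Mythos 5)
Your handling of (i)$\Leftrightarrow$(ii) and (i)$\Leftrightarrow$(iii) matches the paper exactly: both go through \Cref{lem:order ideal vs ideal} (together with the observation, via \Cref{thm:gin}, that $I(U)$ and $\gin_\prec(I_{\Delta_\bal(U)})$ share minimal generators) and \Cref{lem:compareIdealAndComplex}, respectively. Where you diverge is the last equivalence: the paper disposes of (ii)$\Leftrightarrow$(iv) in a single line as ``follows from \Cref{thm:gin},'' while you give a direct combinatorial argument for (i)$\Leftrightarrow$(iv) via the identification $J := I_{\Delta_\bal(U)}\cap P(d,\mb) = \langle u : u\in I(U)\text{ squarefree}\rangle$ and an explicit case analysis on generators. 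Your route is more detailed and, importantly, more honest about a real definitional subtlety that the paper glosses over: $J$ is generated by squarefree monomials and so can never satisfy the \emph{verbatim} definition of ``strongly color-stable'' from \Cref{sect:prel1} once some $m_i\geq 2$ (the generator $x_{i,1}x_{i,2}\in J$ would force $x_{i,1}^2\in J$, which is false); (iv) only makes sense with the definition read in its squarefree form, allowing the swap $x_{k,\ell}\mapsto x_{k,j}$ only when the result stays squarefree, exactly as you do. Under that reading your argument is correct; the paper's terse appeal to \Cref{thm:gin} for (ii)$\Leftrightarrow$(iv) is cleaner only if one reads (iv) as concerning $\gin_\prec(I_{\Delta_\bal(U)})\cap P(d,\mb)$ (which equals $I(U)$ by \Cref{thm:gin}), and your version side-steps that ambiguity. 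One small correction: in your closing sentence you suggest that $\mf_1^2+\cdots+\mf_d^2\subseteq I(U)$ is what keeps the swapped generator $(x_{k,j}/x_{k,\ell})g$ squarefree in the case $x_{k,\ell}\mid g$; in fact you have already secured squarefreeness from $x_{k,j}\nmid g$ (a consequence of $g\mid v$ and $x_{k,j}\nmid v$), so the squares hypothesis is not what does the work there.
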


\begin{proof}
We note that since $I(U)$ and $\gin_\prec(I_{\Delta_\bal(U)})$ have the same set of minimal generators, $I(U)$ is strongly color-stable if and only if $\gin_\prec(I_{\Delta_\bal(U)})$ is.

The equivalence of $(i)$ and $(ii)$ follows from \Cref{lem:order ideal vs ideal} and .

$(i)\Leftrightarrow (iii)$ was shown in \Cref{lem:compareIdealAndComplex}. 

The equivalence $(ii)\Leftrightarrow (iv)$ follows from \Cref{thm:gin}
\end{proof}

Similarly, we have the following result.

\begin{proposition} 
      \label{prop:squeezed across colors}
Let $U\subseteq \Mon_{\cs}(d,\mb)$ be a color-squarefree monomial order ideal. For $1\leq i\leq d$ let $\hat{V}_i=V_i\setminus \{m_i+1\}$ and $\hat{V}=\bigcup_{i=1}^d\hat{V}_i$. Then the following conditions are equivalent:
\begin{itemize}
\item[(i)] $U$ is color-squarefree shifted across colors.
\item[(ii)] $I(U)$ and $\gin_\prec(I_{\Delta_\bal(U)})$ are strongly color-stable across colors.
\item[(iii)] The induced subcomplex $\Delta_\bal(U)_{\hat{V}}$ of $\Delta_\bal(U)$ on vertex set $\hat{U}$ is color-shifted across colors.
\item[(iv)] $I_{\Delta_\bal(U)}\cap P(d,\mb)$ is strongly color-stable across colors.
\end{itemize}
\end{proposition}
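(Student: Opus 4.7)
The plan is to mirror the proof of Proposition \ref{prop:comparison} almost verbatim, establishing the four equivalences using the same three tools---Lemma \ref{lem:order ideal vs ideal}, Lemma \ref{lem:compareIdealAndComplex}, and Theorem \ref{thm:gin}---each upgraded to its ``across colors'' variant. Throughout, the containment $\mf_1^2+\cdots+\mf_d^2\subseteq I(U)$ is automatic because $U\subseteq\Mon_{\cs}(d,\mb)$.

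For (i)$\,\Leftrightarrow\,$(ii), I would first apply Lemma \ref{lem:order ideal vs ideal} directly, obtaining that $U$ is color-squarefree shifted across colors if and only if $I(U)$ is strongly color-stable across colors. To bring $\gin_\prec(I_{\Delta_{\bal}(U)})$ into the equivalence, note that either of (i) or (ii) in particular forces $U$ to be color-squarefree shifted, so Theorem \ref{thm:gin} applies and yields $\gin_\prec(I_{\Delta_{\bal}(U)}) = I(U)\cdot P(d,\mb+(1,\ldots,1))$. Since this extension preserves the set of monomial minimal generators and ``strongly color-stable across colors'' is a generator-level property concerning only color-squarefree monomials, it transfers between the two ideals.

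For (i)$\,\Leftrightarrow\,$(iii), my strategy is to run an across-colors analogue of Lemma \ref{lem:compareIdealAndComplex} on the induced subcomplex $\Delta_{\bal}(U)_{\hat V}$. Every face of $\Delta_{\bal}(U)_{\hat V}$ has the form $F=\{j_1^{(i_1)},\ldots,j_s^{(i_s)}\}$ for some $u=x_{i_1,j_1}\cdots x_{i_s,j_s}\in U$, and an across-colors exchange $j^{(i)}=j_r^{(i_r)}\to k^{(\ell)}$ with $\ell>i$ produces a balanced set precisely when $\ell\notin\{i_1,\ldots,i_s\}$, in which case it corresponds to $u'=(u/x_{i,j})\cdot x_{\ell,k}\in\Mon_{\cs}(d,\mb)$ with $x_{i,j}\preceq x_{\ell,k}$. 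Under this dictionary, the defining clauses of ``color-squarefree shifted across colors'' for $U$ and of ``color-shifted across colors'' for $\Delta_{\bal}(U)_{\hat V}$ match up termwise; the same-color exchanges are handled exactly as in Lemma \ref{lem:compareIdealAndComplex}. Both implications then run off this translation.

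For (ii)$\,\Leftrightarrow\,$(iv), I would invoke Theorem \ref{thm:Stanley-Reisner ideal} to identify the minimal generators of $I_{\Delta_{\bal}(U)}\cap P(d,\mb)$ with the squarefree minimal generators of $I(U)$, and then argue that since the across-colors exchange property is a condition on color-squarefree (hence squarefree) monomials, it holds for $I(U)$ if and only if it holds for this squarefree part. The main (mild) obstacle will be bookkeeping: keeping the two orientation conventions straight---variables ascending in $\preceq$ when shifting $U$, versus vertex indices ascending when color-shifting the complex---and verifying that the ``validity'' constraints built into the definitions on each side (preservation of $\Mon_{\cs}(d,\mb)$ versus of balancedness) correspond to each other under the face-monomial dictionary.
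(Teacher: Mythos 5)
Your proposal tracks the paper's (quite terse) proof: the paper declares that $(i)\Leftrightarrow(ii)\Leftrightarrow(iv)$ follow exactly as in Proposition~\ref{prop:comparison} (i.e., via Lemma~\ref{lem:order ideal vs ideal} and Theorem~\ref{thm:gin}), and for $(i)\Leftrightarrow(iii)$ it observes only that the faces of $\Delta_\bal(U)_{\hat V}$ are in bijection with the monomials of $U$, under which the two across-colors conditions translate directly. Your face--monomial dictionary $\{j_1^{(i_1)},\ldots,j_s^{(i_s)}\}\leftrightarrow x_{i_1,j_1}\cdots x_{i_s,j_s}$ is precisely that bijection, so this is essentially the same argument, just spelled out in more detail.
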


\begin{proof}
The equivalence of $(i)$, $(ii)$ and $(iv)$ can be shown in exactly the same way as in the proof of \Cref{prop:comparison}. 
For the equivalence of $(i)$ and $(iii)$ it suffices to note that, since $U$ is an order ideal, the faces of $\Delta_\bal(U)_{\hat{U}}$ are in one-to-one-correspondence with the generators of $U$. 
\end{proof}

%%%%%%%%%%%%%%%%%%%%%%%

\section{Graded Betti Numbers} 
   \label{sec:Betti} 
The focus in this section lies on the multigraded Betti numbers of a particular class of balanced squeezed complexes as well as of more general classes of ideals. 

First, we identify a class of color-square ideals whose graded Betti numbers can be read off directly from their minimal generators. In particular, this class includes all color-squarefree ideals that are strongly color-stable across colors. 

Second, we provide another justification for the term ``balanced squeezed complexes''.  We show that every balanced squeezed complex associated to a color-squarefree order ideal that is shifted across colors has the same graded Betti numbers as the complex obtained by color-shifting it (with respect to a suitable order on the vertices) in the sense of \cite{Babson:Novik}. This is a consequence of a more general result, which says that  any balanced squeezed complex to a color-squarefree shifted order ideal and the multigraded generic initial ideal (with respect to a suitable order) of its Stanley-Reisner ideal have the same $\Z^d$-graded Betti numbers.

Throughout this section we use the following order on variables in $P(d, \mb)$ or $P(d, \mb + (1,\ldots,1))$. We define  $x_{i, j} > x_{k, l}$ if $i < k$ or $i = k$ and $j < l$. 
Observe that this order satisfies the assumption made in \Cref{sec:gins}, and thus the results of this section are applicable. We also remark that this is the reverse order of the order $\preceq$ defined at the beginning of \Cref{sect:prel1}.

Now we consider the following class of color-squarefree monomial ideals.

\begin{definition}
    \label{def:color-sq-free mon ideal}
A monomial ideal $I\subseteq P(d,\mb)$ is called  \emph{color-squarefree} (with respect to the above order of variables) if each of its monomial minimal generators is color-squarefree. The ideal $I$ is said to be \emph{color-squarefree  stable across colors} (with respect to the above order of variables) if it is color-squarefree and  one has: 
\begin{itemize}

\item[(i)]If  $x_{k,\ell}\cdot u\in I$ is a color-squarefree monomial  and $j<\ell$, then $x_{k,j}\cdot u\in I$; \; and 

\item[(ii)] If $x_{k, \ell}$ is the smallest variable dividing a color-squarefree monomial $u \in I$, $x_{i, j} > x_{k, \ell}$ and $x_{i, j} \cdot \frac{u}{x_{k, \ell}}$ is color-squarefree,  then $x_{i, j} \cdot \frac{u}{x_{k, \ell}} \in I$. 
\end{itemize}
\end{definition}

\begin{remark}
(i) Every color-squarefree monomial ideal that is  strongly 
 color-stable across colors is color-squarefree  stable across colors, but the converse is not true. 

(ii) If $m_1 = \cdots = m_d = 1$, then a monomial ideal $I\subseteq P(d,\mb)$ is  color-squarefree  stable across colors if and only if $I$ is squarefree stable  in the standard sense. 
\end{remark}

We want to show that any color-squarefree stable ideal has linear quotients. To this end we need some notation. 

\begin{definition}
    \label{def:smaller var to a monomial}
For a color-squarefree  monomial $u=x_{i_1,j_1}\cdots x_{i_s,j_s} \in P(d, \mb)$ with $i_1 < \cdots < i_s$, let  $\min (u) = x_{i_s, j_s}$ be the smallest variable dividing $u$. Furthermore, set  
\[
\sm (u) = \{x_{p, q} \; : \; p \in [i_s] \setminus \cSupp(u) \text{ and } q \in [m_p]  \text{ or } p = i_k \text{ for some } k \in [s] \text{ and } q \in [j_k-1]\}. 
\]
\end{definition}

\begin{proposition}
    \label{prop:lin quotient}
Consider any color-squarefree stable ideal $I \neq 0$ in $P(d, \mb)$. Among the monomial minimal generators of $I$ with maximum degree let $v$ be the smallest one in the reverse lexicographic order. Let $J \subseteq P(d, \mb)$ be the ideal generated by the monomial minimal generators of $I$ other than $v$. Then $I = (J, v)$, $J$ is color-squarefree stable and 
\begin{align}
    \label{eq:lin quotient}
J : v = (\sm (v)).    
\end{align}
\end{proposition}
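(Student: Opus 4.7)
Plan. Since $I = (J, v)$ is immediate, the content is threefold: (1) $\sm(v) \subseteq J : v$, (2) $J$ is color-squarefree stable, and (3) $J : v \subseteq (\sm(v))$. I would treat them in that order.

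For (1), I would verify each $y \in \sm(v)$ lies in $J : v$. If $y = x_{i_k, q}$ with $q < j_k$, condition (i) applied to $v \in I$ produces $y \cdot v / x_{i_k, j_k} \in I$; this is a color-squarefree monomial of degree $\deg v$ distinct from $v$, so it is either a minimal generator in $J$ or is divisible by a minimal generator of strictly smaller degree (which is automatically in $J$, as $v$ has maximum degree). In either case $yv$ is a multiple of an element of $J$, so $yv \in J$. For $y = x_{p, q}$ with $p \in [i_s] \setminus \cSupp(v)$, the identical argument applies, using condition (ii) on $v$ with $\min(v) = x_{i_s, j_s}$ and $y > \min(v)$.

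For (2), conditions (i) and (ii) for $J$ need only be verified when the exchange could a priori produce $v$ itself. For (i), suppose $x_{k, \ell} u \in J$ with $j < \ell$ and $x_{k, j} u = v$. Then $x_{k, \ell} u = x_{k, \ell} v / x_{k, j}$ has degree $\deg v$ and is strictly smaller than $v$ in reverse lexicographic order, so it cannot be a minimal generator of $I$; hence some minimal generator $g$ of strictly smaller degree divides it. If $g$ does not involve $x_{k, \ell}$, then $g$ already divides $v$, contradicting minimality. If $g$ does involve $x_{k, \ell}$, applying (i) to $g$ yields a proper sub-monomial of $v$ in $I$ of degree less than $\deg v$, again contradicting minimality of $v$. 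So this problematic scenario never occurs, and (i) holds for $J$. The argument for (ii) is analogous.

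For (3), using $J : v = (\{u/\gcd(u, v) : u \in G(I),\, u \neq v\})$ as a monomial ideal, it suffices to show that for every minimal generator $u \neq v$, the monomial $u_2 := u/\gcd(u, v)$ has a variable in $\sm(v)$. Assume for contradiction that every variable of $u_2$ is \emph{bad}, meaning either of color $> i_s$ or of the form $x_{i_k, q_k}$ with $q_k > j_k$. If $\deg u = \deg v$, minimality of $v$ gives $u > v$ in revlex, so the smallest variable in the symmetric difference $u \triangle v$ must lie in $v \setminus u$; but the bad structure of $u_2$ places this smallest variable in $u_2$ instead (any color-$>i_s$ variable is smaller in the order than all variables of colors in $\cSupp(v)$, and within a common color a bad variable has larger index, hence is smaller in the order). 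If $\deg u < \deg v$, apply condition (i) iteratively to replace each bad index $x_{i_k, q_k}$ by $x_{i_k, j_k}$, producing $u^\dagger \in I$ of the same degree; then apply condition (ii) iteratively, at each step swapping $\min(u^\dagger)$ (which has color $> i_s$) for some $x_{i, j_i}$ with $i \in \cSupp(v) \setminus \cSupp(u^\dagger)$ (nonempty since $\deg u^\dagger < \deg v$). Each swap preserves degree and strictly increases $|\cSupp(u^\dagger) \cap \cSupp(v)|$; after finitely many steps, $u^\dagger$ becomes a proper sub-monomial of $v$ in $I$, contradicting minimality of $v$.

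The main obstacle is the case $\deg u < \deg v$ in (3), where one must interleave both stability conditions and verify that the iterative process terminates with a monomial all of whose variables actually coincide with the corresponding ones in $v$, rather than some intermediate monomial still supported outside $v$.
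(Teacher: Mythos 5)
Part (1) follows the paper almost exactly. Part (3) takes a genuinely different route: the paper restricts attention to those $u\in G(J)$ for which $u/\gcd(u,v)$ is a minimal generator of $J:v$, and in its Case 1 derives, when a variable $>\min(v)$ divides $u$ but not $v$ and is not in $\sm(v)$, a contradiction with the minimality of $u/\gcd(u,v)$ as a colon-generator rather than with the minimality of $v$ itself. You instead prove the stronger claim that for \emph{every} $u\in G(J)$ the monomial $u/\gcd(u,v)$ meets $\sm(v)$, splitting on $\deg u=\deg v$ versus $\deg u<\deg v$. Both routes work; yours buys a cleaner logical structure at the cost of a more delicate revlex analysis in the equal-degree case, where you should make explicit that $u/\gcd(u,v)$ and $v/\gcd(u,v)$ have equal degree and that, once a color $>i_s$ is ruled out, the color support of the former is contained in $\cSupp(v)$, forcing a color-by-color matching and placing the smallest variable of the symmetric difference in $u$, not $v$. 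Your $\deg u<\deg v$ step is essentially the paper's Case 2.

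The genuine gap is in part (2). You assert that stability of $J$ need only be verified ``when the exchange could a priori produce $v$ itself.'' That is not the only obstruction: the exchanged monomial $x_{k,j}u\in I$ could have degree strictly greater than $\deg v$ and lie in $I\setminus J$, i.e.\ be a strict multiple of $v$ whose only minimal-generator divisor is $v$. Your contradiction argument uses $x_{k,j}u=v$ in an essential way when you conclude that $x_{k,j}\cdot g/x_{k,\ell}$ is a proper divisor of $v$; this inference fails once $x_{k,j}u$ is a strict multiple of $v$. The repair applies the same idea directly to a generator: from $x_{k,\ell}u\in J$ choose $g\in G(I)\setminus\{v\}$ dividing $x_{k,\ell}u$. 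If $g\mid u$ then $g\mid x_{k,j}u$ and $x_{k,j}u\in J$. Otherwise $x_{k,\ell}\mid g$, and $g':=x_{k,j}\cdot g/x_{k,\ell}\in I$ divides $x_{k,j}u$ with $\deg g'=\deg g\le\deg v$; so either some generator $\neq v$ divides $g'$, giving $x_{k,j}u\in J$, or $v\mid g'$ forces $v=g'$ and hence $g=x_{k,\ell}\cdot v/x_{k,j}$, a maximum-degree generator that is revlex-smaller than $v$, contradicting the choice of $v$.
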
 

\begin{proof}
Rules (i) and (ii) allow us to replace a color-squarefree monomial $u$ in $I$ by a color-squarefree monomial with the same degree that is larger than $u$. Hence $J$ is color-squarefree stable by the choice of $v$. It remains to establish Equation \eqref{eq:lin quotient}. Write $v=x_{i_1,j_1}\cdots x_{i_s,j_s}$ with $i_1 < \cdots < i_s$. 

We first show the containment $\sm (v) \subseteq J : v$. To this end consider any $x_{k, \ell} \in \sm (v)$. If $k \in [i_s] \setminus \cSupp(u)$, then $k < i_s$, and so $x_{k, \ell} > x_{i_s, j_s}$. Moreover, 
$k \notin \cSupp (v)$ implies that the monomial $v' = x_{k, \ell} \cdot \frac{v}{x_{i_s, j_s}}$ is color-squarefree. It also is greater than $v$, and so it is in $I$ by stability. It follows that $v'$ is in $J$, and 
hence $x_{k, \ell}$ is in $J : v$, as desired. 

If $k = i_p$ for some $p \in [s]$ and $\ell <j_p$, then $x_{k, \ell} > x_{i_p, j_p}$ and $ x_{i_p, \ell} \cdot \frac{v}{x_{i_p, j_p}} > v$ is color-squarefree. Hence, we conclude as above that $x_{k, \ell}$ is in 
$J : v$. 

Now we establish the reverse inclusion $J : v \subseteq (\sm (v))$. Observe that $J : v$ is generated by monomials $\frac{u}{\gcd (u, v)}$, where $u$ is a monomial minimal generator of $J$. Hence it suffices to show: If $\frac{u}{\gcd (u, v)}$ is a minimal generator of $J : v$ for some   monomial minimal generator $u$ of $J$, then $\frac{u}{\gcd (u, v)}$ is in $(\sm (v))$. 

To this end let $u$ be any minimal monomial generator  of $J$ such that $\frac{u}{\gcd (u, v)}$ is a minimal generator of $J : v$. We consider two cases. 
\smallskip 

\emph{Case 1.} Assume there is a variable $x_{k, \ell} > x_{i_s, j_s} = \min (v)$ that divides $u$, but not $v$. Then $x_{k, \ell}$ divides $\frac{u}{\gcd (u, v)}$. If $x_{k, \ell}$ is in $(\sm (v))$, then 
$\frac{u}{\gcd (u, v)}$ is in $(\sm (v))$, as desired. 
Otherwise, $x_{k, \ell} > x_{i_s, j_s}$ forces 
$k = i_p$ for some $p \in [s]$ and $\ell \ge j_p$. Note that $\ell \neq j_p$ because we assumed that 
$x_{k, \ell}$ does not divide $v$. This gives $\ell > j_p$. Hence, by stability, $u' = x_{k, j_p} \cdot \frac{u}{x_{k, \ell}}$ is in $J$ and $\gcd (u', v) = x_{k, j_p} \cdot \gcd (u, v)$, and so 
\[
\frac{u'}{\gcd (u', v)} = \frac{u}{x_{k, \ell} \cdot \gcd (u, v)} \in J : v. 
\]
This shows that $\frac{u}{\gcd (u, v)}$ is not a minimal generator of $J : v$, a contradiction. 
\smallskip 

\emph{Case 2.} Assume that every variable $x_{k, \ell} > x_{i_s, j_s} = \min (v)$ that divides $u$ also divides $v$. Notice that this assumption is also satisfied for $x_{k, \ell} = \min (v)$. 

Suppose first that $\min (u) \ge \min (v)$. Then the assumption for this case gives that $u$ divides $v$, a contradiction to the fact that $v$ is a minimal generator of $I$. Hence it remains to consider the case where $\min (v) > \min (u) = x_{k, \ell}$. By the choice of $v$ with respect to the reverse lexicographic order, this implies $\deg u < \deg v$. Hence, there is a divisor $x_{i_p, j_p}$ of $v$ with $i_p \notin \cSupp (u)$. Thus, $x_{i_p, j_p} \cdot u$  is color-squarefree. 
Since $x_{i_p, j_p} \ge \min (v) > \min (u)$, stability gives $u' = x_{i_p, j_p} \cdot \frac{u}{\min (u)} \in J$.  Observe that $\min (u') > \min (u)$ and every variable that divides $u'$ and is greater than $\min (v)$ divides $v$. If $\min (v)$ is still greater than $\min (u')$, then we repeat the previous step until we get a monomial $\tilde{u} \in J$ with $\min (v) \le \min(\tilde{u})$ and the property that every variable that divides $\tilde{u}$ and is greater than $\min (v)$ divides $v$. As above, it follows that $v$ is not a minimal generator of $I$, a contradiction. 
\end{proof}

Note that $P(d, \mb)$ admits a grading by $\Z^{\mb} = \Z^{m_1} \times \cdots \times \Z^{m_d}$, where the degree of each variable $x_{i, j}$ is the corresponding standard basis vector. For a subset $\sigma$ of variables, we set 
\[
\deg \sigma = \sum_{x_{i, j} \in \sigma} \deg x_{i, j}. 
\]

\begin{corollary}
    \label{cor:Betti numbers color-sqfree stable} 
If  $I \subset P(d, \mb))$ is a color-squarefree stable ideal, then one has for any integer $j \ge 0$, an isomorphism of $\Z^{\mb}$-graded modules 
\[
\Tor^j_{P(d, \mb)} (P(d, \mb)/I, \K) \cong \bigoplus_{u \in G(I)} \bigoplus_{\sigma \subset  \sm(u)} \K(-\deg u - \deg \sigma). 
\]
\end{corollary}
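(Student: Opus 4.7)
My plan is to exhibit $I$ as having linear quotients with colon ideals generated by variables, and then read off the $\Z^{\mb}$-graded Betti numbers via an iterated mapping cone built from Koszul complexes.

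First, I would iterate \Cref{prop:lin quotient}: take $u_r \in G(I)$ to be the smallest minimal generator of maximum degree, so that $I^{(1)} := (G(I)\setminus\{u_r\})$ is again color-squarefree stable and $I^{(1)} : u_r = (\sm(u_r))$; apply the proposition again to $I^{(1)}$ to obtain $u_{r-1}$; and continue until $I^{(r)} = (0)$. This yields an ordering $u_1, \ldots, u_r$ of $G(I)$ such that each $I_i := (u_1, \ldots, u_i)$ is color-squarefree stable and
$$I_{i-1} : u_i = (\sm(u_i)) \quad \text{for all } i = 1, \ldots, r,$$
so $I$ has linear quotients with all colon ideals generated by (subsets of) variables.

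Next, for each $i$ I consider the $\Z^{\mb}$-graded short exact sequence
$$0 \longrightarrow \bigl(P/(\sm(u_i))\bigr)(-\deg u_i) \xrightarrow{\,\cdot u_i\,} P/I_{i-1} \longrightarrow P/I_i \longrightarrow 0,$$
where $P = P(d, \mb)$. Since $(\sm(u_i))$ is generated by variables, the Koszul complex on $\sm(u_i)$ is a minimal $\Z^{\mb}$-graded free resolution of $P/(\sm(u_i))$, whose $k$-th free module is $\bigoplus_{\sigma \subseteq \sm(u_i),\,|\sigma|=k} P(-\deg \sigma)$. Arguing by induction on $i$ and forming the mapping cone of a lift of multiplication by $u_i$ from the shifted Koszul complex to an already-built resolution of $P/I_{i-1}$, I obtain a $\Z^{\mb}$-graded free resolution of $P/I$ whose $j$-th term is
$$\bigoplus_{i=1}^{r}\, \bigoplus_{\substack{\sigma \subseteq \sm(u_i)\\ |\sigma|=j-1}} P(-\deg u_i - \deg \sigma),$$
which matches the claimed formula (with $\sigma$ of cardinality $j-1$ in homological degree $j$).

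The main obstacle is verifying minimality of this iterated mapping cone, without which the construction yields only upper bounds on the multigraded Betti numbers. Minimality will follow from the fact that every colon ideal $(\sm(u_i))$ is generated by variables: the Koszul differentials have entries only in $\sm(u_i)$, and a comparison lift of $\cdot u_i$ between the two resolutions can be chosen with all entries in the graded maximal ideal $\mf$, since the obstruction at each stage is a cycle in a Koszul complex on variables and can be bounded by a chain with variable entries. Consequently, no unit entries appear in the mapping-cone differential, and no ranks cancel. This is the standard minimality criterion for iterated mapping cones associated to ideals with variable linear quotients. Granting minimality, tensoring the resulting minimal $\Z^{\mb}$-graded free resolution of $P/I$ with $\K$ over $P$ collapses every differential and produces the claimed decomposition of $\Tor^j_P(P/I, \K)$.
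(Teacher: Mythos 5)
Your strategy --- iterating \Cref{prop:lin quotient} to exhibit $I$ as having linear quotients whose colon ideals are generated by variables, and then reading off the multigraded Betti numbers from the resulting iterated mapping cone built from Koszul complexes --- is precisely the route the paper takes: its one-line proof invokes \Cref{prop:lin quotient} together with \cite[Lemma~1.5]{HT}, and your argument amounts to reproving that lemma of Herzog and Takayama in this setting.

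The one place where your write-up is not quite right is the justification of minimality. You assert that the comparison lift of $\cdot u_i$ can be chosen with entries in $\mf$ because ``the obstruction at each stage is a cycle in a Koszul complex on variables,'' but the lifting takes place in the inductively built resolution of $P/I_{i-1}$, which is itself an iterated mapping cone rather than a Koszul complex, so the stated reason does not apply. The argument that actually works is a degree count made available by the ordering you constructed. Since at each step $u_i$ is chosen to be the smallest revlex element of maximum degree in $I_i$ and $G(I_{i-1}) \subset G(I_i)$, the degrees are non-decreasing, $\deg u_1 \le \cdots \le \deg u_r$. A new Koszul generator contributed at homological degree $j+1$ has total degree $\deg u_i + j$, while any generator of the resolution at homological degree $j$ --- whether from the Koszul part of the current step or, inductively, of the form $\deg u_k + (j-1)$ with $k < i$ --- has total degree at most $\deg u_i + j - 1$. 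No degree-zero entries can therefore occur and the mapping cone is minimal; this is exactly the content of \cite[Lemma~1.5]{HT}. Your parenthetical observation that the corollary's display should carry the constraint $\#\sigma = j-1$ is correct and worth making explicit.
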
 

\begin{proof}
This follows by applying \Cref{prop:lin quotient} and \cite[Lemma 1.5]{HT}. 
\end{proof}

Since the $\Z^{\mb}$-graded Betti numbers determine the $\Z$-graded Betti numbers there is an analogous statement for $\Z$-graded Betti numbers. We leave this to an interested reader. 
\smallskip

Our next goal is to compare the Betti numbers of the Stanley-Reisner ideal to a balanced squeezed  
complex and its multi-graded generic initial ideal. This requires some preparation. 

\begin{lemma}
   \label{lem:an iso}
Consider an ideal $I = (z_1,\ldots,z_m)$ of a polynomial ring $T = \K[z_1,\ldots,z_n, y]$ with $m \le n$. Set $I^{[2]} = (z_i z_j \; : \; 1 \le i < j \le m)$. The $T$-module homomorphism 
\[
R^m \to R/I^{[2]}, \; (f_1,\ldots,f_m)^t \mapsto \left (y \cdot \sum_{j=1}^m z_j f_j \right )\!\!\!\! \mod I^{[2]}
\]
induces an isomorphism of $\Z^{n+1}$-graded $T$-modules 
\[
\bigoplus_{j=1}^m (T/(z_1,\ldots,\hat{z}_j,\ldots,z_m) (-\deg (y z_j)) \to (y I + I^{[2]})/I^{[2]},  
\]
where $\hat{z}_j$ means that the variable $z_j$ is omitted as a generator. 
\end{lemma}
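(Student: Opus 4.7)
The plan is to verify the three standard properties of the induced map: compatibility with the quotients on the source, surjectivity, and injectivity. First I would observe that the given $T$-linear map $\psi\colon T^m \to T/I^{[2]}$ sending the $j$-th standard basis vector $e_j$ to $y z_j \bmod I^{[2]}$ is $\Z^{n+1}$-graded of degree zero once the $j$-th summand of $T^m$ is shifted by $-\deg(y z_j)$. Since each generator $y z_j$ of $y I$ is hit, the image equals $(y I + I^{[2]})/I^{[2]}$, giving surjectivity onto the stated target.

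For the well-definedness on the quotients, I would check for each $j$ and each $i \in [m] \setminus \{j\}$ that $\psi(z_i e_j) = y z_i z_j \equiv 0 \pmod{I^{[2]}}$, which is immediate because $z_i z_j$ is a generator of $I^{[2]}$. Hence $\psi$ factors through the quotient by $(z_1, \ldots, \hat{z}_j, \ldots, z_m)$ in the $j$-th coordinate and descends to a graded map
\[
\bar\psi \colon \bigoplus_{j=1}^m T/(z_1, \ldots, \hat{z}_j, \ldots, z_m)(-\deg(y z_j)) \longrightarrow (y I + I^{[2]})/I^{[2]}.
\]

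The substantive step is injectivity of $\bar\psi$. Suppose $(\bar f_1, \ldots, \bar f_m) \in \ker \bar\psi$, and lift each $\bar f_j$ to $f_j \in T$, so that $y \sum_{j=1}^m f_j z_j \in I^{[2]}$. For a fixed $k \in [m]$ I would reduce modulo $\mathfrak{a}_k := (z_1, \ldots, \hat{z}_k, \ldots, z_m)$. Every generator $z_i z_j$ of $I^{[2]}$ with $1 \le i < j \le m$ has at least one factor distinct from $z_k$, so $I^{[2]} \subseteq \mathfrak{a}_k$ and the right-hand side vanishes; on the left side, each term $f_j z_j$ with $j \neq k$ already lies in $\mathfrak{a}_k$. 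This leaves $y z_k f_k \equiv 0 \pmod{\mathfrak{a}_k}$. But $T/\mathfrak{a}_k \cong \K[z_k, z_{m+1}, \ldots, z_n, y]$ is an integral domain in which $y z_k$ is a nonzerodivisor, so $f_k \in \mathfrak{a}_k$ and hence $\bar f_k = 0$.

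The main obstacle lies in this injectivity argument, and the crucial idea is choosing the right auxiliary ideal: the ideal $\mathfrak{a}_k$ is just large enough to contain $I^{[2]}$ (so the right side dies), yet preserves $z_k$ and $y$ (so the $k$-th coordinate survives and can be read off in a polynomial ring). Once this is set up, the lemma follows without further effort, with the grading shifts matching automatically by the construction of $\psi$.
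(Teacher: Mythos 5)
Your proof is correct, and the injectivity argument takes a genuinely different route from the paper. The paper first observes that the kernel of the map $\alpha$ coincides with the kernel of the analogous map $\beta$ without the extra factor of $y$, and then asserts that $\beta$ is injective because its domain and its image $I/I^{[2]}$ have the same Hilbert series --- a step whose verification is left to the reader. You instead argue coordinate-by-coordinate: for each $k$ you reduce the relation $y\sum_j f_j z_j \in I^{[2]}$ modulo the prime ideal $\mathfrak{a}_k = (z_1,\ldots,\hat z_k,\ldots,z_m)$, observe that $I^{[2]} \subseteq \mathfrak{a}_k$ (every generator $z_i z_j$ with $i<j$ in $[m]$ has a factor distinct from $z_k$) and that every cross term $f_j z_j$ with $j\neq k$ dies there, which collapses the relation to $y z_k f_k \equiv 0$ in the polynomial ring $T/\mathfrak{a}_k \cong \K[z_k,z_{m+1},\ldots,z_n,y]$, whence $\bar f_k = 0$. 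This is more self-contained and elementary --- it never leaves the realm of explicit ideal membership and domains --- whereas the paper's approach is shorter on the page but implicitly asks for a separate Hilbert series computation. The first two parts of your argument (well-definedness via $z_i z_j \in I^{[2]}$ and surjectivity onto $(yI+I^{[2]})/I^{[2]}$) are the same as in the paper.
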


\begin{proof}
Note that the given map induces a well-defined graded homomorphism
\[
\alpha \colon \bigoplus_{j=1}^m (T/(z_1,\ldots,\hat{z}_j,\ldots,z_m) (-\deg (y z_j)) \to T/I^{[2]}
\]
because $z_j \cdot (z_1,\ldots,\hat{z}_j,\ldots,z_m) \subset I^{[2]}$. Its image is $(y I + I^{[2]})/I^{[2]}$. 
Thus, it remains to show that the map $\alpha$ is injective. To this end notice that an element is in the kernel of $\alpha$ if and only if it is in the kernel of the map 
\[
\beta \colon  \bigoplus_{j=1}^m (T/(z_1,\ldots,\hat{z}_j,\ldots,z_m) (-\deg (z_j)) \to T/I^{[2]}
\]
that is induced by $ (f_1,\ldots,f_m)^t \mapsto \left (\sum_{j=1}^m z_j f_j \right )\!\!\!\! \mod I^{[2]}$. 
However, $\beta$ is injective. This follows, for example, from the fact that the domain of $\beta$ and its image, that is, $I/I^{[2]}$ have the same Hilbert series. 
\end{proof}

We return to our standard notation and consider ideals in $P = P(d, \mb)$. Below we will abuse notation by using the same notation for an ideal in a ring $R$ and the ideal it generates in an extension ring of $R$. This should not cause confusion. We set $|\mb| = m_1 + \cdots + m_d$. Recall that $\mm_i$ is the ideal generated by $x_{i,1},\ldots,x_{i, m_i}$. 

\begin{proposition}
    \label{prop:key exact sequence} 
Consider the ideal $I = \mm_1^{[2]} + \cdots + \mm_d^{[2]} \subset P' = P (d, \mb + (1,\ldots,1))$. 
There is an exact sequence of $\Z^{d + |\mb|}$-graded $P'$-modules 
\begin{align}
    \label{eq:basic exact sequence}
0 \to F_d \to \cdots \to F_1 \to F_0 = P'/I \to P'/(I + \sum_{i=1}^d x_{i, m_i + 1} \mm_i), 
\end{align}
where 
\[
F_k = \bigoplus_{u \in [\Mon_{\cs}(P)]_k} (P'/(I : u)) (-\deg (u \prod_{i \in \cSupp (u)} x_{i, m_i+1})). 
\]
\end{proposition}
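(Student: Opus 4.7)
The plan is to realize $F_\bullet$ as the $\K$-linear tensor product of one two-term complex per color, and then appeal to the Künneth formula. Write $y_i := x_{i, m_i+1}$ and $A_i := \K[x_{i,1}, \ldots, x_{i, m_i}, y_i]$, so that $P' = A_1 \otimes_\K \cdots \otimes_\K A_d$ and $I$ extends to the sum $\sum_i \mm_i^{[2]} A_i$.

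For each color $i$ I would introduce the length-one complex of $\Z^{m_i+1}$-graded $A_i$-modules
\[
C^{(i)} \colon\quad 0 \longrightarrow \bigoplus_{j=1}^{m_i} \bigl(A_i/(x_{i,\ell} : \ell \in [m_i], \ell \neq j)\bigr)(-\deg(x_{i,j} y_i)) \xrightarrow{\partial^{(i)}} A_i/\mm_i^{[2]} \longrightarrow 0,
\]
where $\partial^{(i)}$ sends the generator of the $j$-th summand to $x_{i,j} y_i$. \Cref{lem:an iso} (applied with $T = A_i$, $z_j = x_{i,j}$, $y = y_i$) says precisely that $\partial^{(i)}$ is injective with image $(y_i \mm_i + \mm_i^{[2]})/\mm_i^{[2]}$. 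Consequently $C^{(i)}$ has vanishing homology in homological degree one, and $H_0(C^{(i)}) = A_i/(\mm_i^{[2]} + y_i \mm_i)$.

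The next step is to form the tensor product complex $C^{(1)} \otimes_\K \cdots \otimes_\K C^{(d)}$, which is naturally a complex of $\Z^{d + |\mb|}$-graded $P'$-modules. Its piece in homological degree $k$ decomposes into summands indexed by pairs $(S, (j_i)_{i \in S})$ with $S \subseteq [d]$ of size $k$ and $j_i \in [m_i]$ for each $i \in S$; equivalently, by color-squarefree monomials $u = \prod_{i \in S} x_{i, j_i}$ of degree $k$ in $P(d, \mb)$. A direct computation yields
\[
I : u = \sum_{i \notin \cSupp(u)} \mm_i^{[2]} + \sum_{i \in \cSupp(u)} (x_{i,\ell} : \ell \neq j_i),
\]
so the summand attached to $u$ matches $P'/(I : u)$ shifted by $-\deg\bigl(u \cdot \prod_{i \in \cSupp(u)} y_i\bigr)$, reproducing the definition of $F_k$. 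The tensor-product differential restricted to this summand is, up to Koszul signs, the sum over $i \in \cSupp(u)$ of multiplication by $x_{i, j_i} y_i$ into the summand indexed by $u/x_{i, j_i}$.

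To conclude, the Künneth formula over the field $\K$ gives
\[
H_\bullet(F_\bullet) = \bigotimes_{i=1}^d H_\bullet(C^{(i)}),
\]
which is concentrated in homological degree zero and equals $\bigotimes_i A_i/(\mm_i^{[2]} + y_i \mm_i) = P'/\bigl(I + \sum_i y_i \mm_i\bigr)$. Hence $F_\bullet$, augmented by the natural surjection onto $P'/\bigl(I + \sum_i x_{i, m_i+1} \mm_i\bigr)$, is exact, as claimed. The main point of effort is organizational: identifying the summands of the iterated tensor product with the color-squarefree indexing used for $F_k$ and verifying the colon-ideal formula above; once that bookkeeping is done, \Cref{lem:an iso} and Künneth do all of the homological work.
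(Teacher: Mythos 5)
Your proposal is correct and follows essentially the same route as the paper's proof: decomposing $P'$ as a $\K$-tensor product of one polynomial ring per color, extracting a two-term complex $C^{(i)}$ from \Cref{lem:an iso} for each color, tensoring over $\K$, and invoking the K\"unneth formula together with the colon-ideal identity $I : u = \sum_{i \notin \cSupp(u)} \mm_i^{[2]} + \sum_{i \in \cSupp(u)} (x_{i,\ell} : \ell \neq j_i)$ to identify the terms with $F_k$.
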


\begin{proof}
Consider $\mm_i = (x_{i,1},\ldots,x_{i,m_i})$ as an ideal of $R_i = \K[x_{i,1},\ldots,x_{i,m_i+1}]$. Note that $\mm_i^{[2]} : x_{i, j} = (x_{i,1},\ldots,\hat{x}_{i,j},\ldots,x_{i, m_i})$. Hence, for each $i \in [d]$ and $j \in [m_i]$, 
\Cref{lem:an iso} gives an exact sequence of $\Z^{m_i + 1}$-graded $R_i$-modules 
\[
0 \to \bigoplus_{j=1}^{m_i} (R_i/(\mm_i^{[2]} : x_{i, j})) (- \deg (x_{i, j} x_{i, m_i + 1})) \to 
R_i/\mm_i^{[2]} \to R_i/(x_{i, m_i + 1} \mm_i + \mm_i^{[2]}) \to 0. 
\]
Taking the tensor product over $\K$ of the $d$ acyclic complexes 
\[
0 \to \bigoplus_{j=1}^{m_i} (R_i/(\mm_i^{[2]} : x_{i, j})) (- \deg (x_{i, j} x_{i, m_i + 1})) \to 
R_i/\mm_i^{[2]} \to  0
\]
gives an acyclic complex by K\"unneth's formula. It yields the claimed exact sequence using isomorphisms of the form 
\begin{align*}
\hspace{10.5cm}&\hspace{-10.5cm}  
%(R_1/\mm_1^{[2]} : x_{1, j_1}) (- \deg (x_{1, j_1} x_{1, m_1 + 1})) \otimes_K \cdot \otimes_K 
%(R_k/\mm_k^{[2]} : x_{k, j_k}) (- \deg (x_{k, j_k} x_{k, m_k + 1})) \otimes_K 
%R_{k+1}/\mm_{k+1}^{[2]} \otimes_K \cdots \otimes_K R_d/\mm_d^{[2]} \cong 
%
(R_1/\mm_1^{[2]} : x_{1, j_1})  \otimes_\K \cdots \otimes_\K 
(R_k/\mm_k^{[2]} : x_{k, j_k})  \otimes_\K 
R_{k+1}/\mm_{k+1}^{[2]} \otimes_\K \cdots \otimes_\K R_d/\mm_d^{[2]} \\
& \cong P'/(I : x_{1, j_1} \cdots x_{k, j_k}) 
\end{align*}
because 
\[
I : x_{1, j_1} \cdots x_{k, j_k} = \sum_{i=1}^k (\mm_i^{[2]} : x_{i, j_i}) + \sum_{i=k+1}^d \mm_i^{[2]}
\]
as ideals of $P'$. 
\end{proof}

We are ready for the following key result. 

\begin{theorem}
      \label{thm:add polarized squares}
Let $I \subset P = P(d, \mb)$ be a color-squarefree monomial ideal. If 
\[
I' = I + \mm_1^{[2]} + \cdots + \mm_d^{[2]} = I + (x_{i, j} x_{i, k}~:~1 \le i \le d,\ 1 \le j < k \le m_i), 
\]
then one has: 
\begin{itemize}

\item[(a)] There is an exact sequence of of $\Z^{d + |\mb|}$-graded modules over $P' = P (d, \mb + (1,\ldots,1))$ 
\begin{align}
     \label{eq:exact seq}
0 \to G_d \to \cdots \to G_1 \to G_0 = P'/I' \to P'/(I' + \sum_{i=1}^d x_{i, m_i + 1} \mm_i), 
\end{align}
where 
\[
G_k = \bigoplus_{u \in [\Mon_{\cs}(P)]_k} (P'/(I' : u)) (-\deg (u \prod_{i \in \cSupp (u)} x_{i, m_i+1})). 
\]
and the maps are induced by the maps in the exact sequence \eqref{eq:basic exact sequence}. 

\item[(b)] $P'/(I' + \sum_{i=1}^d x_{i, m_i + 1} \mm_i)$ is minimally resolved over $P'$ by iterated mapping cones from \eqref{eq:exact seq}. 

\item[(c)] The $\Z^{d + |\mb|}$-graded Betti numbers of $P'/(I' + \sum_{i=1}^d x_{i, m_i + 1} \mm_i)$  are 
\begin{align*}
\hspace{1.5cm}&\hspace{-1.5cm}  
\beta_{k, a} (P'/(I' + \sum_{i=1}^d x_{i, m_i + 1} \mm_i))  \\
& = \sum_{j=0}^d \left ( \sum_{u \in [\Mon_{\cs}(P)]_k }\beta_{k-j, a - \deg (u \prod_{i \in \cSupp (u)} x_{i, m_i+1})} (P'/(I' : u)) \right ). 
\end{align*}
\end{itemize}
\end{theorem}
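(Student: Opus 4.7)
The overall strategy is to establish (a), which is the main work, and then derive (b) and (c) by standard mapping-cone and additivity arguments.

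For (a), my plan is to extend the K\"unneth-style proof of Proposition~\ref{prop:key exact sequence} by carrying the ideal $I$ along from the beginning. Concretely, I would apply Lemma~\ref{lem:an iso} iteratively, once per color: at step $i$, the lemma is applied to the color-$i$ variables $x_{i,1}, \ldots, x_{i, m_i+1}$, but with base ring already quotiented by $I$ together with the contributions of the colors processed so far. The colon identity $(I+J):u = (I:u) + (J:u)$, valid for any monomial ideals and any monomial $u$, is crucial: it ensures that the summands arising at each stage simplify precisely to the modules $P'/(I':u)$ that appear in $G_k$. Patching the $d$ resulting short exact sequences together via K\"unneth then yields the claimed complex, and the maps are the ones induced by those of \eqref{eq:basic exact sequence} because the construction is literally the ``$I$-enriched'' version of Proposition~\ref{prop:key exact sequence}.

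Given (a), Part (b) follows by iterated mapping cones. Starting from a minimal free $P'$-resolution of $G_0 = P'/I'$, one successively forms mapping cones with minimal free resolutions of the summands of $G_k$, appropriately shifted, for $k = 1, \ldots, d$; the resulting complex resolves $P'/(I' + \sum_i x_{i, m_i+1}\mm_i)$. It is minimal because the connecting differentials $G_k \to G_{k-1}$ in (a) are induced by multiplication by the degree-two monomials $x_{i, m_i+1} \cdot x_{i, j_i}$, which lie in the graded maximal ideal of $P'$; hence no unit entries appear in the comparison maps, so no cancellation occurs in forming the cones. Part (c) is then an immediate counting consequence of (b): minimal Betti numbers are additive under minimal mapping cones, and the internal-degree shifts $\deg(u \prod_{i \in \cSupp(u)} x_{i, m_i+1})$ in the formula are exactly those built into the definition of $G_k$.

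The hard part will be (a). The statement looks like a clean extension of Proposition~\ref{prop:key exact sequence}, but one cannot simply tensor that exact sequence with $P'/I$ over $P'$: a small example shows that the terms $F_k \otimes_{P'} P'/I$ typically differ from the desired $G_k$, because $(J:u) + I$ can be strictly smaller than $(J:u) + (I:u) = I':u$. Consequently the iterated K\"unneth construction must be redone from scratch with $I$ present, and the verification at each stage ultimately rests on Hilbert-series identities in appropriate monomial quotients. Color-squarefreeness of $I$ is essential here, since it distributes $I$'s generators ``at most one variable per color'' and keeps them compatible with the color-by-color assembly.
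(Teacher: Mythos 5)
Your strategy for part (a) --- redoing the K\"unneth construction with $I$ carried along --- differs from the paper's and leaves an unresolved gap. The paper keeps the $I$-free exact sequence \eqref{eq:basic exact sequence} of \Cref{prop:key exact sequence}, observes that it is $\Z^{d+|\mb|}$-graded and hence exact in each multidegree, and restricts it to multidegrees $\mu \notin I'$; a direct analysis of each summand $(P'/(\mm_1^{[2]}+\cdots+\mm_d^{[2]}):u)(-\deg(u\prod_{i\in\cSupp(u)}x_{i,m_i+1}))$ then shows the restricted complex is exactly the claimed $G_\bullet$. Your plan hits two obstacles you do not resolve. First, \Cref{lem:an iso} is proved for a polynomial ring $T$ and an ideal generated by variables, with injectivity deduced from a Hilbert-series count; once the base ring is ``quotiented by $I$ together with the contributions of earlier colors'' you are no longer in that situation, and you would need a strengthened lemma (allowing an arbitrary additional monomial ideal $J$) that you neither state nor prove. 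Second, the K\"unneth patching step is genuinely unavailable: the proof of \Cref{prop:key exact sequence} rests on $P'/(\mm_1^{[2]}+\cdots+\mm_d^{[2]})$ being a tensor product over $\K$ of one factor per color, and any ideal $I$ whose generators mix colors destroys this decomposition. Color-squarefreeness of $I$ does not restore the tensor structure. You correctly note that $F_k\otimes_{P'}P'/I$ is not $G_k$, but you have not supplied a replacement for the tensor argument; the paper's degree-restriction does exactly that.

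Your argument for part (b) is also not valid as written. That the maps $G_k\to G_{k-1}$ are ``multiplication by degree-two monomials in the graded maximal ideal'' does not imply that the lifted comparison maps between minimal free resolutions have no unit entries; this is a standard failure mode for mapping-cone constructions, and minimality must be argued directly. The paper's argument is a multidegree count: since each $I':u$ is generated by squarefree monomials in $P$, Taylor's resolution shows the multigraded Betti numbers of the summand $(P'/(I':u))(-\deg(u\prod_{i\in\cSupp(u)}x_{i,m_i+1}))$ occur only in degrees of squarefree monomials involving exactly $k$ of the variables $x_{1,m_1+1},\ldots,x_{d,m_d+1}$; different $k$ therefore contribute in different multidegrees, so no cancellation can occur in the iterated cones. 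Part (c) then follows from (b) by additivity, as you say, and that step agrees with the paper.
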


\begin{proof}
For every $\mu \in \N_0^{d+|\mb|}$, there is a unique monomial $v \in P'$ of degree $\mu$. Abusing notation we will sometimes identify $v$ with $\mu$. 

Since the  exact sequence \eqref{eq:basic exact sequence} is $\Z_0^{d+|\mb|}$-graded, it gives an 
exact sequence of $\K$-vector spaces in every degree $\mu \in \Z_0^{d+|\mb|}$. Decompose 
\eqref{eq:basic exact sequence} into exact sequences of graded vector spaces by setting 
$F_k = F_k' \oplus F_k''$, where $F_k'$ consists of the graded components in degrees $\mu \in \Z_0^{d+|\mb|}$ with $\mu \notin I'$, and $F_k''$  consists of all other graded components. 

Consider a direct summand of $F_k$, say $(P'/(\mm_1^{[2]} + \cdots + \mm_d^{[2]}) : u) (-\deg (u \prod_{i \in \cSupp (u)} x_{i, m_i+1}))$. This summand gives a non-trivial contribution to $F_k'$ in degree 
$\mu' \in \Z_0^{d+|\mb|}$ if and only if $\mu' = \mu u  \prod_{i \in \cSupp (u)} x_{i, m_i+1} \notin I'$, where $\mu$ is a monomial with 
$[P'/(\mm_1^{[2]} + \cdots + \mm_d^{[2]}) : u]_{\mu} \neq 0$. The latter is equivalent to $\mu \notin I' :  u  \prod_{i \in \cSupp (u)} x_{i, m_i+1} = I' : u$ and $\mu \notin (\mm_1^{[2]} + \cdots + \mm_d^{[2]}) : u$. 
Since $ (\mm_1^{[2]} + \cdots + \mm_d^{[2]}) : u$ is contained in $I' : u$, the last two conditions are equivalent to $[P'/I' :u]_{\mu} \neq 0$. Hence, for every $k \in [d]$, there is an isomorphism of graded vector spaces 
\[
F_k' \to \bigoplus_{u \in [\Mon_{\cs}(P)]_k} (P'/I' : u) (-\deg (u \prod_{i \in \cSupp (u)} x_{i, m_i+1})), 
\]
which is induced by 
\[
\mu u  \prod_{i \in \cSupp (u)} x_{i, m_i+1}\!\!\!\!   \mod ((\mm_1^{[2]} + \cdots + \mm_d^{[2]}) : u) \mapsto 
\mu u  \prod_{i \in \cSupp (u)} x_{i, m_i+1}\!\!\!\!   \mod (I' : u). 
\]
This proves Claim (a). 

For establishing Part (b), observe that, for every monomial $u \in [\Mon_{\cs}(P)]_k$, the ideal $I : u$ 
is generated by color-squarefree monomials that all are in $P$. Therefore, Taylor's resolution implies that 
the $\Z^{d+|\mb|}$-graded Betti numbers of $P'/I' : u$ occur in degrees of squarefree monomials in $P$. 
The direct summands of $G_k$ are of the form $(P'/I' : u) (-\deg (u \prod_{i \in \cSupp (u)} x_{i, m_i+1}))$. It follows that the graded Betti numbers of $G_k$ are supported in degrees of squarefree monomials that are a product of a monomial in $P$ and precisely $k$ of the variables $x_{1, m_1 + 1},\ldots,x_{d, m_d + 1}$. The product of these $k$ variables identifies the Betti number as a contribution of $G_k$. Hence, there can be no cancellations in the mapping cones. 

Claim (c) is a consequence of (b). 
\end{proof}

In the above proof we used arguments for the proof of Theorem 2.1 in \cite{MPS}. In fact, the latter result or a suitable modification of the above proof give the following statement. 

\begin{proposition}
      \label{prop:add  squares}
Let $I \subset P = P(d, \mb)$ be a color-squarefree monomial ideal. If  
\[
I' = I + \mm_1^{[2]} + \cdots + \mm_d^{[2]}, 
\]
then one has: 
\begin{itemize}

\item[(a)] There is an exact sequence of of $\Z^{|\mb|}$-graded $P$-modules 
\begin{align}
     \label{eq:exact seq w squares}
0 \to F_d \to \cdots \to F_1 \to F_0 = P/I' \to P/(I + \sum_{i=1}^d \mm_i^2), 
\end{align}
where 
\[
F_k = \bigoplus_{u \in [\Mon_{\cs}(P)]_k} (P/(I' : u)) (-2 \deg (u)). 
\]
and the maps are induced by the Koszul maps for the regular sequence of $|\mb|$ squares  $x_{1,1}^2, x_{1,2}^2,\ldots,x_{d, m_d}^2$. 

\item[(b)] $P/(I + \sum_{i=1}^d \mm_i^2)$ is minimally resolved over $P$ by iterated mapping cones from \eqref{eq:exact seq w squares}. 

\item[(c)] The $\Z^{|\mb|}$-graded Betti numbers of $P/(I + \sum_{i=1}^d \mm_i^2)$  are  
\begin{align*}
\hspace{1.5cm}&\hspace{-1.5cm}  
\beta_{k, a} (P'/(I' + \sum_{i=1}^d x_{i, m_i + 1} \mm_i))  \\
& = \sum_{j=0}^d \left ( \sum_{u \in [\Mon_{\cs}(P)]_k }\beta_{k-j, a -2  \deg (u )} (P/I' : u) \right ). 
\end{align*}
\end{itemize}
\end{proposition}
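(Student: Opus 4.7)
The plan is to adapt the proof of \Cref{thm:add polarized squares}, working inside $P$ and replacing the polarized squares $x_{i, j} \cdot x_{i, m_i+1}$ by the intrinsic squares $x_{i, j}^2$. The first step is an in-$P$ analog of \Cref{prop:key exact sequence}. For each color $i$, I establish the short exact sequence of $\Z^{m_i}$-graded $R_i$-modules
\[
0 \to \bigoplus_{j=1}^{m_i} \bigl(R_i/(\mm_i^{[2]} : x_{i,j})\bigr)(-2 \deg x_{i,j}) \to R_i/\mm_i^{[2]} \to R_i/\mm_i^2 \to 0,
\]
where $R_i = \K[x_{i,1},\ldots,x_{i, m_i}]$ and the leftmost map sends the generator of the $j$-th summand to the class of $x_{i,j}^2$. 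Using $\mm_i^{[2]} : x_{i,j} = (x_{i,1},\ldots,\hat{x}_{i,j},\ldots,x_{i, m_i})$, injectivity is immediate on $\K$-bases: each summand is $\K[x_{i,j}](-2\deg x_{i,j})$ with image consisting of the pure powers $x_{i,j}^r$ for $r \ge 2$, which are $\K$-linearly independent in $R_i/\mm_i^{[2]}$. Tensoring the $d$ resulting acyclic two-term complexes over $\K$ and applying K\"unneth's formula yields an exact sequence
\[
0 \to H_d \to \cdots \to H_0 = P/J \to P/(\mm_1^2 + \cdots + \mm_d^2) \to 0,
\]
with $J = \mm_1^{[2]} + \cdots + \mm_d^{[2]}$ and $H_k = \bigoplus_{u \in [\Mon_{\cs}(P)]_k} (P/(J : u))(-2 \deg u)$; here one uses $J : u = \sum_{i \in \cSupp u}(\mm_i^{[2]} : x_{i, j_i}) + \sum_{i \notin \cSupp u} \mm_i^{[2]}$ for $u = \prod x_{i, j_i}$. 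This proves (a) for $I = 0$.

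For general $I$, part (a) follows by a multidegree-by-multidegree argument, in the spirit of the proof of \Cref{thm:add polarized squares}(a). The central claim, which is the main obstacle, is a divisor-closure lemma: if $u$ is color-squarefree, $u^2$ divides the monomial $m_\mu$ of multidegree $\mu$, and $I$ is color-squarefree, then $m_\mu \in I$ if and only if $m_\mu/u \in I$. One direction is trivial; for the other, any color-squarefree minimal generator $w \in G(I)$ dividing $m_\mu$ satisfies $\exp_x(w) \le 1$, so $w$ also divides $m_\mu/u$ because $\exp_x(m_\mu/u) \ge 1$ for $x \in \supp u$ (using $u^2 \mid m_\mu$) and $\exp_x(m_\mu/u) = \exp_x(m_\mu)$ for $x \notin \supp u$. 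The $u$-summand of $H_k$ contributes to $[H_k]_\mu$ iff $u^2 \mid m_\mu$ and $m_\mu/u \notin J$, whereas the $u$-summand of $F_k$ contributes iff additionally $m_\mu/u \notin I$. By the lemma this gives $[F_k]_\mu = [H_k]_\mu$ when $m_\mu \notin I$ and $[F_k]_\mu = 0$ when $m_\mu \in I$. Since $[P/(I + \mm_1^2 + \cdots + \mm_d^2)]_\mu$ likewise vanishes for $m_\mu \in I$ and agrees with $[P/(\mm_1^2 + \cdots + \mm_d^2)]_\mu$ for $m_\mu \notin I$, the exact sequence of (a) follows from the exactness of $H_\bullet$ in each multidegree.

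Parts (b) and (c) then follow as in \Cref{thm:add polarized squares}. Since $I$ is color-squarefree, for each color-squarefree $u$ the ideal $I' : u$ has generators supported outside $\supp u$: the generators of $I : u$ are obtained from $G(I)$ by deleting the divisors in $u$, the generators of $J:u$ coming from $(\mm_i^{[2]} : x_{i, j_i})$ for $i \in \cSupp u$ are variables $x_{i, \ell}$ with $\ell \neq j_i$, and the remaining ones are quadratic monomials $x_{i,k} x_{i,\ell}$ with $i \notin \cSupp u$. By the Taylor resolution, the multigraded Betti numbers of $P/(I':u)$ are supported in squarefree multidegrees whose support avoids $\supp u$; twisting by $-2\deg u$, the Betti contributions of the $u$-summand of $F_k$ land in multidegrees whose ``exponent-$2$ support'' equals $\supp u$ exactly and whose remaining coordinates lie in $\{0, 1\}$. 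Such a multidegree determines $u$ uniquely, so Betti contributions from distinct summands of $F_\bullet$ occupy pairwise distinct multidegrees, precluding cancellations in the iterated mapping cones. This proves minimality in (b), and (c) is its numerical shadow.
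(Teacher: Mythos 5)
Your proof is correct, and it executes the second route that the paper only sketches (the ``Alternatively, one can employ the arguments in the proof of \Cref{thm:add polarized squares}...'' remark). The paper's primary route simply cites \cite[Theorem 2.1]{MPS} together with the observation that $I':u = P$ whenever $u$ is squarefree but not color-squarefree, which truncates the length-$|\mb|$ Koszul-type complex to length $d$; you instead build the length-$d$ sequence $H_\bullet$ for the $I=0$ case directly, as a K\"unneth product of per-color short exact sequences. This mirrors the construction of \Cref{prop:key exact sequence} with the intrinsic squares $x_{i,j}^2$ replacing the polarized products $x_{i,j}x_{i,m_i+1}$, and avoids having to argue that the non-color-squarefree terms vanish. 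The divisor-closure lemma you isolate is the correct adaptation of the identity $I' : (u\prod_{i\in\cSupp u} x_{i,m_i+1}) = I':u$ that is used implicitly in the proof of \Cref{thm:add polarized squares}; in your setting it amounts to $I':u^2 = I':u$ for color-squarefree $u$, and your verification of it is sound. Your no-cancellation argument via the exponent-$2$ support of the Betti multidegree is likewise the right analog of the paper's bookkeeping via the auxiliary variables $x_{i,m_i+1}$, since the exponent-$2$ support recovers $u$ and hence the homological level $k = \deg u$. In short: same approach as the paper's indicated alternative, with all the details filled in and a somewhat cleaner base-case construction.
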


\begin{proof}
This follows by \cite[Theorem 2.1]{MPS} and observing that $I' : u = P$ whenever $u \in P$ is a squarefree monomial that is not color-squarefee. In particular, any squarefree monomial $u$ whose total degree is at least $d+1$ is not color-squarefree. 

Alternatively, one can employ the arguments in the proof of \Cref{thm:add polarized squares}. Replace the use of the exact sequence in \Cref{prop:key exact sequence}  by using the Koszul complex on the regular sequence $x_{1,1}^2, x_{1,2}^2,\ldots,x_{d, m_d}^2$. 
\end{proof}

The announced result about comparing Betti numbers follows now quickly. 

\begin{theorem}
     \label{thm:compare Betti numbers}
If $I \subset P = P(d, \mb)$ is a color-squarefree monomial ideal, then the ideals 
\[ 
I + \mm_1^{[2]} + \cdots + \mm_d^{[2]} + \sum_{i=1}^d x_{i, m_i + 1} \mm_i \quad \text{ and } \quad  
I + \mm_1^2 + \cdots + \mm_d^2
\]
of $P' = P(d, \mb + (1,\ldots,1))$  and of $P$, respectively, have the same $\Z^d$-graded Betti numbers. 
\end{theorem}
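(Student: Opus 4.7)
The plan is to apply the two mapping-cone formulas \Cref{thm:add polarized squares}(c) and \Cref{prop:add squares}(c): the former expresses the $\Z^{d+|\mb|}$-graded Betti numbers of $P'/(I + \sum \mm_i^{[2]} + \sum x_{i, m_i+1} \mm_i)$ as a sum of Betti numbers of colon modules $P'/((I + \sum \mm_i^{[2]}) : u)$, while the latter expresses the $\Z^{|\mb|}$-graded Betti numbers of $P/(I + \sum \mm_i^2)$ as a sum of Betti numbers of the colon modules $P/((I + \sum \mm_i^{[2]}) : u)$. In both cases the sums are indexed by color-squarefree monomials $u \in P$. I will show that coarsening both formulas to the $\Z^d$-grading given by $\deg x_{i,j} = \eb_i$ makes the two right-hand sides termwise identical.

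First I would match the multigraded shifts. For a color-squarefree monomial $u \in P$ with $\cSupp(u) = S$, each factor of $\prod_{i \in S} x_{i, m_i + 1}$ carries $\Z^d$-degree $\eb_i$, so
\[
\deg_{\Z^d}\!\Bigl( u \prod_{i \in \cSupp (u)} x_{i, m_i + 1} \Bigr) \; = \; 2 \sum_{i \in S} \eb_i \; = \; 2 \deg_{\Z^d}(u).
\]
Thus the shift $a - \deg(u \prod_{i \in \cSupp(u)} x_{i, m_i + 1})$ from \Cref{thm:add polarized squares}(c) collapses, after projection onto $\Z^d$, to exactly the shift $a - 2 \deg_{\Z^d}(u)$ appearing in \Cref{prop:add squares}(c).

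Next I would identify the colon modules after passage to $\Z^d$. Set $J := I + \mm_1^{[2]} + \cdots + \mm_d^{[2]} \subset P$. Since $J$ and $u$ both live in $P$ and $P' = P[x_{1, m_1 + 1},\ldots,x_{d, m_d + 1}]$ is a polynomial extension of $P$, one has $(J P') : u = (J : u)\, P'$. Tensoring a minimal $\Z^d$-graded $P$-free resolution of $P/(J:u)$ with $P'$ produces a minimal $\Z^d$-graded $P'$-free resolution of $P'/((JP'):u)$, so
\[
\beta_{i, a}^{P'}\bigl( P' /((JP') : u)\bigr) \; = \; \beta_{i, a}^{P}\bigl( P /(J : u)\bigr)
\]
as $\Z^d$-graded Betti numbers, for every $i \ge 0$ and every $a \in \Z^d$. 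Combined with the shift identification above, this makes the two $\Z^d$-coarsened formulas identical term by term, which yields the claim.

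The only delicate point is the assertion that polynomial extension preserves $\Z^d$-graded Betti numbers; this is a standard flat-base-change argument, and minimality of the extended resolution is inherited from the fact that $P$ and $P'$ share the residue field $\K$. Beyond this verification, the argument is essentially a matter of aligning indices in the two mapping-cone formulas.
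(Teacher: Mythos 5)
Your proposal is correct and follows exactly the paper's intended argument: compare the mapping-cone formulas of Parts (c) of \Cref{thm:add polarized squares} and \Cref{prop:add squares}, note that $u^2$ and $u\prod_{i\in\cSupp(u)}x_{i,m_i+1}$ have the same $\Z^d$-degree, and observe that the colon ideal $(I':u)$ has the same $\Z^d$-graded Betti numbers over $P$ as its extension does over $P'$ (flat base change by the new variables). Your write-up supplies somewhat more detail than the paper's terse proof, but there is no material difference in approach.
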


\begin{proof}
For every  color-squarefree monomial $u \in P$ of total degree $k$, the ideal $I : u$ of $P$ has the same graded Betti numbers as its extension ideal in $P'$. Hence the result follows by comparing Parts (c) of \Cref{thm:add polarized squares} and \Cref{prop:key exact sequence} and observing that the monomials $u^2$ and $u \prod_{i \in \cSupp (u)} x_{i, m_i+1}$ have the same $\Z^d$-degree. 
\end{proof}

As a consequence of the last result we obtain the following statement for any balanced squeezed complex. 

\begin{corollary}
    \label{cor:compare Betti of balanced squeezed} 
Let $U \subset P(d, \mb)$ be a color-squarefree shifted order ideal and consider the associated balanced squeezed complex $\Delta_{\bal}(U)$. Its Stanley-Reisner ideal $I_{\Delta_{\bal}(U)}$ in $P(d, \mb + (1,\ldots,1))$ and its multigraded generic initial ideal have the same $\Z^d$-graded Betti numbers. 
\end{corollary}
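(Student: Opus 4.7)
The plan is to reduce this statement directly to \Cref{thm:compare Betti numbers} by making both ideals involved fit the template of that theorem. First I would invoke \Cref{thm:gin}: since $U$ is color-squarefree shifted, the multigraded generic initial ideal equals $I(U) \cdot P'$, where $P' = P(d, \mb + (1,\ldots,1))$. Because $U$ is color-squarefree, $\mm_1^2 + \cdots + \mm_d^2$ is contained in $I(U)$, so if I let $J \subset P = P(d, \mb)$ be the ideal generated by the color-squarefree minimal generators of $I(U)$, then $I(U) = J + \mm_1^2 + \cdots + \mm_d^2$ inside $P$.

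Next I would use \Cref{thm:Stanley-Reisner ideal} to rewrite the Stanley-Reisner ideal. The squarefree monomials in $I(U)$ split into the color-squarefree generators of $J$ together with the products $x_{i,j} x_{i,k}$ with $j < k$ and $j, k \in [m_i]$, which collectively generate $\mm_i^{[2]}$. The remaining generators $x_{i, \ell} x_{i, m_i + 1}$ assemble to the ideal $\sum_{i=1}^d x_{i, m_i + 1} \mm_i$. Hence
\[
I_{\Delta_{\bal}(U)} \;=\; J + \mm_1^{[2]} + \cdots + \mm_d^{[2]} + \sum_{i=1}^d x_{i, m_i + 1} \mm_i,
\]
which is exactly the $P'$-ideal appearing on the left side of \Cref{thm:compare Betti numbers} for the color-squarefree ideal $I := J$.

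Applying \Cref{thm:compare Betti numbers} with this choice of $I = J$ shows that $I_{\Delta_{\bal}(U)}$ has the same $\Z^d$-graded Betti numbers over $P'$ as the ideal $J + \mm_1^2 + \cdots + \mm_d^2 = I(U)$ has over $P$. It remains only to observe that extension of scalars along the flat inclusion $P \hookrightarrow P' = P[x_{1, m_1 + 1}, \ldots, x_{d, m_d + 1}]$ preserves $\Z^d$-graded Betti numbers, so the $\Z^d$-graded Betti numbers of $I(U)$ over $P$ coincide with those of $I(U) \cdot P' = \gin_{\prec}(I_{\Delta_{\bal}(U)})$ over $P'$. Chaining these equalities gives the corollary.

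I do not anticipate a serious obstacle: the three ingredients (the explicit description of $I_{\Delta_{\bal}(U)}$, the identification of the gin, and the Betti number comparison in \Cref{thm:compare Betti numbers}) align exactly. The only bookkeeping to be careful about is verifying that the squarefree part of $I(U) \subset P$ really reorganizes as $J + \mm_1^{[2]} + \cdots + \mm_d^{[2]}$ so that the template of \Cref{thm:compare Betti numbers} is matched, and that the $\Z^d$-grading (in which all variables of color $i$ have the same degree $\eb_i$) is preserved by each identification, including the passage from $\mm_i^2$ in $P$ to $\mm_i^{[2]} + x_{i, m_i + 1} \mm_i$ in $P'$.
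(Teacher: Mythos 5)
Your argument is correct and matches the paper's proof essentially line for line: decompose $I(U)$ via \Cref{lem:order ideal vs ideal}, match $I_{\Delta_{\bal}(U)}$ to the template of \Cref{thm:compare Betti numbers} via \Cref{thm:Stanley-Reisner ideal}, identify the gin via \Cref{thm:gin}, and conclude. You are slightly more explicit than the paper about two bookkeeping steps — that the squarefree part of $I(U)$ reorganizes as $J + \mm_1^{[2]} + \cdots + \mm_d^{[2]}$, and that flat base change along $P \hookrightarrow P'$ preserves $\Z^d$-graded Betti numbers — but the route is the same.
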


\begin{proof}
By \Cref{lem:order ideal vs ideal}, the ideal $I(U)$ of $P = P(d, \mb)$ can be written as 
\[
I(U) = J + \mm_1^2 + \cdots + \mm_d^2, 
\]
where $J$ is a color-squarefree monomial ideal. 

According to \Cref{thm:Stanley-Reisner ideal}, the Stanley-Reisner ideal of $\Delta_{\bal}(U)$ is an ideal of $P' = P(d, \mb + (1,\ldots,1))$, namely 
\[
I_{\Delta_{\bal}(U)} = J +  \mm_1^{[2]} + \cdots + \mm_d^{[2]} + \sum_{i=1}^d x_{i, m_i + 1} \mm_i.  
\]
Since, by \Cref{thm:gin}, one has 
\[
\gin_\prec(I_{\Delta_{\bal}(U)}) = I(U) \cdot P'
\]
we conclude using \Cref{thm:compare Betti numbers}. 
\end{proof}

\begin{example}
We consider once more the order ideal from \Cref{RunningExample}. The $\Z$-graded Betti numbers of both, $I_{\Delta_\bal(U)}$ and $\gin_\prec(I_{\Delta_\bal(U)})$,  are given by 
\begin{table}[h!]
    \[\begin{array}{rccccccc} 
			& 0 &1 & 2&3 &4&5&6  \\
			\text{total:} & 1 & 18 & 56 & 79 & 60 & 24 & 4\\%\hline
       0: &1 &\cdot & \cdot &\cdot &\cdot &\cdot &\cdot \\ %\hline
			1: &\cdot &18 & 53& 69 &48 &18 &3 \\ %\hline
			2: &\cdot &\cdot & 3 &9 &9 &3 &\cdot \\ %\hline
			3: &\cdot &\cdot & \cdot &1 &3 &3 &1 \\ %\hline
    \end{array}
\]
\end{table}
\end{example}
Finally, we relate our construction to algebraic color-shifting as introduced by Babson and Novik in \cite{Babson:Novik}. Let $\Gamma$ be a balanced simplicial complex. Pass to the generic initial ideal $I$ of the Stanley-Reisner ideal of $\Gamma$. Applying then a colored ``polarization'' map to $I$ gives a squarefree monomial ideal in a suitable polynomial ring, which is, by definition, the Stanley-Reisner ideal of the  complex $\widetilde{\Gamma}$ obtained by color-shifting from $\Gamma$ (see \cite{Babson:Novik} for details). 

\begin{example}
Consider the balanced squeezed complex $\Delta_{\bal}(U)$ of our running example (see \Cref{RunningExample}). The multigraded generic initial ideal of its Stanley-Reisner ideal is described in \Cref{exa:gin of running}. Applying the colored polarization map to its minimal generators, one computes that the Stanley-Reisner ideal of the complex $\widetilde{\Delta_{\bal}(U)}$ obtained by color-shifting is generated by the color-squarefree minimal generators of $I(U)$, that is, by 
\begin{align*}
& x_{1,1}x_{2,1},\quad x_{1,1}x_{2,2},\quad x_{1,1}x_{3,1},\quad x_{1,1}x_{3,2},\\ 
&  x_{1,2}x_{2,1},\quad x_{1,2}x_{3,1},\quad x_{2,1}x_{3,1},\quad x_{2,1}x_{3,2},\quad x_{2,2}x_{3,1}, 
\end{align*}
by $x_{1,1}x_{1,3},\ x_{2,1}x_{2,3},\ x_{3,1}x_{3,3}$ and by the monomials $x_{i,j} x_{i,j+1}$ with $1\leq i\leq 3$ and $1\leq j\leq 2$. 
\end{example}

\begin{theorem}
     \label{thm:compare with shifted} 
Let $U \subset P(d, \mb)$ be a color-squarefree  shifted order ideal. Then the balanced squeezed complex $\Delta_{\bal}(U)$ and the complex $\tilde{\Delta}$ obtained by color-shifting it have the same $\Z$-graded Betti numbers. 
\end{theorem}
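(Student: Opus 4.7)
The plan is to combine \Cref{cor:compare Betti of balanced squeezed} with the Betti-number preservation of colored polarization. First I would invoke \Cref{cor:compare Betti of balanced squeezed}, which asserts that $I_{\Delta_{\bal}(U)}$ and $\gin_\prec(I_{\Delta_{\bal}(U)})$ share the same $\Z^d$-graded Betti numbers, and therefore the same $\Z$-graded Betti numbers. By the definition of color-shifting due to Babson and Novik, $I_{\tilde\Delta}$ is obtained from $\gin_\prec(I_{\Delta_{\bal}(U)})$ by applying the colored polarization map. Since polarization preserves $\Z$-graded Betti numbers (the extra variables form a regular sequence on the polarized quotient, exactly as in the classical argument), $I_{\tilde\Delta}$ and $\gin_\prec(I_{\Delta_{\bal}(U)})$ share the same $\Z$-graded Betti numbers, and the theorem follows.

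To make the argument fully concrete and self-contained, one may verify directly that, in our setting, the colored polarization of $\gin_\prec(I_{\Delta_{\bal}(U)}) = I(U) \cdot P'$ (using \Cref{thm:gin}, where $P' = P(d, \mb + (1,\ldots,1))$) yields $I_{\Delta_{\bal}(U)}$ itself. By \Cref{lem:order ideal vs ideal}, one writes $I(U) = J + \mm_1^2 + \cdots + \mm_d^2$ for a color-squarefree monomial ideal $J$. The colored polarization replaces each square $x_{i,j}^2$ with $x_{i,j}\,x_{i,m_i+1}$, using the one extra color-$i$ variable already provided by $P'$, while leaving the squarefree generators of $J$ and of $\mm_i^{[2]}$ untouched. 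The result is the ideal $J + \mm_1^{[2]} + \cdots + \mm_d^{[2]} + \sum_{i=1}^{d} x_{i,m_i+1}\mm_i$, which by \Cref{thm:Stanley-Reisner ideal} coincides with $I_{\Delta_{\bal}(U)}$. Thus $I_{\tilde\Delta} = I_{\Delta_{\bal}(U)}$ as ideals, making the equality of Betti numbers immediate.

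The main obstacle, such as it is, consists in pinning down the precise action of the Babson--Novik colored polarization on a strongly color-stable ideal of the form $I(U) \cdot P'$, and in particular verifying that it requires no additional variables beyond the single $x_{i,m_i+1}$ per color already available in $P'$. This is guaranteed by the very structure of $I(U)$: its only non-squarefree minimal generators are the variable squares $x_{i,j}^2$, so one extra variable per color class suffices to polarize them all at once. With this bookkeeping in place, the theorem drops out as an immediate consequence of the Stanley--Reisner description in \Cref{thm:Stanley-Reisner ideal}, the gin computation in \Cref{thm:gin}, and the Betti-number comparison in \Cref{cor:compare Betti of balanced squeezed}.
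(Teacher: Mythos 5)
Your first paragraph follows the same route as the paper's proof, but the parenthetical justification for the second step is wrong. The Babson--Novik ``colored polarization'' is not the classical polarization, and the extra variables do \emph{not} form a regular sequence on the quotient; the map is the colored Kalai squarefree shift applied within each color, $x_{i,j_1}\cdots x_{i,j_s}\mapsto x_{i,j_1}x_{i,j_2+1}\cdots x_{i,j_s+s-1}$ for $j_1\le\cdots\le j_s$. The statement that this operation preserves $\Z$-graded Betti numbers is not elementary: it is precisely the content of Murai's Theorem~0.1 in \cite{Murai-ColorStable}, which requires the ideal to be strongly color-stable (guaranteed here because $U$ is color-squarefree shifted). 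This is exactly the ingredient the paper invokes; without it your first paragraph has a genuine gap.

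Your second, concrete argument is a different and in fact stronger route, but the description of the polarization is incorrect: the map does not fix the squarefree same-color generators $x_{i,j}x_{i,k}$ ($j<k$) nor send $x_{i,j}^2\mapsto x_{i,j}x_{i,m_i+1}$; rather $x_{i,j}x_{i,k}\mapsto x_{i,j}x_{i,k+1}$ for all $j\le k$. That said, your conclusion $I_{\tilde\Delta}=I_{\Delta_\bal(U)}$ survives this error, because the only non-color-squarefree minimal generators of $I(U)$ have degree two, and both your map and the correct one send $\{x_{i,j}x_{i,k}:1\le j\le k\le m_i\}$ bijectively onto $\{x_{i,a}x_{i,b}:1\le a<b\le m_i+1\}$; combined with \Cref{thm:gin} and \Cref{thm:Stanley-Reisner ideal} this indeed gives $I_{\tilde\Delta}=I_{\Delta_\bal(U)}$. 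If you argue this way, you should state the correct polarization formula and then reason at the level of the resulting generating sets (not monomial-by-monomial), and you may note that this makes \Cref{cor:compare Betti of balanced squeezed} and Murai's theorem unnecessary for this particular statement, since the two complexes coincide. The paper instead proves the Betti equality via the two-step comparison $\beta(I_{\Delta_\bal(U)})=\beta(\gin_\prec(I_{\Delta_\bal(U)}))=\beta(I_{\tilde\Delta})$, the second equality being Murai's theorem; your direct identification of the two ideals, once the polarization formula is fixed, is a cleaner route.
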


\begin{proof}
By \Cref{cor:compare Betti of balanced squeezed}, the ideals $I_{\Delta_{\bal}(U)}$ and 
$\gin_\prec(I_{\Delta_{\bal}(U)})$ have the same $\Z^d$- and thus the same $\Z$-graded Betti numbers. Since $U$ is color-squarefree shifted by assumption, the ideal $I(U)$ and so 
$\gin_\prec(I_{\Delta_{\bal}(U)})$ are strongly color-stable monomial ideals. Thus the main result, Theorem 0.1,  in \cite{Murai-ColorStable} gives that $\gin_\prec(I_{\Delta_{\bal}(U)})$ and the Stanley-Reisner ideal of $\tilde{\Delta}$ have the same $\Z$-graded Betti numbers. 
\end{proof}

%\begin{example}
%The color-shifted complex of the balanced squeezed complex from \Cref{RunningExample} has the following set of facets
%\begin{align*}

%\end{align*}
%\end{example}
%\todo[inline]{Uwe: remains TO DO: add running example, incorporate references below} 

%%%%%%%%%%%%%%%%%%%%%%

\bibliographystyle{alpha}
\bibliography{biblio}

\end{document}